\newtheorem{theorem}{Theorem}
\newtheorem{assumption}{Assumption}
\newtheorem{definition}{Definition}
\newtheorem{lemma}{Lemma}
\newtheorem{remark}{Remark}
\newtheorem{example}{Example}
\numberwithin{equation}{section}
\title{Solving PDEs on Spheres with Physics-Informed Convolutional Neural Networks$^\dag$\footnotetext{\dag~The work described in this paper is supported partially by Shanghai Science and Technology Program (Project No. 21JC1400600) and  NSFC/RGC Joint Research Scheme (Project No. 12061160462 and N\_CityU102/20). The work of Zhen Lei is also supported by the New Cornerstone Science Foundation through the XPLORER PRIZE and Sino-German Center Mobility Programme (Project No. M-0548). The work of Lei Shi is also supported by the National Natural Science Foundation of China (Grant No.12171039). The work of Ding-Xuan Zhou is partially supported by the Australian Research Council under project DP240101919. The corresponding author is Lei Shi.}}
\author[1]{Guanhang Lei}
\author[1]{Zhen Lei}
\author[1]{Lei Shi}
\author[1]{Chenyu Zeng}
\author[2]{Ding-Xuan Zhou}
\affil[1]{School of Mathematical Sciences, \linebreak
Shanghai Key Laboratory for Contemporary Applied Mathematics, \linebreak
Fudan University, Shanghai, 200433, P. R. China \linebreak
Email:ghlei21@m.fudan.edu.cn \linebreak
\{zlei, leishi, cyzeng19\}@fudan.edu.cn}
\affil[2]{School of Mathematics and Statistics, \linebreak
University of Sydney, \linebreak
Sydney NSW 2006, Australia \linebreak
Email:dingxuan.zhou@sydney.edu.au}
\date{}
\begin{document}
	\maketitle
\begin{abstract}
	Physics-informed neural networks (PINNs) have been demonstrated to be efficient in solving partial differential equations (PDEs) from a variety of experimental perspectives. Some recent studies have also proposed PINN algorithms for PDEs on surfaces, including spheres. However, theoretical understanding of the numerical performance of PINNs, especially PINNs on surfaces or manifolds, is still lacking. In this paper, we establish rigorous analysis of the physics-informed convolutional neural network (PICNN) for solving PDEs on the sphere. By using and improving the latest approximation results of deep convolutional neural networks and spherical harmonic analysis, we prove an upper bound for the approximation error with respect to the Sobolev norm. Subsequently, we integrate this with innovative localization complexity analysis to establish fast convergence rates for PICNN. Our theoretical results are also confirmed and supplemented by our experiments. In light of these findings, we explore potential strategies for circumventing the curse of dimensionality that arises when solving high-dimensional PDEs.
\end{abstract}
	
{\textbf{Keywords and phrases:} Physics-Informed Neural Network; Convolutional Neural Network; Solving PDEs on spheres; Convergence analysis; Curse of dimensionality.}

\section{Introduction}\label{Section: Introduction}

Solving partial differential equations (PDEs) is crucial in many science and engineering problems. Numerous methods, including finite differences, finite elements, and some meshless schemes, have been well-developed, particularly for low-dimensional PDEs. Nonetheless, these classical strategies often become impractical and time-consuming for high-dimensional PDEs attributed to their computational inefficiencies. The amalgamation of deep learning methodologies and data-driven architectures has recently exhibited superiority across various fields. There have been extensive studies on solving high-dimensional PDEs with deep neural networks, such as the Deep Ritz methods (DRMs) \cite{E2018Deep}, Physics-Informed Neural Networks (PINNs) \cite{Sirignano2018DGM, Raissi2019Physicsinformed}, Neural Operators \cite{Li2020Neural, Li2021Fourier, Kovachki2023Neural}, and DeepONets \cite{Lu2021Learning}. DRMs and PINNs utilize the powerful approximation capabilities of neural networks to directly learn solutions of PDEs. In contrast, Neural Operators and DeepONets specialize in learning the operators that map initial or boundary conditions to solution functions. One can refer to \cite{Karniadakis2021Physicsinformed} for an exhaustive review of deep learning techniques for resolving PDEs.

In this paper, we focus on the PINN approach. While previous research (e.g., \cite{Lu2022Machine, Jiao2022rate}) has performed convergence analysis on the generalization bound of PINNs, it primarily pertains to fully connected neural networks learning solutions to PDEs on a physical domain of a Euclidean space. However, there is still a gap in employing convolutional neural networks (CNNs) for this purpose. In addition, solving PDE systems on manifolds holds considerable relevance for practical applications, situating manifold learning as a vibrant domain within machine learning. The low intrinsic data dimensions have been validated as effective in shielding learning algorithms from the curse of dimensionality \cite{Ye2008Learning, Ye2009SVM, Fu2011Manifold, Yang2016Bayesian, Hamm2021Adaptive, Lei2024Pairwise}. Although there has been some exploration of the PINN framework for PDEs on manifolds \cite{Fang2020PhysicsInformed, Tang2021Physicsinformed, SahliCostabal2024Delta, Bastek2023PhysicsInformed, Zelig2023Numerical}, comprehensive convergence analysis is still absent. This study is the inaugural endeavor to address this gap. We propose the application of PINNs with CNN architectures, coined as PICNNs, to solve general PDEs of order $s$ on a unit sphere, while concurrently establishing convergence analysis for such PDE solvers. We probe into the approximation potential of CNNs by employing the spherical harmonic analysis, assess the Rademacher complexity of our model, and derive elegant generalization bounds through various technical estimates. In this context, our research offers significant insights into the ability of deep learning to tackle high-dimensional PDEs and leverage the low-dimensional attributes of physical domains. We summarize the contributions of this paper as follows.

\begin{itemize}
    \item We are the first to investigate PICNN PDE solvers on a unit sphere. Previous work (e.g., \cite{Fang2020PhysicsInformed, Tang2021Physicsinformed}) employs fully connected neural networks as solvers on the sphere. Although we focus on PINNs working with CNNs, some discussions in this paper are also applicable to the general PINNs. Consequently, we may use the term PINN when referring to the non-convolutional architecture.
    
    \item We present comprehensive analysis of the relationship between PDEs and PINNs, facilitating rigorous assumptions on the well-posedness and regularity of PDEs. In contrast to previous studies (e.g., \cite{Lu2022Machine,Jiao2022rate}) that exclusively focus on second-order elliptic PDEs, our approach encompasses a broad class of PDEs.
      
    \item We demonstrate fast convergence of approximation using $\mathrm{ReLU}$-$\mathrm{ReLU}^k$ CNNs operating on a unit sphere. Conversely, earlier studies \cite{Lu2022Machine, Jiao2022rate} solely explore the approximation ability of $\mathrm{ReLU}^3$ fully connected networks, which frequently suffer from gradient explosion during training. Despite the success of the ReLU network in diverse machine learning tasks attributed to its gradient calculation simplicity, it proves inadequate for approximating high-order derivatives in PDE problems due to the saturation phenomena (e.g., see \cite{Chen2020comparison}). To tackle this issue, we propose a hybrid CNN architecture utilizing both $\mathrm{ReLU}$ and $\mathrm{ReLU}^k$. Leveraging the advantages of each activation function, we achieve an effective approximation of the derivatives of PDE solutions while maintaining ease of network training. During the process of proof, we develop solid analysis employing ideas and techniques from spherical approximation theory and B-spline interpolation. These approximation analyses enable us to address a broad Sobolev smoothness condition, assuming the PDE solution $u^*$ belongs to $W^r_p(\mathbb{S}^{d-1})$.
    
    \item We develop a novel approach involving localization Rademacher complexity analysis to establish an oracle inequality that bounds the statistical error. To the best of our knowledge, we are the first to compute the VC-dimensions of hypothesis spaces generated by the CNN architecture and high-order derivatives. Furthermore, unlike \cite{Lu2022Machine}, our approach does not require a sup-norm restriction on network parameters, making it more aligned with practical algorithms. Additionally, our upper bound exhibits a significantly sharper estimation than that presented in \cite{Jiao2022rate} due to the absence of a localization technique in their work.
    
    \item We derive fast convergence upper bounds for the excess risk of PINNs in diverse scenarios, encompassing varying dimensions $d \geq 2$, PDE orders $s \geq 1$, smoothness orders $r \geq s+1$, and suitably chosen activation degrees $k \geq s$ of $\mathrm{ReLU}^k$. We establish a convergence rate in the form of $n^{-a}(\log n)^{2a}$, where 
    \[
        a = 
        \left\{
            \begin{aligned}
                &\frac{r-s}{(r-s) + (d-1)}, &&\mbox{ if }r < \infty, d > 3,\\
                &1 - \frac{d(k - s + 2) + r + k}{d(k - s + 2) + 2(r - s)(k - s + 1) + r + k}, &&\mbox{ if }r < \infty, 2 \leq d \leq 3,  \\
                &1 - \frac{1}{2(k - s) + 3}, &&\mbox{ if  }r = \infty, d \geq 2.
            \end{aligned}
        \right.
    \]
    
    \item We validate our theory through comprehensive numerical experiments, and by integrating our theoretical analysis with these experiments, we ascertain the conditions under which a PDE PINN solver can surmount the curse of dimensionality.
\end{itemize}

The rest of the paper is organized as follows. In \autoref{section: Preliminaries and Main Result}, we first discuss the strong convexity of the PINN risk, subsequently formulating reasonable assumptions on the well-posedness and regularity of PDEs. We then introduce the structure of the CNN employed in our analysis. After elaborating on the assumptions, we present our main result in \autoref{main theorem}. In \autoref{section: Approximation Error Analysis}, we prove an upper bound for the approximation error, as evidenced in \autoref{upper bound of the approximation error},  leveraging techniques from the spherical harmonics analysis and B-spline interpolation. In \autoref{section: Statistical Error Analysis}, novel localization analysis related to the stochastic component of error evaluation is conducted, leading to the derivation of a pivotal oracle inequality as seen in \autoref{oracle inequality}. \autoref{section: Convergence Analysis: Proof of main theorem} amalgamates the derived approximation bound and the oracle inequality, culminating in the derivation of accelerated convergence rates for PICNN when applied to solving sphere PDEs, which gives a proof of \autoref{main theorem}. We present experimental results in \autoref{Section: Experiments} to validate our theoretical assertions and shed light on the conditions circumventing the algorithmic curse of dimensionality.

\section{Preliminaries and Main Result}\label{section: Preliminaries and Main Result}
\subsection{PINN for General PDEs}\label{subsection: PINN for General PDEs}
Consider solving the following PDE with a Dirichlet boundary condition:
\begin{equation}\label{A general PDE}
    \left\{
        \begin{aligned}
            (\mathcal{L}u)(x) &= f(x),  &&x \in \Omega \subset \mathbb{R}^d, \\
            u(x) &= g(x),  &&x \in \partial\Omega,        
        \end{aligned}
    \right.
\end{equation}
where $\mathcal{L}$ is a general differential operator and $\Omega$ is a bounded domain. To construct an approximate solution, PINN converts \eqref{A general PDE} to a minimization problem with the objective function $\mathcal{R}$ defined as 
\[
    \mathcal{R}(u) = \frac{1}{\mathrm{vol}(\Omega)}\int_{\Omega} \vert (\mathcal{L}u)(x) - f(x)  \vert^2 dx + 
    \frac{1}{\sigma(\partial \Omega)}\int_{\partial \Omega} \vert u(x) - g(x)   \vert^2 d\sigma(x).
\]
Here, $\mathrm{vol}$ is the Lebesgue measure and $\sigma$ is the surface measure on $\partial \Omega$. This objective function is the mean squared error of the residual from \eqref{A general PDE} and the following empirical version is used for numerical optimization:
\[
    \mathcal{R}_{n, m}(u) = \frac{1}{n} \sum_{i=1}^n \vert (\mathcal{L}u)(X_i) - f(X_i) \vert^2  
    + \frac{1}{m} \sum_{i=1}^m \vert u(Y_i) - g(Y_i) \vert^2,
\]
where $\{X_i\}_{i=1}^{n} \subset \Omega, \{Y_i\}_{i=1}^{m} \subset \partial\Omega$ are independent and identically distributed (i.i.d.) random samples from the uniform distribution. $\mathcal{R}(u)$ and $\mathcal{R}_{n, m}(u)$ are referred to as the population risk and empirical risk respectively. Given a function class $\mathcal{F}$, PINN solves the following empirical risk minimization (ERM) problem:
\begin{equation}\label{ERM of general PINN} 
    u_{n, m} = \arg \min_{u \in \mathcal{F}}\mathcal{R}_{n, m}(u).
\end{equation}

If equation \eqref{A general PDE} has a unique classical solution, denoted by $u^*$, obviously $\mathcal{R}(u^*) = \mathcal{R}_{n, m}(u^*) = 0$. Under the framework of learning theory, this ideal solution $u^*$ is termed the Bayes function since it minimizes the population risk $\mathcal{R}(u)$. The performance of the ERM estimator $u_{n, m}$ can be quantified by the excess risk $\mathcal{R}(u_{n, m}) - \mathcal{R}(u^*)$. In our PINN model, given our assumption that a true solution $u^*$ exists, we have $\inf_{u}\mathcal{R}(u) = \mathcal{R}(u^*) = 0$. However, for the sake of conventional notation for excess risk, we will continue to express the excess risk as $\mathcal{R}(u_{n, m}) - \mathcal{R}(u^*)$ instead of $\mathcal{R}(u_{n, m})$. This study establishes rapidly decaying rates of $\mathcal{R}(u_{n, m}) - \mathcal{R}(u^*)$ as the size of the training data set increases. This decaying rate, often termed the convergence rate or learning rate, is an important measure of the algorithm's generalization performance. Contrastingly, the accuracy of an approximate PDE solution is traditionally measured by estimating the error $\|u_{n, m} - u^*\|$, where $\| \cdot \|$ represents the norm of a regularity space, such as the Sobolev space $W^r_p(\Omega)$. This raises a natural question: Can the excess risk $\mathcal{R}(u) - \mathcal{R}(u^*)$ bound $\|u - u^*\|$ in a manner that superior generalization performance corresponds to enhanced accuracy? This property is indispensable for constraining the statistical error through localization analysis. Equally significant is the converse question: Can $\|u - u^*\|$ control $\mathcal{R}(u) - \mathcal{R}(u^*)$? If so, an upper bound on $\|u - u^*\|$, derived from approximation analysis, could provide a bound for the approximation error $\mathcal{R}(u_\mathcal{F}) - \mathcal{R}(u^*)$ where $u_\mathcal{F}:= \arg \min_{u \in \mathcal{F}} \mathcal{R}(u)$. Our primary objective is to establish an equivalence between the excess risk $\mathcal{R}(u) - \mathcal{R}(u^*)$ and the error $\|u - u^*\|_{W^r_p(\Omega)}$. This relationship is known as the strong convexity of the PINN risk with respect to the $W^r_p(\Omega)$ norm, which is only determined by the underlying PDE.

For the sake of theoretical simplicity, we consider a linear PDE where the differential operator $\mathcal{L}$ is linear. Hence we write 
\begin{align*}
    \mathcal{R}(u) - \mathcal{R}(u^*) &= \frac{1}{\mathrm{vol}(\Omega)}\int_{\Omega} \vert (\mathcal{L}u)(x) - f(x)  \vert^2 dx + \frac{1}{\sigma(\partial \Omega)}\int_{\partial \Omega} \vert u(x) - g(x)\vert^2 d\sigma(x) \\
    &= \frac{1}{\mathrm{vol}(\Omega)}\int_{\Omega} \vert \mathcal{L}(u - u^*)(x)\vert^2 dx + \frac{1}{\sigma(\partial \Omega)}\int_{\partial \Omega} \vert u(x) - u^*(x) \vert^2 d\sigma(x) \\
    &= \frac{1}{\mathrm{vol}(\Omega)} \| \mathcal{L}(u - u^*) \|^2_{L^2(\Omega)} + \frac{1}{\sigma(\partial \Omega)}\| u - u^* \|^2_{L^2(\partial \Omega)}.
\end{align*}
Let us first consider controlling $\|u - u^*\|_{W^r_p(\Omega)}$ by $\mathcal{R}(u) - \mathcal{R}(u^*)$. This problem is intrinsically connected to the global $W^r_p$ estimates for PDEs. For an elliptic operator $\mathcal{L}$ of even order $2s$ where $s\in \mathbb{N}$, some prior estimates related to Gårding's inequality have already been well-established.  See \autoref{lem1} and \autoref{lem2} below. Throughout the subsequent discourse, if $A$ and $B$ are two quantities (typically non-negative), the notation $A \lesssim B$ indicates that $A \leq C B$ for some positive constant $C$. It should be emphasized that $C$ is independent of both $A$ and $B$, or is universal for indexed $A$ and $B$ of a certain class. The constant $C$ might depend on particular parameters, such as dimension, regularity, or exponent, as determined by the context. In this paper, we will not delve into the specifics of which parameters $C$ relies on, nor will we discuss the optimal estimation of $C$. Moreover, we use the notation $A \asymp B$ when $A \lesssim B$ and $B \lesssim A$. Denote the Sobolev space $W^s_2 (\cdot)$ by $H^s (\cdot)$ and its subspace consisting of functions vanishing on the boundary by $H^{s}_0(\Omega)$. 

\begin{lemma}\label{lem1}(Theorem 12.8 in \cite{Agmon1959Estimates})
    Let $s \in \mathbb{N}$ and $\mathcal{L}$ denote a uniformly elliptic operator of order $2s$ possessing bounded coefficients, with its leading coefficients being continuous. Furthermore, suppose $\mathcal{L}$ is weakly positive semi-definite. For a sufficiently large positive $\lambda$, the inequality 
    \[ 
        \|u\|_{H^{2s}(\Omega)} \lesssim  \|(\mathcal{L} + \lambda)u\|_{L^2(\Omega)} 
    \]
    holds for every \( u \in H^{2s}(\Omega) \cap H^{s}_0(\Omega) \).
\end{lemma}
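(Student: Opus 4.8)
The plan is to recognize this as a classical a priori (coercivity) estimate for the Dirichlet problem of a higher‑order elliptic operator, and to assemble it from two standard ingredients: (i) Gårding's inequality, which turns weak positive semi‑definiteness into genuine coercivity of $\mathcal{L}+\lambda$ on $H^s_0(\Omega)$ once $\lambda$ is large enough; and (ii) the Agmon--Douglis--Nirenberg global $H^{2s}$ regularity estimate for the Dirichlet problem, which bounds all derivatives up to order $2s$ by the $L^2$ norm of the data plus a single lower‑order term. Ingredient (i) will be used precisely to absorb that lower‑order term.

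First I would introduce the bilinear form $B[u,v]$ associated with $\mathcal{L}$ via integration by parts, so that $\langle \mathcal{L}u,v\rangle_{L^2}=B[u,v]$ for $u\in H^{2s}(\Omega)\cap H^s_0(\Omega)$ and $v\in H^s_0(\Omega)$. Uniform ellipticity, together with continuity of the leading coefficients and boundedness of the lower‑order ones, yields Gårding's inequality: there exist $c_0>0$ and $c_1\ge 0$ with $\mathrm{Re}\,B[u,u]\ge c_0\|u\|_{H^s(\Omega)}^2-c_1\|u\|_{L^2(\Omega)}^2$ for all $u\in H^s_0(\Omega)$. Fixing any $\lambda>c_1$ and taking $v=u$ gives $\mathrm{Re}\,\langle(\mathcal{L}+\lambda)u,u\rangle_{L^2}\ge c_0\|u\|_{H^s(\Omega)}^2$; by Cauchy--Schwarz the left side is at most $\|(\mathcal{L}+\lambda)u\|_{L^2(\Omega)}\|u\|_{L^2(\Omega)}\le\|(\mathcal{L}+\lambda)u\|_{L^2(\Omega)}\|u\|_{H^s(\Omega)}$, so dividing once by $\|u\|_{H^s(\Omega)}$ yields $\|u\|_{L^2(\Omega)}\le\|u\|_{H^s(\Omega)}\lesssim\|(\mathcal{L}+\lambda)u\|_{L^2(\Omega)}$.

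The second ingredient, and the genuine analytic obstacle, is the global estimate $\|u\|_{H^{2s}(\Omega)}\lesssim\|\mathcal{L}u\|_{L^2(\Omega)}+\|u\|_{L^2(\Omega)}$ for $u\in H^{2s}(\Omega)\cap H^s_0(\Omega)$. I would obtain it by the standard localization argument: a partition of unity reduces matters to interior patches and boundary patches; on each patch one freezes the (continuous) leading coefficients at a point, flattens the boundary, and treats the resulting constant‑coefficient problem in a half‑space by Fourier analysis, where the needed multiplier bounds come from uniform ellipticity of the principal symbol and from the fact that the Dirichlet boundary operators $\{\,u,\ \partial_\nu u,\ \dots,\ \partial_\nu^{s-1}u\,\}$ satisfy the Lopatinski--Shapiro complementing condition for the (properly) elliptic operator $\mathcal{L}$. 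Continuity of the leading coefficients makes the frozen‑coefficient commutator errors negligible after shrinking the patches, while the bounded lower‑order coefficients contribute only terms absorbed into $\|u\|_{L^2(\Omega)}$ or, via interpolation, back into the left‑hand side. Patching the local bounds yields the global estimate. This is exactly the content invoked from Agmon's paper, so in the write‑up it is cited rather than reproved; it is also where essentially all the difficulty resides.

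Finally I would combine the two: since $\|\mathcal{L}u\|_{L^2(\Omega)}\le\|(\mathcal{L}+\lambda)u\|_{L^2(\Omega)}+\lambda\|u\|_{L^2(\Omega)}$, the $H^{2s}$ estimate gives $\|u\|_{H^{2s}(\Omega)}\lesssim\|(\mathcal{L}+\lambda)u\|_{L^2(\Omega)}+(\lambda+1)\|u\|_{L^2(\Omega)}$, and substituting the bound $\|u\|_{L^2(\Omega)}\lesssim\|(\mathcal{L}+\lambda)u\|_{L^2(\Omega)}$ from the first step completes the proof. Thus the only hard step is the boundary regularity estimate in (ii); the Gårding step is routine once the inequality is granted, and the concluding combination is elementary algebra.
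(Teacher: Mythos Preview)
The paper does not prove this lemma at all; it is stated as a direct citation of Theorem~12.8 in Agmon and used as a black box in the discussion of strong convexity of the PINN risk. Your sketch (G\r{a}rding's inequality to control $\|u\|_{L^2}$ by $\|(\mathcal{L}+\lambda)u\|_{L^2}$, then the global $H^{2s}$ a~priori estimate to upgrade to $\|u\|_{H^{2s}}$, then combine) is a correct and entirely standard route to this classical result, and you yourself note that the hard step~(ii) is precisely what is being cited from Agmon rather than reproved.
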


\begin{lemma}\label{lem2}(Theorem 15.2 in \cite{Agmon1959Estimates} and remark therein)
    Given $s \in \mathbb{N}$ and $1 < p < \infty$, consider an operator $\mathcal{L}$ of even order $2s$ that is uniformly elliptic. Suppose the coefficients of $\mathcal{L}$ belong to $C(\bar{\Omega})$ and the boundary \(\partial \Omega\) is of class \(C^{2s}\). If $u$ is a solution to the equation \eqref{A general PDE} such that $\|u\|_{W^{2s}_p(\Omega)}$, $\|f\|_{L^p(\Omega)}$, and $\|g\|_{W^{2s - 1/p}_p(\partial \Omega)}$ are all finite, then it follows that 
    \[
        \|u\|_{W^{2s}_p(\Omega)} \lesssim \|f\|_{L^p(\Omega)} + \|g\|_{W^{2s - 1/p}_p(\partial \Omega)} + \|u\|_{L^p(\Omega)}.
    \] 
    Additionally, given the uniqueness of the solution to \eqref{A general PDE} with derivatives of order up to $2s$ in $L^p$, the term \(\|u\|_{L^p(\Omega)}\) can be excluded, resulting in
    \begin{equation}\label{PINN risk controls Sobolev norm}
        \|u\|_{W^{2s}_p(\Omega)} \lesssim \|f\|_{L^p(\Omega)} + \|g\|_{W^{2s - 1/p}_p(\partial \Omega)}.
    \end{equation}
\end{lemma}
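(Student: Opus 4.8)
The plan is to follow the classical Agmon--Douglis--Nirenberg route: first prove the a priori estimate \emph{with} the term $\|u\|_{L^p(\Omega)}$ present, by localization and coefficient freezing, and then remove that term via a compactness argument that uses the assumed uniqueness. As a preliminary reduction I would use the trace/extension theory on the $C^{2s}$ domain $\Omega$ to pick $G\in W^{2s}_p(\Omega)$ realizing the boundary data with $\|G\|_{W^{2s}_p(\Omega)}\lesssim \|g\|_{W^{2s-1/p}_p(\partial\Omega)}$, so that $v:=u-G$ satisfies $\mathcal{L}v = f-\mathcal{L}G=:\tilde f$ with homogeneous Dirichlet conditions and $\|\tilde f\|_{L^p(\Omega)}\lesssim \|f\|_{L^p(\Omega)}+\|g\|_{W^{2s-1/p}_p(\partial\Omega)}$. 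It then suffices to prove $\|v\|_{W^{2s}_p(\Omega)}\lesssim \|\mathcal{L}v\|_{L^p(\Omega)}+\|v\|_{L^p(\Omega)}$ for $v$ with vanishing Dirichlet data.

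Next I would cover $\bar\Omega$ by finitely many small balls and treat interior and boundary balls separately. On an interior ball $B$ one freezes the leading coefficients of $\mathcal{L}$ at the center; the resulting constant-coefficient operator $\mathcal{L}_0$ has a fundamental solution whose $2s$-th order derivatives are Calder\'on--Zygmund kernels, so the Mikhlin--H\"ormander multiplier theorem gives $\|w\|_{W^{2s}_p(\mathbb{R}^d)}\lesssim \|\mathcal{L}_0 w\|_{L^p(\mathbb{R}^d)}$ for compactly supported $w$. Writing $\mathcal{L}=\mathcal{L}_0+(\mathcal{L}-\mathcal{L}_0)$, the difference has leading coefficients of small oscillation on $B$ (by continuity of the coefficients, after shrinking $B$) plus lower-order terms, whose contributions are absorbed, yielding $\|v\|_{W^{2s}_p(\tfrac12 B)}\lesssim \|\mathcal{L}v\|_{L^p(B)}+\|v\|_{L^p(B)}$. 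On a boundary ball a $C^{2s}$ change of variables flattens $\partial\Omega$, and after freezing coefficients one invokes the constant-coefficient half-space estimate with homogeneous Dirichlet data, which rests on the explicit Poisson-type solution operators of ADN theory (the Lopatinski--Shapiro complementing condition holding automatically for the Dirichlet problem of a properly elliptic operator); this again produces a local $W^{2s}_p$ bound in terms of $\|\mathcal{L}v\|_{L^p}$ and $\|v\|_{L^p}$.

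Summing these local estimates against a subordinate partition of unity gives $\|v\|_{W^{2s}_p(\Omega)}\lesssim \|\mathcal{L}v\|_{L^p(\Omega)}+\|v\|_{W^{2s-1}_p(\Omega)}$, since the commutators created by differentiating the cutoffs have order at most $2s-1$; the Ehrling/interpolation inequality $\|v\|_{W^{2s-1}_p(\Omega)}\le \varepsilon\|v\|_{W^{2s}_p(\Omega)}+C_\varepsilon\|v\|_{L^p(\Omega)}$ then absorbs the intermediate term, and translating back from $v$ to $u$ yields the first displayed inequality of the lemma. To upgrade to \eqref{PINN risk controls Sobolev norm}, suppose it fails: there are solutions $u_k$ with $\|u_k\|_{W^{2s}_p(\Omega)}=1$ while $\|f_k\|_{L^p(\Omega)}+\|g_k\|_{W^{2s-1/p}_p(\partial\Omega)}\to 0$; by the estimate just proved, $\|u_k\|_{L^p(\Omega)}$ is bounded away from $0$. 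Applying that estimate to the differences $u_k-u_j$ and using the compact embedding $W^{2s}_p(\Omega)\hookrightarrow L^p(\Omega)$ (Rellich--Kondrachov on the bounded smooth domain), a subsequence of $(u_k)$ is Cauchy in $W^{2s}_p(\Omega)$ and converges to some $u$ with $\|u\|_{W^{2s}_p(\Omega)}=1$, $\mathcal{L}u=0$, and homogeneous Dirichlet data; uniqueness forces $u=0$, a contradiction.

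The genuinely hard ingredient is the constant-coefficient half-space $L^p$ estimate behind the boundary case, i.e.\ the $L^p$-boundedness of the singular-integral (Poisson-type) operators attached to a properly elliptic operator under the complementing boundary condition; the interior multiplier bound is standard Calder\'on--Zygmund theory, and the remainder (partitions of unity, interpolation, the compactness argument) is bookkeeping. Since this delicate boundary analysis is precisely the content of \cite{Agmon1959Estimates} (and parallels the $L^2$ statement recorded in \autoref{lem1}), in the present paper we simply cite it rather than reproduce it.
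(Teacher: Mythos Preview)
Your sketch is a correct outline of the classical Agmon--Douglis--Nirenberg proof, but note that the paper does not prove this lemma at all: it is stated purely as a citation (Theorem 15.2 of \cite{Agmon1959Estimates} and the remark following it), with no argument given. You yourself recognize this in your final sentence, so your proposal is consistent with the paper's treatment; the preceding paragraphs are simply extra detail the paper omits.
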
  

\begin{remark}
    Some similar conclusions can be generalized to elliptic PDEs on unbounded domains or Riemannian manifolds. Relevant discussions can be found in the referenced materials such as \cite{Agmon1959Estimates} (Chapter V) regarding unbounded domains, \cite{Taylor2011Partial} (Chapter 5), \cite{Warner1983Foundations}, and \cite{Lawson1990Spin} (Theorem 5.2. in Chapter III) that delve into Riemannian manifolds. Additionally, the $W^r_p$ estimates for other types of PDEs are also extensively studied. These estimates are integral to understanding the existence, uniqueness, and regularity of the solution.
\end{remark}

Returning to the discussion on PINN, let us consider a $2s$-order elliptic PDE that satisfies the conditions stated in inequality \eqref{PINN risk controls Sobolev norm}. By deriving the following inequality:
\[
    \|u - u^*\|_{H^{2s}(\Omega)} \lesssim \|\mathcal{L}(u - u^*)\|_{L^2(\Omega)} + \|u - u^*\|_{H^{2s-1/2}(\partial \Omega)},
\]
we observe that the norm of the boundary term is inconsistent with $\mathcal{R}(u) - \mathcal{R}(u^*)$. This can be a touchy issue and one may have to impose a hard-constraint for the boundary condition to carry on the analysis, that is, assuming that any potential solution $u$ from $\mathcal{F}$ exactly satisfies the boundary condition:
\begin{equation}\label{The hard-constraint}
    u(x) = g(x), \quad x \in \partial \Omega, u \in \mathcal{F}.
\end{equation}
Then 
\[
    \|u - u^*\|_{H^{2s}(\Omega)} \lesssim \|\mathcal{L}(u - u^*)\|_{L^2(\Omega)}, \quad \text{ for }u \in \mathcal{F} \cap H^{2s}(\Omega).
\]
Generally, \eqref{The hard-constraint} is not a reasonable assumption for a neural network space. \cite{Lu2022Machine} directly assume \eqref{The hard-constraint} for a general neural network function space (see Theorem B.12 therein), which lacks theoretical rigor. Notably, PDEs formulated on the whole sphere can address this problem without necessitating a hard-constraint, as the sphere is a boundaryless manifold. 

Now we consider controlling $\mathcal{R}(u) - \mathcal{R}(u^*)$ by $\|u - u^*\|_{H^s(\Omega)}$, which is much more direct, since
\begin{equation}\label{Sobolev norm controls PINN risk}
    \begin{aligned}
        \mathcal{R}(u) - \mathcal{R}(u^*) &\lesssim \| \mathcal{L}(u - u^*) \|^2_{L^2(\Omega)} + \| u - u^* \|^2_{L^2(\partial \Omega)} \\
        &\lesssim \|u - u^*\|^2_{H^s(\Omega)} + \|u - u^*\|^2_{L^2(\partial \Omega)},
    \end{aligned}
\end{equation}
where we assume that $\mathcal{L}$ is a linear differential operator of order $s$ with essentially bounded coefficient functions.

We conclude that the strong convexity of the PINN risk is a non-trivial property, especially concerning the $W^r_p$ error estimates of $u - u^*$. Another crucial consideration is the existence of a unique solution. If an exact solution does not exist or its existence remains unclear, one can still employ PINN to obtain an approximate solution through the minimization of residuals. Particularly, when the solution lacks uniqueness, PINN is still workable even though we might encounter challenges in ascertaining which solution PINN aims to approximate. Many references are available on the well-posedness of PDEs, and we will not go into further discussion here.

\subsection{PINN for PDEs on Spheres}
Prior to the discussion regarding PDEs on the sphere, one may refer to a brief introduction to the Laplace-Beltrami operator $\Delta_0$ and Sobolev spaces $W^r_p(\mathbb{S}^{d-1}), H^r(\mathbb{S}^{d-1})$ on spheres in \autoref{appsec: The Laplace-Beltrami operator and Sobolev spaces on spheres}. We now formulate the PINN algorithm on the sphere as follows. We focus on linear PDEs on $\mathbb{S}^{d-1}$ with $d \geq 2$, thus eliminating the necessity of imposing a boundary condition:
\begin{equation}\label{PDEs on sphere}
    (\mathcal{L}u)(x) = f(x), \quad x \in \mathbb{S}^{d-1}.
\end{equation}
Let $\sigma$ be the Lebesgue measure on $\mathbb{S}^{d-1}$ and $\omega_d$ be the surface area of $\mathbb{S}^{d-1}$. The population PINN risk is then defined as 
\[
    \mathcal{R}(u) = \frac{1}{\omega_{d}}\int_{\mathbb{S}^{d-1}} \vert (\mathcal{L}u)(x) - f(x)  \vert^2 d\sigma(x).
\]
After drawing $n$ i.i.d. random variables $\{X_i\}_{i=1}^n$ from $\mathbb{S}^{d-1}$ according to the uniform distribution, we define the empirical risk as
\[
    \mathcal{R}_n(u) = \frac{1}{n} \sum_{i=1}^n \vert (\mathcal{L}u)(X_i) - f(X_i) \vert^2.
\]
Consequently, the ERM estimator $u_n$, which belongs to a predefined function space $\mathcal{F}$, is determined by the following optimization problem:
\begin{equation}\label{ERM of PINN} 
    u_n = \arg \min_{u \in \mathcal{F}}\mathcal{R}_n(u).
\end{equation}
To enhance the algorithm's stability for practical applications, it is advantageous to consider the use of uniform lattices when generating training sample points, as they can offer a minimal fill distance. An example of such a lattice is the Fibonacci lattice, which provides near-uniform coverage on the $2$-D sphere (see \cite{Aistleitner2012Point}). However, generating almost uniform lattices on a general manifold can pose significant challenges. As an alternative, a uniform random sampling method may be employed. For the sake of simplicity in our analysis, we restrict our discussion to uniform random sampling on the sphere in this paper.

As previously discussed in \autoref{subsection: PINN for General PDEs}, we can identify approximate solutions within a function space without the hard constraint \eqref{The hard-constraint}, by the absence of a boundary on the sphere. It is also essential to assume that \eqref{PDEs on sphere} possesses a unique solution $u^*$. Furthermore, the strong convexity of the PINN risk is a requirement for both approximation and statistical analysis. For the approximation of $u^*$ using neural networks, it is imperative to make reasonable assumptions regarding the excess regularity of $u^*$. Take, for instance, $\mathcal{L}$, an elliptic operator of order $s$. By \eqref{Sobolev norm controls PINN risk}, our aim is to approximate $u^*$ in $H^s(\mathbb{S}^{d-1})$ norm, necessitating a higher order of smoothness of $u^*$ according to our approximation analysis. We should then presume $u^* \in H^r(\mathbb{S}^{d-1})$ for some $r \geq s+1$. Based on the elliptic regularity theorem (Theorem 6.30 in \cite{Warner1983Foundations}), we establish:
\[
    u^* \in H^{-\infty}(\mathbb{S}^{d-1}), f \in H^{t}(\mathbb{S}^{d-1}) \Longrightarrow u^* \in H^{t + s}(\mathbb{S}^{d-1}),
\]
where $H^{-\infty}(\mathbb{S}^{d-1})$ represents the dual space of $H^{\infty}(\mathbb{S}^{d-1})$.
Consequently, it is sufficient to assume $f \in H^{r-s}(\mathbb{S}^{d-1})$ in this context.

To employ concentration inequalities such as Bernstein's and Talagrand's inequalities in the generalization analysis, the introduction of a boundedness assumption becomes necessary. This requirement must be satisfied by both $u^*$ and the approximate function $u \in \mathcal{F}$ up to derivatives of order $s$. Our approximation analysis allows us to establish an upper bound for $\|u\|_{W^s_{\infty}(\mathbb{S}^{d-1})}$ only if $u^* \in W^r_{\infty}(\mathbb{S}^{d-1})$ where $r \geq s+1$. As a result, it is essential to assume, at the very least, that $u^* \in W^r_{\infty}(\mathbb{S}^{d-1})$ for some $r \geq s+1$. In conclusion, we state the following assumption.

\begin{assumption}\label{Assumption for PDE}
    Consider the sphere PDE \eqref{PDEs on sphere}. Assume that $\mathcal{L}$ is a linear differential operator of order $s \in \mathbb{N}$, taking the form:
    \begin{equation}\label{form of linear differential operator}
        \mathcal{L} = \sum_{\vert \alpha \vert \leq s}a_\alpha(x)D^\alpha,
    \end{equation}
    where $\alpha = (\alpha_1, \alpha_2, \cdots, \alpha_n)$ denotes a multi-index of non-negative integers, $\vert \alpha \vert = \alpha_1 + \alpha_2 + \cdots + \alpha_n$, $a_\alpha  \in L^\infty(\mathbb{S}^{d-1})$, and
    \[
        D^\alpha = \frac{\partial^{\vert \alpha \vert}}{\partial x^{\alpha_1}_1 \partial x^{\alpha_2}_2 \cdots \partial x^{\alpha_d}_d}.
    \] 
    Assume that \eqref{PDEs on sphere} has a unique solution $u^* \in W^r_{\infty}(\mathbb{S}^{d-1})$ for some $r \geq s + 1$.
    Furthermore, assume that the following $H^s(\mathbb{S}^{d-1})$ estimate holds for all $u \in H^s(\mathbb{S}^{d-1})$:
    \begin{equation}\label{H^s estimates}
        \|u - u^*\|_{H^s(\mathbb{S}^{d-1})} \lesssim \|\mathcal{L}(u - u^*)\|_{L^2(\mathbb{S}^{d-1})}.
    \end{equation}
\end{assumption}
\begin{remark}
    In \autoref{Assumption for PDE}, we suppose that the operator $\mathcal{L}$ is linear. The linearity is assumed primarily to expound upon the strong convexity associated with the PINN risk. However, this property is barely used in the convergence analysis. As such, it is feasible to consider a nonlinear operator, denoted as $\mathcal{T}$, defined as:
    \begin{equation*}\label{form of nonlinear differential operator}
        \mathcal{T} = \sum_{\vert \alpha \vert \leq s}a_\alpha D^\alpha \mbox{ such that } \mathcal{T}u = f, 
    \end{equation*}
    where coefficients $a_\alpha$ and the function $f$ might be dependent on the unknown function $u$ and its higher-order derivatives. Given that strong convexity is maintained with $\mathcal{T}$ as:
    \[ 
        \| \mathcal{T}u - \mathcal{T}u^* \|_{L^2(\mathbb{S}^{d-1})} \asymp \| u - u^* \|_{H^s(\mathbb{S}^{d-1})},
    \]
    and that $\mathcal{T}$ exhibits Lipschitz behavior for any $u_1, u_2 \in H^s(\mathbb{S}^{d-1})$ on a point-wise basis (refer to \eqref{Lipschitz of loss function}):
    \begin{align*}
        \big\vert (\mathcal{T}u_1)(x) - (\mathcal{T}u_2)(x)\big\vert \lesssim \sum_{\vert \alpha \vert \leq s} \vert D^\alpha u_1(x) - D^\alpha u_2(x) \vert,
    \end{align*}
    with coefficients $a_\alpha$ and $f$ possessing bounded sup-norms, our analysis presented in this paper remains largely applicable. Nonetheless, it warrants mention that verifying such conditions is extremely nontrivial for nonlinear operators. 
\end{remark}

Consequently, the assumption immediately indicates that $f \in L^{\infty}(\mathbb{S}^{d-1})$ and \eqref{Sobolev norm controls PINN risk} holds true. We admit that the assumption \eqref{H^s estimates} is non-trivial and has been deliberated under certain conditions in \autoref{subsection: PINN for General PDEs}, wherein \eqref{H^s estimates} could be met. Subsequently, we provide two specific examples in \autoref{appsec: Two PDE examples}.

\subsection{The CNN Architectures}\label{section: The CNN Architectures}
In this study, we specifically concentrate on the computation of approximate solutions through the ERM algorithm \eqref{ERM of PINN} within a designated space $\mathcal{F}$, which is generated by 1-D Convolutional Neural Networks (CNNs) induced by 1-D convolutions.  We present the following definition of the CNN architecture in the context of our analysis.

The CNN is specified by a series of convolution kernels, $\{w^{(l)}\}_{l=1}^{L}$, where each $w^{(l)}: \mathbb{Z} \to \mathbb{R}$ represents a vector indexed by $\mathbb{Z}$ and supported on $\{0, \ldots, S^{(l)}-1\}$, given a kernel size $S^{(l)} \geq 3$. We can iteratively define a 1-D deep CNN with $L$ hidden layers using the following expressions:
\begin{equation}\label{CNN layer}
    \begin{aligned}
        F^{(0)}:\mathbb{R}^d \to \mathbb{R}^d, \quad F^{(0)}(x) &= x; \\
        F^{(l)}:\mathbb{R}^d \to \mathbb{R}^{d_l},  \quad F^{(l)}(x) &= \sigma^{(l)}\left( \left( \sum_{j=1}^{d_{l-1}} w^{(l)}_{i-j} \big(F^{(l-1)}(x)\big)_j \right)_{i=1}^{d_l} - b^{(l)}\right), \\ 
        &l=1,\ldots,L.
    \end{aligned}
\end{equation}

In the above formulations, we denote the network widths as $d_0 = d, \{d_l = d_{l-1} + S^{(l)}-1\}_{l=1}^{L}$ and define the bias as $b^{(l)} \in \mathbb{R}^{d_l}$. The element-wise activation function, $\sigma^{(l)}: \mathbb{R} \to \mathbb{R}$, utilizes the Rectified Linear Unit (ReLU) function, $\sigma(x) = \max\{x,0\}$, which operates on each convolution layer. The convolution of a sequence $w^{(l)}$ on $F^{(l-1)}$ can be described through a convolutional matrix multiplication:
\[
    \left( \sum_{j=1}^{d_{l-1}} w^{(l)}_{i-j} \big(F^{(l-1)}(x)\big)_j \right)_{i=1}^{d_l}
    = T^{(l)} F^{(l-1)}(x),\] where $T^{(l)}$ represents a $d_l \times d_{l-1}$ matrix, defined as:
    \[
        T^{(l)} = 
        \begin{bmatrix}
            w^{(l)}_0  & 0 & 0 & 0 & \cdots & 0 \\
            w^{(l)}_1 & w^{(l)}_0  & 0 & 0 & \cdots & 0 \\
            \vdots & \ddots & \ddots &\ddots & \ddots & \vdots \\
            w^{(l)}_{S^{(l)}-1} & w^{(l)}_{S^{(l)}-2} & \cdots & w^{(l)}_{0} & 0\cdots & 0 \\
            0 & w^{(l)}_{S^{(l)}-1} & \cdots & w^{(l)}_{1} & w^{(l)}_{0} \cdots & 0 \\
            \vdots & \ddots & \ddots &\ddots & \ddots & \vdots \\
            \dots &  \cdots & 0 & w^{(l)}_{S^{(l)}-1} &\cdots & w^{(l)}_{0}\\
            \dots & \dots & 0 & 0 & w^{(l)}_{S^{(l)}-1}\cdots & w^{(l)}_{1} \\
            \vdots & \ddots & \ddots &\ddots & \ddots & \vdots \\
            0 & \cdots & \cdots & 0 & w^{(l)}_{S^{(l)}-1} & w^{(l)}_{S^{(l)}-2} \\ 
            0 & \cdots & \cdots & 0 & 0  & w^{(l)}_{S^{(l)}-1} \\
        \end{bmatrix}.
    \]
Upon the completion of $L$ convolution layers, a pooling operation is typically employed to decrease the output dimension. In this context, we consider a downsampling operator $\mathcal{D}: \mathbb{R}^{d_L} \to \mathbb{R}^{\left\lfloor d_L/d \right\rfloor}$ defined as $\mathcal{D}(x) = (x_{id})_{i=1}^{\left\lfloor d_L/d \right\rfloor }$. The convolution layers and pooling operator can be viewed as a feature extraction model. Finally, $L_0$ fully connected layers are implemented and an affine transformation computes the entire network's output $F^{(L+L_0+1)}(x) \in \mathbb{R}$ according to the following formulation:
\begin{equation}\label{FCNN layer}
    \begin{aligned}
        F^{(L+1)}&:\mathbb{R}^d \to \mathbb{R}^{d_{L+1}},\\
        &F^{(L+1)}(x) = \sigma^{(L+1)}\left( W^{(L+1)}\mathcal{D}\big(F^{(L)}(x)\big) - b^{(L+1)}\right);\\
        F^{(l)}&:\mathbb{R}^d \to \mathbb{R}^{d_l}, \\
        &F^{(l)}(x) = \sigma^{(l)}\left( W^{(l)}F^{(l-1)}(x) - b^{(l)}\right),\ l=L+2,\ldots,L+L_0; \\
        F^{(L+L_0+1)}&:\mathbb{R}^d \to \mathbb{R},\\
        &F^{(L+L_0+1)}(x) = W^{(L+L_0+1)} \cdot F^{(L+L_0)}(x) - b^{(L+L_0+1)}.
    \end{aligned}
\end{equation} 
Here, the terms $W^{(L+1)} \in \mathbb{R}^{d_{L+1} \times \left\lfloor d_{L}/d \right\rfloor}$, $W^{(l)} \in \mathbb{R}^{d_{l} \times d_{l-1}}$ for $l=L+2,\ldots,L+L_0$, and $W^{(L+L_0+1)} \in \mathbb{R}^{d_{L+L_0}}$ represent weight matrices. The elements $b^{(l)} \in \mathbb{R}^{d_{l}}$ for $l=L+1,\ldots,L+L_0$, and $b^{(L+L_0+1)} \in \mathbb{R}$ are biases.  As stated in \autoref{Section: Introduction}, it is not appropriate to use the ReLU function as the single activation function in our PICNN model. The composite function of some ReLU and affine functions becomes a piecewise linear function, which results in the network output's second derivative being strictly $0$. This outcome prevents approximating the true solution with a smoothness order of at least $s + 1 \geq 2$, a phenomenon known as saturation in approximation theory. To address this issue, we utilize alternative activation functions with non-linear second derivatives in place of the ReLU function in at least one fully connected layer. Suitable alternatives encompass $\mathrm{ReLU}^k$ \cite{Yang2024nonparametric, Yang2024optimal} for $k \geq s$, the logistic function, and smooth approximations of $\mathrm{ReLU}$ such as Softplus and $\mathrm{GeLU}$ \cite{Hendrycks2016Gaussian}:
\begin{align*}
    \mathrm{ReLU}^k(x) &= \max\{0,x\}^k; \\
    \mathrm{logistic}(x) &= \frac{1}{1 + e^{-x}}; \\
    \mathrm{Softplus}(x) &= \log(1+e^{x}); \\
    \mathrm{GeLU}(x) &= x\Phi(x) \approx 0.5x\big(1 + \tanh [\sqrt{2/\pi}(x+0.044715x^3)]\big).
\end{align*} 
Here, $\Phi$ is the cumulative distribution function of the standard normal distribution. In comparison to the depth $L$ of convolution layers, the depth $L_0$ of the Fully Connected Neural Network (FCNN) is typically much smaller. In this study, we focus on the $\mathrm{ReLU}^k$ case and set $L_0 = 2$. We emphasize here that we keep using $\mathrm{ReLU}$ in the convolution layers and only use $\mathrm{ReLU}^k$ in one fully connected layer. As we will show in the approximation error analysis (see \autoref{construction of convolution layers} below), the convolution layers are used for calculating the inner products of input $x$ and cubature samples $y_i \in \mathbb{S}^{d-1}$, which is a linear process and $\mathrm{ReLU}$ suffices. This aligns with the role of the convolutional layer in extracting features in practical applications. One fully connected layer of $\mathrm{ReLU}^k$ is sufficient to implement spline approximation (see \autoref{construction of FCNN} below) and address the phenomena of saturation. Additionally, minimizing the use of $\mathrm{ReLU}^k$ as the activation functions in network design helps prevent gradient explosion during training.

The function space $\mathcal{F}$ is defined by the output $F^{(L+L_0+1)}$. More specifically, $\mathcal{F}$ is characterized by network parameters $L, L_0, \{S^{(l)}\}_{l=1}^L, \{d_{l}\}_{l=L+1}^{L+L_0}$, activation functions $\{\sigma^{(l)}\}_{l=1}^{L+L_0}$, and supremum norm constraints $B_i$ for $i = 1 , \ldots , 4$:
\begin{equation}\label{sup-norm constraint for parameters}
    \begin{aligned}
        &\max_{l=1,\ldots,L}\|w^{(l)}\|_\infty \leq B_1, \\
        &\max_{l=1,\ldots,L}\|b^{(l)}\|_\infty \leq B_2, \\
        &\max_{l=L+1,\ldots,L+L_0+1}\|W^{(l)}\|_{\max} \leq B_3, \\
        &\max_{l=L+1,\ldots,L+L_0+1}\|b^{(l)}\|_\infty \leq B_4.
    \end{aligned} 
\end{equation} 
Additionally, the total number of free parameters $\mathcal{S}$ also defines the function space. When counting the total number of free parameters, we consider sparsity and parameters sharing in the network, which can lead to a sharper bound on $\mathcal{S}$, a lower network complexity, and ultimately a faster convergence rate. Another perspective involves restricting the supremum norm of the network output and its derivatives up to the order of $s$, which is vital for ensuring a bounded condition in the contraction inequality. Therefore, we choose $M>0$ to ensure
\begin{equation}\label{sup-norm constraint}
    \max_{\vert \alpha \vert \leq s}\|D^\alpha F^{(L+L_0+1)}\|_\infty \leq M \mbox{ when restricting $F^{(L+L_0+1)}$ on $\mathbb{S}^{d-1}$.}
\end{equation}

Precisely, we describe the function space $\mathcal{F}$ generated by 1-D CNN, which is given by
\begin{equation}\label{Assumption for hypothesis space}
    \begin{aligned}
        &\mathcal{F}(L, L_0, S, d_{L+1}, \ldots, d_{L+L_0}, \mathrm{ReLU}, \mathrm{ReLU}^{k}, M, \mathcal{S}) \\
        ={}& 
        \left\{ 
            F^{(L+L_0+1)}: \mathbb{S}^{d-1} \to \mathbb{R} 
            \left\vert
                \begin{array}{l}
                    \text{$F^{(L+L_0+1)}$ satisfies \eqref{sup-norm constraint}, which} \\
                    \text{is defined by \eqref{CNN layer} and \eqref{FCNN layer} with} \\
                    \text{$S^{(l)} = S, \sigma^{(l)} = \mathrm{ReLU}$ for $l \neq L+1$,} \\
                    \text{$\sigma^{(l)} = \mathrm{ReLU}^{k}$ for $l = L+1, k \geq s$; } \\
                    \text{the total number of free parameters} \\
                    \text{is less than or equal to $\mathcal{S}$.}
                \end{array}
            \right.
        \right\}.
    \end{aligned}
\end{equation}  
When contextually appropriate, we will continue to use the abbreviated notation, $\mathcal{F}$, to denote the aforementioned function space. By leveraging an innovative localization technique developed by a series of works \cite{Koltchinskii2000Rademacher, Koltchinskii2002Empirical, Bartlett2005Local, Koltchinskii2006Local, Koltchinskii2011Oracle, Zhou2024learning}, our function space associated with CNN does not mandate a sup-norm restriction \eqref{sup-norm constraint for parameters} on the network's trainable parameters, setting us apart from the previous analysis in \cite{Lu2022Machine}. However, for the sake of comparison, we will provide sup-norm bounds during our discussion of CNN's approximation capabilities. In addition to \autoref{Assumption for PDE}, we propose a further assumption, as outlined below.
\begin{assumption}\label{Assumption for CNN}
    The function space, denoted by 
    \[
        \mathcal{F}=\mathcal{F}(L, L_0, S, d_{L+1}, \ldots, d_{L+L_0}, \mathrm{ReLU}, \mathrm{ReLU}^{k}, M, \mathcal{S}),
    \]
    is defined as \eqref{Assumption for hypothesis space}. Herein, $M$ is chosen to be sufficiently large such that \eqref{sup-norm constraint} is satisfied and 
    \begin{equation}\label{sup-norm constraint for u^*}
        M \geq 3C_9\|u^*\|_{W^r_\infty(\mathbb{S}^{d-1})},
    \end{equation} where $u^* \in W^r_{\infty}(\mathbb{S}^{d-1})$ with $r \geq s + 1$ is the unique solution of equation \eqref{PDEs on sphere} and the constant $C_9$ only depends on $d, k$ and $s$(see \autoref{Sobolev approximation lemma} below).
\end{assumption}

\subsection{Main Result}

In this subsection, our primary focus lies in presenting our main result, where we establish fast convergence rates for PICNN in solving PDEs on spheres. Recall that, the function space denoted by 
\[
    \mathcal{F}(L, L_0, S, d_{L+1}, \ldots, d_{L+L_0}, \mathrm{ReLU}, \mathrm{ReLU}^{k}, M, \mathcal{S})
\] 
produced by 1D CNNs is provided in \eqref{Assumption for hypothesis space}. This space is parameterized by the number of convolutional layers ($L$), fully connected layers ($L_0$), the size of convolution kernels ($S$), the neuron count in fully connected layers ($\{d_{l}\}_{l=L+1}^{L+L_0}$), and the upper limits of output function and the overall count of free parameters ($M$ and $\mathcal{S}$ respectively). We highlight that the proposed CNN architecture \eqref{Assumption for hypothesis space} is supplemented with downsampling layers and utilizes both $\mathrm{ReLU}$ and $\mathrm{ReLU}^k$ activation functions. The former activation function is employed on the convolutional layers, while the latter operates on the fully connected layers. Our theoretical analysis and numerical experiments necessitate just $L_0=2$. This architecture has displayed impressive results in areas such as natural language processing, speech recognition, and biomedical data classification, as documented in \cite{Kiranyaz20211D} and the references therein. When contrasted with 2D CNNs that are only designed for 2D data like images and videos, 1D CNNs significantly curtail the computational load and are proven to be effective for handling data generated by low-cost applications, especially on portable devices. In this study, we apply this CNN architecture to solve the spherical PDE represented by \eqref{PDEs on sphere}. We will demonstrate that under a regularity condition, i.e., when the spherical PDE \eqref{PDEs on sphere} satisfies \autoref{Assumption for PDE}, both the excess risk and estimation error of PICNN estimators decay at polynomial rates. For two positive sequences, $\{A_n\}_{n\geq 1}$ and $\{B_n\}_{n\geq 1}$, recall that $A_n \lesssim B_n$ indicates there exists a positive constant $C$ independent of $n$, such that $A_n \leq C B_n, \forall \;n\geq 1$. Moreover, we write $A_n \asymp B_n$ if and only if both $A_n \lesssim B_n$ and $B_n \lesssim A_n$ hold true. Recall that $u^*$ denotes the solution of the equation \eqref{PDEs on sphere}. 

\begin{theorem}\label{main theorem}
    Suppose that \autoref{Assumption for PDE} is satisfied with some $d \geq 2, s \geq 1, r \geq s+1$ and \autoref{Assumption for CNN} holds. Let $d \geq 2$ and $\mathbf{x} = \{X_i\}_{i=1}^n$ be an i.i.d. sample following the uniform distribution on $\mathbb{S}^{d-1}$. Choose $3 \leq S \leq d+1$ and $k$ satisfying
    \[
        \left\{
            \begin{aligned}
                &k = s + \left\lceil \frac{r + s + 2}{d - 3} \right\rceil \geq s + 1, && \mbox{ if } r < \infty, d > 3,\\
                &k \geq s, && \mbox{ if } r < \infty, 2 \leq d \leq 3 \mbox{ or } r = \infty, d \geq 2.\\
            \end{aligned}
        \right.\] 
    Let $u_n$ be the estimator of PICNN solving sphere PDE \eqref{PDEs on sphere}, which is defined by \eqref{ERM of PINN} in the function space of CNN 
    \[
        \mathcal{F} = \mathcal{F}(L, L_0 = 2, S, d_{L+1}, d_{L+2}, \mathrm{ReLU}, \mathrm{ReLU}^{k}, M, \mathcal{S})
    \] 
    with 
    \begin{align*}
        L & \asymp n^{\frac{a(d-1)}{2(r - s)}}(\log n)^{-\frac{a(d-1)}{r - s}},\\
        d_{L+1} &\asymp  n^{\frac{a(d + r + s - 1)}{2(r - s)(k - s + 1)} + \frac{a(d+1)}{2(r - s)}} (\log n)^{- \frac{a(d + r + s - 1)}{(r - s)(k - s + 1)} - \frac{a(d+1)}{r - s}}, \\
        d_{L+2} &\asymp n^{\frac{a(d-1)}{2(r-s)}}(\log n)^{- \frac{a(d-1)}{r-s}}, \\
    \end{align*} 
    and
    \begin{align*}
        \mathcal{S} \asymp 
            \left\{
                \begin{aligned}
                    & n^{\frac{a(d-1)}{2(r-s)}}(\log n)^{-\frac{a(d-1)}{r-s}}, && \mbox{ if } r < \infty, d > 3,\\
                    & n^{\frac{a(d + r + s - 1)}{2(r-s)(k - s + 1)} + \frac{a}{r-s}} (\log n)^{- \frac{a(d + r + s - 1)}{(r-s)(k - s + 1)} - \frac{2a}{r-s}}, &&\mbox{ if } r < \infty, 2 \leq d \leq 3 \mbox{ or } r = \infty, d \geq 2.
                \end{aligned}
            \right.
    \end{align*} 
    Here, the constant $a$ is given by
    \[
        a = 
        \left\{
            \begin{aligned}
                &\frac{r-s}{(r-s) + (d-1)}, && \mbox { for }r < \infty, d > 3,\\
                &1 - \frac{d(k - s + 2) + r + k}{d(k - s + 2) + 2(r - s)(k - s + 1) + r + k}, && \mbox{ for }r < \infty, 2 \leq d \leq 3,  \\
                &1 - \frac{1}{2(k - s) + 3}, &&\mbox{ for }r = \infty, d \geq 2.
            \end{aligned}
        \right.
    \]
    Then for all $n \geq 1$, with probability at least $1 - \exp(-n^{1-a} (\log n)^{2a})$, there hold
    \[
        \mathcal{R}(u_n) - \mathcal{R}(u^*) \lesssim  n^{-a}(\log n )^{2a} 
    \]
    and
    \[
        \|u_n - u^*\|_{H^s(\mathbb{S}^{d-1})} \lesssim  n^{-a/2}(\log n )^{a}.
    \]
\end{theorem}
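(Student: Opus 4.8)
The plan is to bound the excess risk $\mathcal{R}(u_n)-\mathcal{R}(u^*)$ by the classical split into an \emph{approximation error} and a \emph{statistical error}, and then to choose the CNN size parameters $L,d_{L+1},d_{L+2},\mathcal{S}$ (together with $k,S$) so that the two contributions are balanced at the common order $n^{-a}(\log n)^{2a}$. The $H^s$ bound is then a free consequence: since $u^*$ solves \eqref{PDEs on sphere} we have $\mathcal{R}(u^*)=0$ and $\mathcal{R}(u)=\omega_d^{-1}\|\mathcal{L}(u-u^*)\|_{L^2(\mathbb S^{d-1})}^2$, so the ``strong convexity'' estimate \eqref{H^s estimates} of \autoref{Assumption for PDE} yields $\|u-u^*\|_{H^s(\mathbb S^{d-1})}^2\lesssim \omega_d\big(\mathcal{R}(u)-\mathcal{R}(u^*)\big)$ for every $u\in H^s(\mathbb S^{d-1})$; applying this to $u_n$ and taking square roots converts the excess-risk rate into $\|u_n-u^*\|_{H^s}\lesssim n^{-a/2}(\log n)^{a}$ on the same event. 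Thus the whole argument reduces to establishing the excess-risk rate with the stated probability.

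\textbf{Approximation error.} First I would invoke \autoref{upper bound of the approximation error}: for the architecture \eqref{Assumption for hypothesis space} with the prescribed $L,d_{L+1},d_{L+2},k,S$ there is $u_{\mathcal{F}}\in\mathcal{F}$ with $\|u_{\mathcal{F}}-u^*\|_{W^s_\infty(\mathbb S^{d-1})}$, hence also $\|u_{\mathcal{F}}-u^*\|_{H^s(\mathbb S^{d-1})}$, controlled by an explicit negative power of the network size depending on $r,s,d,k$. Because $\mathcal{L}$ has order $s$ and $L^\infty$ coefficients, the pointwise bound \eqref{Lipschitz of loss function} gives $\mathcal{R}(u_{\mathcal{F}})-\mathcal{R}(u^*)=\omega_d^{-1}\|\mathcal{L}(u_{\mathcal{F}}-u^*)\|_{L^2}^2\lesssim\|u_{\mathcal{F}}-u^*\|_{H^s}^2$, giving the approximation part of the excess risk as a decreasing function of $L,d_{L+1},\mathcal{S}$. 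The constant $C_9$ in \autoref{Assumption for CNN} is precisely what ensures $u_{\mathcal{F}}$ and its derivatives up to order $s$ obey the sup-norm constraint \eqref{sup-norm constraint}, so $u_{\mathcal{F}}$ is an admissible competitor in the ERM \eqref{ERM of PINN}; the convolution layers realize the cubature inner products (ReLU suffices) and the single $\mathrm{ReLU}^k$ fully connected layer implements the B-spline piece, which is what defeats saturation.

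\textbf{Statistical error.} Next I would apply the oracle inequality \autoref{oracle inequality}, built from localized Rademacher complexity. Two inputs are needed. (i) A Bernstein/variance condition $\mathbb{E}\big[(\ell_u-\ell_{u^*})^2\big]\lesssim \mathcal{R}(u)-\mathcal{R}(u^*)$ for the pointwise loss $\ell_u(x)=|(\mathcal{L}u)(x)-f(x)|^2$, which follows by combining the boundedness \eqref{sup-norm constraint}, the pointwise Lipschitz estimate \eqref{Lipschitz of loss function}, and the $H^s$ estimate \eqref{H^s estimates}; this is exactly the "bounded condition in the contraction inequality" the architecture is designed to guarantee. (ii) A complexity bound for the loss class $\{x\mapsto|(\mathcal{L}u)(x)-f(x)|^2:u\in\mathcal{F}\}$ through the VC-/pseudo-dimension of $\mathcal{F}$ \emph{and} of the derivative classes $\{D^\alpha F^{(L+L_0+1)}:|\alpha|\le s\}$, which is polynomial in $\mathcal{S}$ up to logarithmic factors. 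Solving the sub-root fixed-point equation for the localized complexity then gives a statistical error of order $\mathcal{S}/n$ up to logarithmic factors, uniformly over $\mathcal{F}$, while Talagrand's concentration inequality supplies the exponential deviation probability $1-\exp\!\big(-n^{1-a}(\log n)^{2a}\big)$. Importantly, because localization is used, no sup-norm restriction \eqref{sup-norm constraint for parameters} on the trainable weights is required.

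\textbf{Balancing and case analysis.} Finally I would equate the approximation bound (a negative power of the network size) with the statistical bound ($\asymp \mathcal{S}/n$ up to logs), solve for $L,d_{L+1},d_{L+2},\mathcal{S}$, and verify that the displayed choices realize this balance and produce the claimed exponent $a$ in each of the three regimes $r<\infty,d>3$; $r<\infty,2\le d\le3$; and $r=\infty$. The main obstacle, I expect, is precisely this bookkeeping: one must compute the VC-dimension of hypothesis spaces formed from CNNs composed with their high-order derivatives (a point the paper flags as new), account for sparsity and weight sharing in the count $\mathcal{S}$, and carry the three-regime optimization through cleanly; the choice $k=s+\lceil(r+s+2)/(d-3)\rceil$ in the regime $d>3$ is forced by requiring the $\mathrm{ReLU}^k$ spline error to decay fast enough against the growth of the complexity. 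A secondary delicate point is verifying the variance/Lipschitz conditions when the network parameters are unbounded, which is why the localization machinery — rather than a crude uniform bound — is essential.
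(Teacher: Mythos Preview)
Your proposal follows the paper's proof essentially step by step: invoke \autoref{upper bound of the approximation error} for the approximation bound, apply the oracle inequality \autoref{oracle inequality} (built on the VC-dimension estimate \autoref{VCDim bound} for the derivative classes $D^\alpha\mathcal F$) for the statistical bound, balance the two by choosing $\varepsilon=n^{-a}(\log n)^{2a}$ and then $t=n^{1-a}(\log n)^{2a}$, and finally convert via \eqref{H^s estimates}. One imprecision to watch: the statistical term in \autoref{oracle inequality} is not $\mathcal S/n$ but $\mathrm{VC}_{\mathcal F}\,n^{-1}\log n$ with $\mathrm{VC}_{\mathcal F}\asymp\mathcal S\cdot L$ up to logs (the depth $L$ scales polynomially in $n$, not logarithmically), and it is precisely the product $\mathcal S L$---not $\mathcal S$ alone---that drives the three-regime case split and forces the choice of $k$ when $d>3$.
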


To the best of our knowledge, \autoref{main theorem} provides the first rigorous analysis of convergence for the PINN algorithm with a CNN architecture, thereby demonstrating its practical performance. Recent advancements in the theory of approximation and complexity for deep ReLU CNNs lay the foundation for our results. A significant contribution of our proof is the implementation of a scale-sensitive localization theory with scale-insensitive measures, such as VC-dimension. This approach is coupled with recent work on CNN approximation theory \cite{Zhou2020Universality, Zhou2020Theory, Fang2020Theory, Feng2023Generalization}, enabling us to derive these elegant bounds and fast rates. The idea of our localization is rooted in the work of \cite{Koltchinskii2000Rademacher, Koltchinskii2002Empirical, Bartlett2005Local, Zhou2024learning}, which allows us to examine broader classes of neural networks. This contrasts with analyses that limit the networks to have bounded parameters for each unit, a constraint resulting from applying scale-sensitive measures like metric entropy. In our proof, we have broadened previous approximation results and established an estimation of VC-dimensions involving the function spaces generated by CNNs and their derivatives. These results hold significance in their own right and warrant special attention (refer to \autoref{section: Approximation Error Analysis} and \autoref{section: Statistical Error Analysis}). To our knowledge, our work is the first to establish corresponding localization analysis for estimating the part of the stochastic error in the theoretical analysis framework of solving equations with neural networks. Our results can be generalized to theoretical analysis of other PDE solvers with CNNs. The sphere is commonly viewed as the quintessential low-dimension manifold; hence, our work has the potential to be extrapolated to PDE solvers operating on general manifolds. Exploring further possibilities by extending this approach to other neural network architectures and different types of PDEs can certainly be advantageous.  We posit, with firm conviction, that for an $s$-order PDE PINN solver operating on an arbitrary $m$-dimensional manifold $\mathcal{M}$ embedded in $\mathbb{R}^d$, and assuming the true solution $u^* \in C^r(\mathcal{M})$ (or within another regularity function space, such as Sobolev space), it is plausible to demonstrate a convergence rate of $n^{-\frac{r-s}{r-s + m}}$ (modulated by a logarithmic factor). This intriguing prospect presents promising avenues for future research.

\subsection{Error Decomposition}\label{subsection: Error Decomposition}

This subsection primarily establishes the proof structure for \autoref{main theorem}, which is derived from an error decomposition of the excess risk that we introduce below. Error decomposition is a standard paradigm for deducing the generalization bounds of the ERM algorithm. Research on algorithmic generalization bounds is a fundamental concern in learning theory. Specifically, it involves providing a theoretical non-asymptotic bound for the excess risk $\mathcal{R}(u_n) - \inf_{u}\mathcal{R}(u)$ with respect the number of training samples $n$. We now apply a standard error decomposition procedure to the excess risk of PINN. If we define $u_\mathcal{F} = \arg \min_{u \in \mathcal{F}} \mathcal{R}(u)$, we obtain the following decomposition:
\begin{align*}
    \mathcal{R}(u_n) - \mathcal{R}(u^*) \leq& \big(\mathcal{R}(u_\mathcal{F}) - \mathcal{R}(u^*)\big) + \big(\mathcal{R}_n(u_\mathcal{F}) - \mathcal{R}_n(u^*) - \mathcal{R}(u_\mathcal{F}) + \mathcal{R}(u^*)\big) \\
    &+ \big(\mathcal{R}(u_n) - \mathcal{R}(u^*) - \mathcal{R}_n(u_n) + \mathcal{R}_n(u^*)\big).
\end{align*} 
The first term is often referred to as the approximation error, while the third term is known as the estimation error or statistical error. The second term can be bounded by Bernstein's inequality. See \autoref{appsec: Bernstein's bound}. Therefore, it remains to derive upper bounds for the approximation error and the statistical error respectively. In \autoref{section: Approximation Error Analysis}, we will establish an estimation of the approximation error, and in \autoref{section: Statistical Error Analysis}, we will develop an estimation of the statistical error. By combining these two, we can derive a generalization bound for the excess risk $\mathcal{R}(u_n) - \mathcal{R}(u^*)$, and hence, provide the proof for \autoref{main theorem}. A comprehensive proof of \autoref{main theorem} can be found in \autoref{section: Convergence Analysis: Proof of main theorem}.
    
\section{Approximation Error Analysis}\label{section: Approximation Error Analysis}

This section focuses on the estimation of the approximation error. We provide a proof sketch here and see \autoref{appsec: Supplement for approximation error analysis} for details. Recent progress in the approximation theory of deep ReLU CNN \cite{Zhou2020Universality, Zhou2020Theory, Fang2020Theory, Feng2023Generalization} and spherical harmonic analysis \cite{Dai2013Approximation} are important building blocks for our estimates. As referenced in \autoref{Assumption for PDE}, it is assumed that $u^* \in W^{r}_{\infty}(\mathbb{S}^{d-1})$ for some $r \geq s+1$. To establish the generalization error analysis for spherical data classification, \cite{Feng2023Generalization} has obtained an $L^p(\mathbb{S}^{d-1})$ norm approximation rate for functions in $W^r_p (\mathbb{S}^{d-1})$ using CNNs. In our PINN setup, the $L^p(\mathbb{S}^{d-1})$ approximation is insufficient as we require an approximation rate of the $H^s(\mathbb{S}^{d-1})$ norm, as outlined by \eqref{Sobolev norm controls PINN risk}. Moreover, their work only considers a ReLU network, which is suitable for an $L^p(\mathbb{S}^{d-1})$ norm approximation but not applicable here due to the phenomenon of saturation. Hence, we employ and extend their results to the Sobolev norm approximation and our $\mathrm{ReLU}$-$\mathrm{ReLU}^k$ network architectures. The approximator \cite{Feng2023Generalization} is known as the near-best approximation by polynomials. Let $u \in L^p(\mathbb{S}^{d-1})$ where $1 \leq p \leq \infty$. For $n_0 \in \mathbb{N}$, we define the error of the best approximation to $u$ by polynomials of degree at most $n_0$ as 
\[
    E_{n_0}(u)_p = \inf_{v \in \varPi_{n_0}(\mathbb{S}^{d-1})} \|u-v\|_p,
\] 
where $\varPi_{n_0}(\mathbb{S}^{d-1})$ denotes the space of polynomials of degree at most $n_0$. Let $\lambda = \frac{d-2}{2}$, and let $C^\lambda_i(t)$ represent the Gegenbauer polynomial of degree $i$ with parameter $\lambda$. Given a function $\eta \in C^\infty([0,\infty))$ where $\eta(t) = 1$ for $0 \leq t \leq 1$, $0 \leq \eta(t) \leq 1$ for $1 \leq t \leq 2$, and $\eta(t) = 0$ for $ t \geq 2$, we define the kernel $l_{n_0}$ by 
\[
    l_{n_0}(t) = \sum_{i=0}^{2n_0} \eta\bigg(\frac{i}{n_0}\bigg) \frac{\lambda + i}{\lambda}C^\lambda_i(t), \quad t \in [-1,1],
\] 
and the corresponding linear operator 
\[
    L_{n_0}(u)(x) = \frac{1}{\omega_d}\int_{\mathbb{S}^{d-1}} u(y)l_{n_0}(\langle x,y \rangle)d\sigma(y), \quad x \in \mathbb{S}^{d-1}.
\] 
When $d = 2$, $l_{n_0}$ and $L_{n_0}$ are well-defined in the sense of limit, by using the relation 
\[
    \lim_{\lambda \to 0}\frac{1}{\lambda} C^{\lambda}_i(\cos \theta) = \frac{2}{i}\cos i\theta.
\]
This linear operator provides a near-best approximation in the $L^p$ norm. Notice that the construction of the function $\eta$ is not unique, but we can fix a specific $\eta$ beforehand in the following statements, i.e., we can construct a uniform $\eta$ applied to derive all the estimates. 

However, $L_{n_0}(u)$ is not the approximator we ultimately use to estimate the error. We introduce a new kernel, $\widetilde{l}_{n_0}$, and a linear operator, $\widetilde{L}_{n_0}$, defined as follows: 
\begin{equation}\label{integral of linear operator}
    \begin{split}
        \widetilde{l}_{n_0}(t) &= \sum_{i=0}^{2n_0} \bigg[\eta\bigg(\frac{i}{n_0}\bigg) \bigg]^2 
        \frac{\lambda + i}{\lambda}C^\lambda_i(t), \quad t \in [-1,1], \\
        \widetilde{L}_{n_0}(u)(x) &= \frac{1}{\omega_d}\int_{\mathbb{S}^{d-1}} u(y)\widetilde{l}_{n_0}(\langle x,y \rangle)d\sigma(y), 
        \quad x \in \mathbb{S}^{d-1}.
    \end{split}
\end{equation}
Again, when $d=2$, $\widetilde{l}_{n_0}$ and $\widetilde{L}_{n_0}$ are well-defined in the sense of limit $\lambda \to 0$. It should be noted that the function $\eta^2$ still meets all the conditions required above for the function $\eta$. Additionally, $\widetilde{L}_{n_0}(u)$ exhibits near-best approximation properties. We select $\widetilde{L}_{n_0}(u)$ because the integral \eqref{integral of linear operator} can be discretized using a cubature formula, thereby expressing it as an additive ridge function. According to Lemma 3 in \cite{Feng2023Generalization}, we have the following lemma.
\begin{lemma}\label{cubature formula}
    Let $u \in L^p(\mathbb{S}^{d-1})$ for $1 \leq p \leq \infty$. Then there exists a constant $C_2$ only depending on $d$ such that, for all $m \geq C_2n_0^{d-1}$, there exists a cubature rule $\{(\mu_i,y_i)\}_{i=1}^m$ of degree $4n_0$ with $\mu_i \in \mathbb{R}$, $y_i \in \mathbb{S}^{d-1}$
    and we have 
    \[
        \widetilde{L}_{n_0}(u)(x) = \sum_{i=1}^m \mu_i L_{n_0}(u)(y_i) l_{n_0}(\langle x,y_i \rangle).
    \]
\end{lemma}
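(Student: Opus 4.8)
The plan is to recognize $\widetilde l_{n_0}$ as the spherical self-convolution of $l_{n_0}$, use Fubini to rewrite $\widetilde L_{n_0}(u)$ as an integral of a spherical polynomial against the kernel $l_{n_0}$, and then discretize that integral \emph{exactly} by a positive cubature formula of degree $4n_0$. Since this is precisely Lemma~3 of \cite{Feng2023Generalization}, one could simply quote it; below I indicate how the derivation goes.

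Write $G_i(t)=\frac{\lambda+i}{\lambda}C_i^\lambda(t)$, so $l_{n_0}(t)=\sum_{i=0}^{2n_0}\eta(i/n_0)G_i(t)$ and $\widetilde l_{n_0}(t)=\sum_{i=0}^{2n_0}[\eta(i/n_0)]^2 G_i(t)$. The key classical fact is that $z\mapsto G_i(\langle x,z\rangle)$ is, up to normalization, the reproducing kernel of the space $\mathcal H_i$ of degree-$i$ spherical harmonics; equivalently $f\mapsto \frac{1}{\omega_d}\int_{\mathbb{S}^{d-1}}G_i(\langle x,z\rangle)f(z)\,d\sigma(z)$ is the orthogonal projection onto $\mathcal H_i$. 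Since $z\mapsto G_j(\langle z,y\rangle)$ belongs to $\mathcal H_j$, this yields the orthogonality relation $\frac{1}{\omega_d}\int_{\mathbb{S}^{d-1}}G_i(\langle x,z\rangle)G_j(\langle z,y\rangle)\,d\sigma(z)=\delta_{ij}G_i(\langle x,y\rangle)$. Multiplying by $\eta(i/n_0)\eta(j/n_0)$ and summing over $0\le i,j\le 2n_0$ gives
\[
    \frac{1}{\omega_d}\int_{\mathbb{S}^{d-1}} l_{n_0}(\langle x,z\rangle)\,l_{n_0}(\langle z,y\rangle)\,d\sigma(z)=\widetilde l_{n_0}(\langle x,y\rangle),
\]
with the same identity valid for $d=2$ once every term is read through $\lim_{\lambda\to0}\lambda^{-1}C_i^\lambda(\cos\theta)=(2/i)\cos i\theta$, exactly the sense in which $l_{n_0},\widetilde l_{n_0}$ were defined. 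Substituting this into the definition of $\widetilde L_{n_0}(u)$ and interchanging the two integrations (legitimate since $u\in L^p(\mathbb{S}^{d-1})\subset L^1(\mathbb{S}^{d-1})$ and the kernels are bounded) produces
\[
    \widetilde L_{n_0}(u)(x)=\frac{1}{\omega_d}\int_{\mathbb{S}^{d-1}} L_{n_0}(u)(z)\, l_{n_0}(\langle x,z\rangle)\,d\sigma(z).
\]

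It remains to discretize this integral. For fixed $x$ the integrand is a spherical polynomial in $z$ of degree at most $4n_0$: the Gegenbauer expansion of $l_{n_0}$ stops at degree $2n_0$, so $z\mapsto L_{n_0}(u)(z)$ and $z\mapsto l_{n_0}(\langle x,z\rangle)$ both lie in $\varPi_{2n_0}(\mathbb{S}^{d-1})$, and their product lies in $\varPi_{4n_0}(\mathbb{S}^{d-1})$. Invoking the classical existence of positive (signed) cubature formulas on the sphere, there is a constant $C_2=C_2(d)$ such that for every $m\ge C_2 n_0^{d-1}$ there is a rule $\{(\mu_i,y_i)\}_{i=1}^m$ with $y_i\in\mathbb{S}^{d-1}$, $\mu_i\in\mathbb{R}$, exact on $\varPi_{4n_0}(\mathbb{S}^{d-1})$, i.e. $\frac{1}{\omega_d}\int_{\mathbb{S}^{d-1}}P\,d\sigma=\sum_{i=1}^m\mu_i P(y_i)$ for all such $P$ (the required number of nodes is comparable to $\dim\varPi_{4n_0}(\mathbb{S}^{d-1})\asymp n_0^{d-1}$, and any larger node count is realized by appending nodes carrying zero weights). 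Applying this rule to $z\mapsto L_{n_0}(u)(z)l_{n_0}(\langle x,z\rangle)$ gives $\widetilde L_{n_0}(u)(x)=\sum_{i=1}^m\mu_i L_{n_0}(u)(y_i) l_{n_0}(\langle x,y_i\rangle)$, which is the claim. The only delicate points here are bookkeeping ones — carrying the $d=2$ limiting interpretation through every step, and citing a cubature result flexible enough to supply every node count $m\ge C_2 n_0^{d-1}$ — whereas the analytic heart of the argument, the self-convolution identity, is an immediate consequence of the orthogonality of spherical harmonics.
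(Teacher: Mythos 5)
The paper does not prove this lemma itself but simply cites Lemma~3 of \cite{Feng2023Generalization}; your derivation correctly reconstructs the standard argument behind that reference: the self-convolution identity $\frac{1}{\omega_d}\int_{\mathbb{S}^{d-1}} l_{n_0}(\langle x,z\rangle)\, l_{n_0}(\langle z,y\rangle)\, d\sigma(z)=\widetilde l_{n_0}(\langle x,y\rangle)$ via orthogonality of the zonal kernels, Fubini to rewrite $\widetilde L_{n_0}(u)$ as $\frac{1}{\omega_d}\int_{\mathbb{S}^{d-1}} L_{n_0}(u)(z)\, l_{n_0}(\langle x,z\rangle)\, d\sigma(z)$, and exact cubature on $\varPi_{4n_0}(\mathbb{S}^{d-1})$ with $\asymp n_0^{d-1}$ nodes. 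The argument is correct and is essentially the same route the cited source takes; the only caveat worth noting is that the paper later relies on $\mu_i\geq 0$ (e.g., in bounding $\sum_t \mu_t\vert L_{n_0}(u)(y_t)\vert^p$ and using $\sum_t\mu_t=1$), so one should invoke a \emph{positive} cubature theorem (Mhaskar--Narcowich--Ward) rather than a merely signed one, and append nonnegative rather than arbitrary extra weights to reach a prescribed $m$.
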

As a direct consequence, Lemma 3 in \cite{Feng2023Generalization} asserts that for all $r \in \mathbb{N}, 1 \leq p \leq \infty$ and $u \in W^r_p (\mathbb{S}^{d-1})$, one can obtain 
\[
    \bigg\|u - \sum_{i=1}^m \mu_i L_{n_0}(u)(y_i) l_{n_0}(\langle x,y_i \rangle)\bigg\|_p \leq C_3n_0^{-r}\|u\|_{W^r_p (\mathbb{S}^{d-1})}
\]
with a constant $C_3$ only depends on $\eta$ and $d$. This approximation bound can be extended to those with respect to the $W^s_p (\mathbb{S}^{d-1})$ norm with $s \leq r - 1$, as illustrated by Chapter 4 of \cite{Dai2013Approximation}.
\begin{lemma}\label{ridge function approximation}
    If $u \in W^r_p(\mathbb{S}^{d-1})$ for $1 \leq p \leq \infty$ and $r \in \mathbb{N}$, then there exists a constant $C_4$ only depending on $\eta$ and $d$, such that
    \[
        \begin{aligned}
            \bigg\|u - \sum_{i=1}^m \mu_i L_{n_0}(u)(y_i) l_{n_0}(\langle x,y_i \rangle)\bigg\|_{W^s_p(\mathbb{S}^{d-1})} &\leq C_4n_0^{s-r}\|u\|_{W^r_p(\mathbb{S}^{d-1})}, \\
            \bigg\|\sum_{i=1}^m \mu_i L_{n_0}(u)(y_i) l_{n_0}(\langle x,y_i \rangle)\bigg\|_{W^{r}_p(\mathbb{S}^{d-1})} &\leq C_4\|u\|_{W^r_p(\mathbb{S}^{d-1})}.
        \end{aligned}
    \]
\end{lemma}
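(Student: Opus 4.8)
The starting point is \autoref{cubature formula}, which identifies the approximant $\sum_{i=1}^m \mu_i L_{n_0}(u)(y_i)\,l_{n_0}(\langle x,y_i\rangle)$ with $\widetilde{L}_{n_0}(u)$; thus the two claims become
\[
    \|u-\widetilde{L}_{n_0}(u)\|_{W^s_p(\mathbb{S}^{d-1})}\lesssim n_0^{s-r}\|u\|_{W^r_p(\mathbb{S}^{d-1})}
    \quad\text{and}\quad
    \|\widetilde{L}_{n_0}(u)\|_{W^r_p(\mathbb{S}^{d-1})}\lesssim\|u\|_{W^r_p(\mathbb{S}^{d-1})},
\]
with constants depending only on $\eta$ and $d$. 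The plan is to reduce both to the $L^p$ behaviour of $\widetilde{L}_{n_0}$ via the standard description of $W^t_p(\mathbb{S}^{d-1})$, $t\in\mathbb{N}$, through the first-order angular derivatives $D_{i,j}=x_i\partial_j-x_j\partial_i$, namely $\|v\|_{W^t_p(\mathbb{S}^{d-1})}\asymp\|v\|_{L^p(\mathbb{S}^{d-1})}+\sum\|D_{i_1,j_1}\cdots D_{i_t,j_t}v\|_{L^p(\mathbb{S}^{d-1})}$ with the sum over all length-$t$ compositions (Chapters~1 and 2 of \cite{Dai2013Approximation}); I also use the nesting $\|\cdot\|_{W^\ell_p(\mathbb{S}^{d-1})}\lesssim\|\cdot\|_{W^r_p(\mathbb{S}^{d-1})}$ for $\ell\le r$.

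Two ingredients are needed. First, $\widetilde{L}_{n_0}$ commutes with each $D_{i,j}$: since its kernel is zonal, $\widetilde{L}_{n_0}$ acts diagonally on spherical harmonics, $\widetilde{L}_{n_0}=\sum_{i=0}^{2n_0}[\eta(i/n_0)]^2\operatorname{proj}_i$, where $\operatorname{proj}_i$ is the orthogonal projection onto the space $\mathcal{H}_i$ of degree-$i$ spherical harmonics (by the reproducing-kernel/addition formula for spherical harmonics, with the $d=2$ case read in the limit $\lambda\to0$); the $D_{i,j}$ generate rotations, hence commute with $\Delta_0$, hence preserve each $\mathcal{H}_i$, hence commute with every $\operatorname{proj}_i$ and with $\widetilde{L}_{n_0}$, and by iteration $\mathcal{D}\widetilde{L}_{n_0}(v)=\widetilde{L}_{n_0}(\mathcal{D}v)$ for any composition $\mathcal{D}$ of angular derivatives. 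Second, the $L^p$ facts: because $\eta^2\in C^\infty$ equals $1$ on $[0,1]$ and vanishes on $[2,\infty)$, $\widetilde{L}_{n_0}$ is a de la Vall\'ee Poussin--type operator, uniformly bounded on $L^p(\mathbb{S}^{d-1})$ for every $1\le p\le\infty$ and reproducing $\varPi_{n_0}(\mathbb{S}^{d-1})$, so picking a best degree-$n_0$ approximant $P$ of $v$ gives $\|v-\widetilde{L}_{n_0}(v)\|_p=\|(v-P)-\widetilde{L}_{n_0}(v-P)\|_p\lesssim E_{n_0}(v)_p$; combined with the sphere Jackson inequality $E_{n_0}(v)_p\lesssim n_0^{-t}\|v\|_{W^t_p(\mathbb{S}^{d-1})}$ (Chapter~4 of \cite{Dai2013Approximation}) this controls every $L^p$ quantity below.

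Assembling the first estimate: expand $\|u-\widetilde{L}_{n_0}(u)\|_{W^s_p}$ in angular derivatives. The $L^p$ term is $\lesssim E_{n_0}(u)_p\lesssim n_0^{-r}\|u\|_{W^r_p}\le n_0^{s-r}\|u\|_{W^r_p}$, and for a composition $\mathcal{D}$ of length $\ell\le s$ the commutation gives $\mathcal{D}(u-\widetilde{L}_{n_0}(u))=\mathcal{D}u-\widetilde{L}_{n_0}(\mathcal{D}u)$ with $\mathcal{D}u\in W^{r-\ell}_p$, $\|\mathcal{D}u\|_{W^{r-\ell}_p}\lesssim\|u\|_{W^r_p}$, hence $\|\mathcal{D}u-\widetilde{L}_{n_0}(\mathcal{D}u)\|_p\lesssim E_{n_0}(\mathcal{D}u)_p\lesssim n_0^{\ell-r}\|u\|_{W^r_p}\le n_0^{s-r}\|u\|_{W^r_p}$; summing over the finitely many $\mathcal{D}$ finishes it. For the second estimate, expand $\|\widetilde{L}_{n_0}(u)\|_{W^r_p}$: the $L^p$ term is $\lesssim\|u\|_{L^p}$ by uniform boundedness, and for $|\mathcal{D}|=r$ the commutation and uniform boundedness give $\|\mathcal{D}\widetilde{L}_{n_0}(u)\|_p=\|\widetilde{L}_{n_0}(\mathcal{D}u)\|_p\lesssim\|\mathcal{D}u\|_p$; summing gives $\|\widetilde{L}_{n_0}(u)\|_{W^r_p}\lesssim\|u\|_{W^r_p}$.

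The main obstacle is the $L^p$ analysis of the zonal operator $\widetilde{L}_{n_0}$: uniform-in-$n_0$ boundedness on $L^p(\mathbb{S}^{d-1})$ at the endpoints $p=1,\infty$ has to be extracted from the smoothness of $\eta$ via a kernel decay estimate of the shape $|\widetilde{l}_{n_0}(\cos\theta)|\lesssim n_0^{d-1}(1+n_0\theta)^{-N}$, and one must check that the angular-derivative characterization, the nesting of Sobolev spaces, the Jackson inequality, and the commutation all hold with constants depending only on $\eta$ and $d$ (and that the $d=2$ limit $\lambda\to0$ is compatible with each). All of this is available in Chapters~2 and 4 of \cite{Dai2013Approximation}; the one genuinely new element here, compared with the $L^p$ statement already recorded before the lemma, is organizing the commutation so that the rate improves from $n_0^{-r}$ in $L^p$ to $n_0^{s-r}$ in $W^s_p$ while simultaneously retaining the $W^r_p$-stability of the approximant.
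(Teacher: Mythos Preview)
Your proposal is correct and follows essentially the same route as the paper: identify the cubature sum with $\widetilde{L}_{n_0}(u)$, use the commutation $D_{i,j}\widetilde{L}_{n_0}=\widetilde{L}_{n_0}D_{i,j}$, and combine uniform $L^p$-boundedness of the de la Vall\'ee Poussin--type operator with the Jackson estimate to pass from $L^p$ to $W^s_p$. The only cosmetic difference is that the paper works with its specific Sobolev norm $\|v\|_p+\sum_{i<j}\|D^r_{i,j}v\|_p$ and cites Theorem~4.5.5 and Corollary~4.5.6 of \cite{Dai2013Approximation} directly, whereas you expand in mixed compositions of angular derivatives and re-derive the commutation from the spectral decomposition; both are equivalent.
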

\begin{remark}\label{Remark for fractional Sobolev space}
    According to Theorem 4.8.1 in \cite{Dai2013Approximation}, the approximation bound is also applicable to the Lipschitz space $W^{r, \alpha}_p(\mathbb{S}^{d-1})$ for $r \in \mathbb{N}$ and $\alpha \in {[0, 1)}$, as indicated by the following: 
    \[
        E_{n_0}(u)_p \leq C_4n_0^{-r-\alpha}\|u\|_{W^{r, \alpha}_p(\mathbb{S}^{d-1})}.
    \] 
    Thus, we can further extend the conclusion of \autoref{ridge function approximation} to encompass fractional-type Sobolev solutions $u \in W^r_p (\mathbb{S}^{d-1})$ where $r \geq 1$ is a real number.
\end{remark}

Given the cubature rule $\{(\mu_i,y_i)\}_{i=1}^m$ as claimed in \autoref{cubature formula}, convolution layers equipped with a downsampling operator can extract the inner product $\langle x,y_i \rangle$ for all $i=1,\ldots,m$ as representative features. The construction of the convolution layers and the total number of free parameters are given in the proof of Lemma 3 in \cite{Fang2020Theory}. We prove the sup-norm bounds on trainable parameters in \autoref{appsec: Proof of the sup-norm bounds in construction of convolution layers}. Denote the constant $1$ vector in $\mathbb{R}^s$ as ${\bf 1}_s$.

\begin{lemma}\label{construction of convolution layers}
    For $m \in \mathbb{N}$ and a set of cubature samples $y = \{y_1,\ldots,y_m\} \subset \mathbb{S}^{d-1}$, there exists a sequence of convolution kernels $\{w^{(l)}\}_{l=1}^L$ supported on $\{0,\ldots,S^{(l)}-1\}$ of equal size $S^{(l)} = S$ and depth $L \leq \left\lceil \frac{md-1}{S-2} \right\rceil$. Additionally, bias vectors $b^{(l)}$ exist for $l=1,\ldots,L$ such that
    \[
        \mathcal{D}(F^{(L)}(x)) = 
        \begin{bmatrix}
            \langle x , y_1 \rangle \\
            \vdots \\
            \langle x , y_m \rangle \\
            0 \\
            \vdots \\
            0
        \end{bmatrix} 
        + B^{(L)} {\bf 1}_{\left\lfloor \frac{d+L(S-1)}{d}\right\rfloor},
    \]
    where $B^{(L)} = \prod_{l=1}^{L} \| w^{(l)} \|_1$. Further, we have the following sup-norm bounds:
    \begin{align*}
        \|w^{(l)}\|_\infty &\leq C_5, \quad l = 1, \ldots , L , \\
        \|b^{(l)}\|_\infty &\leq  C_6^l, \quad l = 1, \ldots , L , \\
    \end{align*} 
    Here, the constants $C_5$ and $C_6$ only depend on $S$. The total number of free parameters contributed by $\{w^{(l)}\}_{l=1}^L$ and $\{b^{(l)}\}_{l=1}^L$ is $3LS - L$.
\end{lemma}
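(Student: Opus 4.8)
The plan is to realize the linear feature map $x\mapsto(\langle x,y_1\rangle,\ldots,\langle x,y_m\rangle)$ by a single ``big'' convolution and then split that convolution into kernels of size $S$, following the construction behind Lemma~3 of \cite{Fang2020Theory}. After relabelling the coordinates of $\mathbb{R}^d$ and reordering the cubature samples, one encodes the coordinates of the $y_i$ into a sequence $W$ supported on $\{0,\ldots,md-1\}$ whose associated Toeplitz matrix $T_W$ satisfies $(T_W x)_{id}=\langle x,y_i\rangle$ for $i=1,\ldots,m$, with the remaining relevant rows equal to zero, so that after applying $\mathcal D$ one reads off exactly the $m$ inner products followed by zeros. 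Identifying $W$ with the polynomial $\widetilde W(z)=\sum_k W_k z^k$ of degree $md-1$, factoring it over $\mathbb{R}$ into linear and irreducible quadratic factors, and regrouping these into at most $\lceil(md-1)/(S-2)\rceil$ polynomials of degree $\le S-1$, the corresponding coefficient vectors are the kernels $w^{(1)},\ldots,w^{(L)}$, since convolution of sequences corresponds to multiplication of polynomials and hence $T_W=T^{(L)}\cdots T^{(1)}$. This produces the stated depth bound, and the free-parameter count $3LS-L$ is the one recorded in \cite{Fang2020Theory}.

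Next I would absorb the $\mathrm{ReLU}$ nonlinearities and keep track of the additive constant. Since $x$ ranges over the compact sphere with $\|x\|_\infty\le1$, one shows by induction on $l$ that if the bias $b^{(l)}$ is chosen to be a constant vector of size comparable to $\prod_{j\le l}\|w^{(j)}\|_1$, then every coordinate of the pre-activation $T^{(l)}F^{(l-1)}(x)-b^{(l)}$ is nonnegative for all $x\in\mathbb{S}^{d-1}$; thus each $\mathrm{ReLU}$ acts as the identity on the relevant range and $F^{(l)}(x)=T^{(l)}F^{(l-1)}(x)+c^{(l)}$ for a constant vector $c^{(l)}$. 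Propagating these constants through the $L$ layers and applying $\mathcal D$ gives $\mathcal D(F^{(L)}(x))=(\langle x,y_1\rangle,\ldots,\langle x,y_m\rangle,0,\ldots,0)^{\top}+B^{(L)}\mathbf{1}$ with $B^{(L)}=\prod_{l=1}^{L}\|w^{(l)}\|_1$, as asserted. The same induction also gives $\|F^{(l)}(x)\|_\infty\lesssim\prod_{j\le l}\|w^{(j)}\|_1$ uniformly on $\mathbb{S}^{d-1}$, so once the kernels are under control the bias bound is immediate: $\|w^{(l)}\|_1\le S\|w^{(l)}\|_\infty\le SC_5$ yields $\|b^{(l)}\|_\infty\lesssim\prod_{j\le l}\|w^{(j)}\|_1\le(SC_5)^l\le C_6^l$.

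The main obstacle is the uniform kernel bound $\|w^{(l)}\|_\infty\le C_5$, because a polynomial with small coefficients may well have factors with large coefficients. The key point is that each $w^{(l)}$ is the coefficient vector of a real factor of $\widetilde W$ of degree $\le S-1$, whose roots form a subset of the roots of $\widetilde W$; since the entries of $W$ are components of unit vectors one has $\|W\|_\infty\le1$, and by arranging the encoding so that the top and bottom coefficients of $\widetilde W$ are bounded away from $0$, a Cauchy/Lagrange estimate confines the roots of $\widetilde W$ to a bounded disk. Distributing the leading coefficient of $\widetilde W$ over the $L$ factors --- letting one factor carry it and taking the remaining ones monic --- and invoking the elementary bound that a monic real polynomial of degree $\le S-1$ has coefficients bounded by $\binom{S-1}{\lfloor(S-1)/2\rfloor}\prod_i\max(1,|\zeta_i|)$, one gets $\|w^{(l)}\|_\infty\le C_5$ with $C_5$ controlled by $S$. (An alternative that sidesteps root estimates is an explicit shift-and-place routing in which each kernel has at most $S$ nonzero entries, each of which is $0$, $\pm1$, or some $(y_i)_j\in[-1,1]$; this accounts naturally for the displacement $S-2$ per layer and gives $C_5=1$ outright.) The delicate part of the whole argument is precisely the bookkeeping behind ``top and bottom coefficients bounded away from zero'' together with the regrouping of factors so that the per-layer bound is uniform; everything else follows routinely from the two preceding paragraphs.
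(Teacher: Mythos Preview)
Your approach is the paper's approach: encode the $y_i$ into a sequence $W$ supported on $\{0,\ldots,md-1\}$, factor the associated polynomial $W(z)=\sum_k W_kz^k$ over $\mathbb{R}$ and regroup the linear/quadratic factors into kernels of degree $\le S-1$, then take biases of the form $b^{(l)}=(\prod_{i<l}\|w^{(i)}\|_1)\,T^{(l)}\mathbf 1-(\prod_{i\le l}\|w^{(i)}\|_1)\,\mathbf 1$ so that each $\mathrm{ReLU}$ acts as the identity on the range arising from $\mathbb{S}^{d-1}$; the bias bound $\|b^{(l)}\|_\infty\le 2(SC_5)^l$ then follows exactly as you say.

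The only place where the paper is sharper than your proposal is the step you flagged as delicate. Rather than ``arranging the encoding so that the top and bottom coefficients of $\widetilde W$ are bounded away from $0$'', the paper simply applies a rotation to the cubature set so that $y_m=(1,0,\ldots,0)$, which forces the leading coefficient $W_{md-1}=1$ exactly. Cauchy's bound then gives $|z|\le 1+\max_k|W_k/W_{md-1}|\le 2$ for every root of $W$ (all other $W_k$ being coordinates of unit vectors), and with all roots in the disk of radius $2$ each monic factor of degree $\le S-1$ has coefficients bounded by a constant depending only on $S$. No control of the constant term is needed, and no leading coefficient has to be ``distributed'' among the factors.
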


The additive ridge function $\sum_{i=1}^m \mu_i L_{n_0}(u)(y_i) l_{n_0}(\langle x,y_i \rangle)$ is employed to approximate the function $u \in W^r_p (\mathbb{S}^{d-1})$. This approximation requires an intermediary approximation of the univariate function $l_{n_0} : [-1,1] \to \mathbb{R}$. For this purpose, the interpolant of B-spline functions is utilized for the ReLU$^k$ FCNN.

\begin{lemma}\label{construction of FCNN}
    Given the output $\mathcal{D}(F^{(L)}(x))$ in \autoref{construction of convolution layers} from the convolution layers, for $N \in \mathbb{N}$, there exists a ReLU$^k$ FCNN with two hidden layers of width $d_{L + 1} = m(k^3 + 4k^2 + 4Nk + k + 8N)/2$ and $d_{L + 2} = m$ such that the network output
    \begin{equation}\label{FCNN output}
        F^{(L+3)}(x) = \sum_{i=1}^m \mu_i L_{n_0}(u)(y_i) Q_N^{k+1}(l_{n_0})(\langle x,y_i \rangle),
    \end{equation}
    where $Q_N^{k+1}(l_{n_0})$ is an interpolation spline satisfying 
    \begin{equation}\label{interpolation spline error}
        \|D^l(l_{n_0} - Q^{k+1}_N(l_{n_0}))\|_\infty \leq \frac{C_8n_0^{d+2k+1}}{N^{k-l+1}}, \quad l =0, \ldots, k,
    \end{equation}
    where $D^l$ represents the $l$-th derivative, constant $C_8$ only depends on $k$ and $d$.
    Moreover, this FCNN satisfies the following boundedness constraints:
    \[
        \begin{aligned}
            &\|W^{(L+1)}\|_{\max} \lesssim N, && \|b^{(L+1)}\|_\infty \lesssim NC_6^L, \\
            &\|W^{(L+2)}\|_{\max} \leq 5 \cdot 3^{d-2} \cdot (2k+2)^{k+1} n_0^{d-1}, && \|b^{(L+2)}\|_\infty \leq 5 \cdot 3^{d-2} \cdot (2k+2)^{k+1} n_0^{d-1}, \\
            &\|W^{(L+3)}\|_{\max} \leq C_1\|u\|_\infty, && \vert b^{(L+3)} \vert \leq 5 \cdot 3^{d-2} \cdot (2k+2)^{k+1} n_0^{d-1}C_1\|u\|_\infty,
        \end{aligned}
    \]
    where $C_1$ only depends on $\eta$ and $d$, and $C_6$ is from \autoref{construction of convolution layers}.
    The total number of free parameters of this FCNN is $\displaystyle \frac{3}{2} \cdot (k^3 + 4k^2 + 4Nk + k + 8N) + m + 2.$
\end{lemma}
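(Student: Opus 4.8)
The plan is to push the whole construction down to a single \emph{univariate} problem: build a spline $Q^{k+1}_N(l_{n_0})$ that approximates the kernel $l_{n_0}\colon[-1,1]\to\mathbb{R}$ in all derivatives up to order $k$, realize this spline exactly with two fully connected layers when it is fed the scalars $\langle x,y_i\rangle$ (which \autoref{construction of convolution layers} has already produced, up to the additive constant $B^{(L)}$), and then superpose the $m$ copies with weights $\mu_i L_{n_0}(u)(y_i)$. First I would fix a uniform mesh of $[-1,1]$ of size $h\asymp 1/N$ and take $Q^{k+1}_N(l_{n_0})$ to be the order-$(k+1)$ spline interpolating $l_{n_0}$ at the associated knot-averaging nodes; the classical derivative-aware interpolation estimate $\|D^l(g-Q^{k+1}_N g)\|_\infty\lesssim h^{\,k+1-l}\|D^{k+1}g\|_\infty$ for $0\le l\le k$ reduces \eqref{interpolation spline error} to a bound on $\|D^{k+1}l_{n_0}\|_\infty$. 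Since $l_{n_0}$ is a polynomial of degree $\le 2n_0$ assembled from $\eta$-damped Gegenbauer polynomials, the de la Vall\'ee Poussin--type estimate gives $\|l_{n_0}\|_\infty\lesssim n_0^{d-1}$, and $(k+1)$ applications of Markov's inequality yield $\|D^{k+1}l_{n_0}\|_\infty\lesssim n_0^{2(k+1)}\|l_{n_0}\|_\infty\lesssim n_0^{d+2k+1}$; substituting gives exactly \eqref{interpolation spline error} with $C_8=C_8(k,d)$.

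Next I would make the network realization explicit. Expanding $Q^{k+1}_N(l_{n_0})$ in a rescaled B-spline basis, each basis function is, via the standard divided-difference formula, a fixed linear combination of $k+2$ truncated powers $(\tfrac{N}{2}t-c)_+^k=\mathrm{ReLU}^k(\tfrac{N}{2}t-c)$, and there are $\asymp N+k$ such basis functions (boundary B-splines included). Feeding each unit the $i$-th coordinate $\langle x,y_i\rangle+B^{(L)}$ of $\mathcal{D}(F^{(L)}(x))$ and absorbing $\tfrac{N}{2}B^{(L)}$ together with the knot offsets into the bias forces the first-layer magnitudes $\|W^{(L+1)}\|_{\max}\lesssim N$ and $\|b^{(L+1)}\|_\infty\lesssim N+NC_6^L\lesssim NC_6^L$; the careful count of truncated powers (including the extra units needed near the endpoints to reproduce the polynomial part of the spline and to enforce the interpolation conditions) is what produces the stated per-feature width $(k^3+4k^2+4Nk+k+8N)/2$, hence $d_{L+1}=m(k^3+4k^2+4Nk+k+8N)/2$ when the $m$ identical blocks run in parallel. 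The second hidden layer, which by \eqref{FCNN layer} carries the plain $\mathrm{ReLU}$, is used only as a pass-through: for each $i$ it forms $Q^{k+1}_N(l_{n_0})(\langle x,y_i\rangle)+c$ with $c\asymp\max_{|t|\le1}|l_{n_0}(t)|\lesssim n_0^{d-1}$ chosen large enough (this is where the factor $5\cdot3^{d-2}(2k+2)^{k+1}n_0^{d-1}$ enters, bounding both the B-spline coefficients collected into $W^{(L+2)}$ and the lifting constant in $b^{(L+2)}$) that the argument is nonnegative on $\mathbb{S}^{d-1}$, so $\mathrm{ReLU}$ acts as the identity; thus $d_{L+2}=m$. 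The affine output layer then takes $W^{(L+3)}_i=\mu_i L_{n_0}(u)(y_i)$ (with $|W^{(L+3)}_i|\le C_1\|u\|_\infty$, since $\sum_i|\mu_i|\lesssim1$ and $|L_{n_0}(u)(y_i)|\lesssim\|u\|_\infty$ by the bounded $L^1$-norm of the smoothed kernel) and $b^{(L+3)}=c\sum_i\mu_i L_{n_0}(u)(y_i)$, which gives the stated bound on $b^{(L+3)}$ and, after cancellation of $c$, exactly the output \eqref{FCNN output}. Counting weights and biases layer by layer and invoking the parameter sharing across the $m$ identical blocks yields the total $\tfrac32(k^3+4k^2+4Nk+k+8N)+m+2$.

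The main obstacle is this second step together with its accounting: one needs a genuinely \emph{derivative-aware} quantitative B-spline interpolation estimate (not merely an $L^\infty$ one), sharp Markov/Bernstein-type derivative bounds for the specific kernel $l_{n_0}$, and --- most delicately --- a meticulous verification that $(k^3+4k^2+4Nk+k+8N)/2$ many $\mathrm{ReLU}^k$ units per feature suffice to reproduce the order-$(k+1)$ interpolant exactly, while the affine bookkeeping in the first and last layers cleanly removes the spurious constant $B^{(L)}$ inherited from \autoref{construction of convolution layers} and keeps every weight and bias within the claimed magnitude bounds.
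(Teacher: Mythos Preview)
Your plan matches the paper's proof almost step for step: the same quasi-interpolation spline $Q^{k+1}_N$ on a uniform extended partition of $[-1,1]$, the same Markov-inequality route to $\|D^{k+1}l_{n_0}\|_\infty\lesssim n_0^{d+2k+1}$ (the paper passes through the modulus of continuity $\omega(D^kl_{n_0},1/N)$ via Schumaker's Corollary 6.21, but this is equivalent to what you wrote), the same $\mathrm{ReLU}$ pass-through in layer $L+2$ via a lifting constant $\|Q^{k+1}_N(l_{n_0})\|_\infty$, and the same parameter-shared block structure across the $m$ features.

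The one mechanism you leave implicit, and which is the source of the exact width $(k^3+4k^2+4Nk+k+8N)/2$, is the treatment of the \emph{boundary} B-splines. With $k+1$ repeated knots at each endpoint, the explicit formula for $N_i^{k+1}$ when $1\le i\le k$ contains lower-degree truncated powers $(t+1)^{k-j+1}_+$, not only degree-$k$ ones; these cannot be written as a single $\mathrm{ReLU}^k(\text{affine})$ unit. The paper resolves this with a small algebraic lemma: for $0\le l<k$ there exist constants $\zeta_{li},\xi_{li}$ with $x^l=\sum_{i=0}^k\xi_{li}(x+\zeta_{li})^k$ for $x\ge0$, so each $(t+1)^{k-j+1}_+$ costs $k+1$ ReLU$^k$ units. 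Summing the three regimes ($1\le i\le k$, $k+1\le i\le 2N$, $2N+1\le i\le 2N+k$) then yields the stated per-feature count. Once you insert this lemma and do the bookkeeping---absorbing $\sqrt[k]{|\alpha_{ij}\xi|}$ into the first-layer weights, which is also how the paper gets $\|W^{(L+1)}\|_{\max}\lesssim N$---your outline becomes the paper's proof.
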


At the end of this section, we can establish an upper bound for the approximation error $\mathcal{R}(u_\mathcal{F}) - \mathcal{R}(u^*)$ utilizing \eqref{Sobolev norm controls PINN risk}. Recall that $u^*$ denotes the solution of the equation \eqref{PDEs on sphere}.

\begin{theorem}\label{upper bound of the approximation error}
    Assume that \autoref{Assumption for PDE} is satisfied with some $d \geq 2, s \geq 1$ and $r \geq s+1$. For any $0 < \varepsilon < \|u^*\|^2_{W^r_\infty(\mathbb{S}^{d-1})}$, let $\delta = \varepsilon / \|u^*\|^2_{W^r_\infty(\mathbb{S}^{d-1})}$ denote the relative error. If we take the function space of CNN
    \[
        \mathcal{F} = \mathcal{F}(L, L_0 = 2, S, d_{L+1}, d_{L+2}, \mathrm{ReLU}, \mathrm{ReLU}^{k}, M, \mathcal{S})
    \] 
    with 
    \begin{align*}
        &3 \leq S \leq d+1, \quad k \geq s, \quad L \asymp \delta^{-\frac{d-1}{2(r - s)}}, \\
        &d_{L+1} \asymp \delta^{- \frac{d + r + s - 1}{2(r - s)(k - s + 1)} - \frac{d+1}{2(r - s)}}, \quad d_{L+2} \asymp \delta^{-\frac{d-1}{2(r-s)}}, \\
        &M \geq 3C_9\|u^*\|_{W^r_\infty(\mathbb{S}^{d-1})},\\
        &\mathcal{S} \asymp \max\left\{\delta^{- \frac{d + r + s - 1}{2(r-s)(k - s + 1)} - \frac{1}{r-s}}, \delta^{ - \frac{d-1}{2(r-s)}}\right\},
    \end{align*}
    where constant $C_9$ is from \autoref{Sobolev approximation lemma} and only depends on $d, k$ and $s$, then 
    \begin{equation}\label{eq: upper bound of the approximation error}
        \mathcal{R}(u_\mathcal{F}) - \mathcal{R}(u^*) \lesssim \varepsilon,
    \end{equation}
    where $u_\mathcal{F} = \arg \min_{u \in \mathcal{F}} \mathcal{R}(u)$.
\end{theorem}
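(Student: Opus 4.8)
The plan is to reduce the statement to the construction of a single well-chosen network and then to balance the two free scales that control the approximation: the polynomial degree $n_0$ and the spline resolution $N$. Since $\mathbb{S}^{d-1}$ has no boundary, the strong-convexity estimate \eqref{Sobolev norm controls PINN risk} reads $\mathcal{R}(u)-\mathcal{R}(u^*)\lesssim\|u-u^*\|_{H^s(\mathbb{S}^{d-1})}^2$ for every $u$. Taking the infimum over $u\in\mathcal{F}$ and using $\|\cdot\|_{H^s(\mathbb{S}^{d-1})}\lesssim\|\cdot\|_{W^s_\infty(\mathbb{S}^{d-1})}$ (finite surface measure), it suffices to produce one network $\widehat u\in\mathcal{F}$ with $\|\widehat u-u^*\|_{W^s_\infty(\mathbb{S}^{d-1})}\lesssim\sqrt{\varepsilon}=\sqrt{\delta}\,\|u^*\|_{W^r_\infty(\mathbb{S}^{d-1})}$; then $\mathcal{R}(u_\mathcal{F})-\mathcal{R}(u^*)\le\mathcal{R}(\widehat u)-\mathcal{R}(u^*)\lesssim\delta\|u^*\|_{W^r_\infty(\mathbb{S}^{d-1})}^2=\varepsilon$.

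For the construction, fix $n_0,N\in\mathbb{N}$ to be chosen later. Applying \autoref{cubature formula} and \autoref{ridge function approximation} to $u^*\in W^r_\infty(\mathbb{S}^{d-1})$ produces a cubature rule $\{(\mu_i,y_i)\}_{i=1}^m$ with $m\asymp n_0^{d-1}$ and the additive ridge polynomial $g=\sum_{i=1}^m\mu_i L_{n_0}(u^*)(y_i)\,l_{n_0}(\langle\cdot,y_i\rangle)$, which satisfies $\|u^*-g\|_{W^s_\infty}\lesssim n_0^{s-r}\|u^*\|_{W^r_\infty}$ and $\|g\|_{W^s_\infty}\lesssim\|u^*\|_{W^r_\infty}$. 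I then realize the spline surrogate of $g$ as a CNN: the convolution layers of \autoref{construction of convolution layers} (kernel size $S$, depth $L\le\lceil(md-1)/(S-2)\rceil$) output the inner products $\langle x,y_i\rangle$ up to a common additive constant that the FCNN biases absorb, and the two-hidden-layer $\mathrm{ReLU}^k$ FCNN of \autoref{construction of FCNN} (widths $d_{L+1}=m(k^3+4k^2+4Nk+k+8N)/2$, $d_{L+2}=m$) outputs $\widehat u:=F^{(L+3)}=\sum_{i=1}^m\mu_i L_{n_0}(u^*)(y_i)\,Q_N^{k+1}(l_{n_0})(\langle x,y_i\rangle)$ as in \eqref{FCNN output}. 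Its total free-parameter count is $\mathcal{S}\asymp L+N+m$.

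The error estimate is the crux. Split $\|\widehat u-u^*\|_{W^s_\infty}\le\|u^*-g\|_{W^s_\infty}+\|g-\widehat u\|_{W^s_\infty}$; only the second term requires work. Writing $g-\widehat u=\sum_i\mu_i L_{n_0}(u^*)(y_i)\,[l_{n_0}-Q_N^{k+1}(l_{n_0})](\langle\cdot,y_i\rangle)$, I would combine: (i) for $y\in\mathbb{S}^{d-1}$, the covariant derivatives on $\mathbb{S}^{d-1}$ up to order $s$ of the ridge profile $x\mapsto h(\langle x,y\rangle)$ are controlled by $\sum_{l=0}^s\|D^l h\|_{L^\infty[-1,1]}$, with a constant depending only on $d$ and $s$; (ii) the cubature weights obey $\sum_i|\mu_i|\lesssim 1$ while $|L_{n_0}(u^*)(y_i)|\lesssim\|u^*\|_\infty$ (boundedness of the near-best operator on $L^\infty$); and (iii) the interpolation bound \eqref{interpolation spline error}, which for $l=0,\dots,s\le k$ gives $\|D^l(l_{n_0}-Q_N^{k+1}(l_{n_0}))\|_\infty\le C_8 n_0^{d+2k+1}/N^{k-l+1}$, dominated by the term $l=s$. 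Together these yield $\|g-\widehat u\|_{W^s_\infty}\lesssim\|u^*\|_\infty\, n_0^{d+2k+1}N^{-(k-s+1)}$, hence $\|\widehat u-u^*\|_{W^s_\infty}\lesssim\big(n_0^{s-r}+n_0^{d+2k+1}N^{-(k-s+1)}\big)\|u^*\|_{W^r_\infty}$; this combined bound, together with $\|\widehat u\|_{W^s_\infty}\le\|g\|_{W^s_\infty}+\|g-\widehat u\|_{W^s_\infty}\le C_9\|u^*\|_{W^r_\infty}$, is precisely what \autoref{Sobolev approximation lemma} packages. The delicate step is item (i): turning the one-dimensional spline estimate into a genuine $W^s_\infty$ bound on the curved domain via the chain rule for covariant differentiation, while keeping every constant independent of $n_0,N,m$; everything else is careful bookkeeping of the constants emitted by the preceding lemmas.

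It remains to choose the scales and read off the parameters. Taking $n_0\asymp\delta^{-1/(2(r-s))}$ makes $n_0^{s-r}\asymp\sqrt{\delta}$, and taking $N\asymp\delta^{-1/(2(k-s+1))}\,n_0^{(d+2k+1)/(k-s+1)}$ makes $n_0^{d+2k+1}N^{-(k-s+1)}\asymp\sqrt{\delta}$, so $\|\widehat u-u^*\|_{W^s_\infty}\lesssim\sqrt{\delta}\,\|u^*\|_{W^r_\infty}$ and hence $\mathcal{R}(\widehat u)-\mathcal{R}(u^*)\lesssim\varepsilon$. Substituting these $n_0,N$ into $m\asymp n_0^{d-1}$, $L\asymp m$, $d_{L+1}\asymp mN$, $d_{L+2}\asymp m$, and $\mathcal{S}\asymp L+N+m\asymp\max\{m,N\}$, and simplifying the exponents, reproduces exactly the stated orders of $L,d_{L+1},d_{L+2},\mathcal{S}$. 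Finally, since $M\ge 3C_9\|u^*\|_{W^r_\infty}$ (from \autoref{Assumption for CNN}) and $\|\widehat u\|_{W^s_\infty}\le C_9\|u^*\|_{W^r_\infty}$, the network $\widehat u$ satisfies \eqref{sup-norm constraint}, so $\widehat u\in\mathcal{F}$, which legitimizes the bound $\mathcal{R}(u_\mathcal{F})\le\mathcal{R}(\widehat u)$ and establishes \eqref{eq: upper bound of the approximation error}.
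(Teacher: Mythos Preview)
Your proposal is correct and follows essentially the same route as the paper: reduce via \eqref{Sobolev norm controls PINN risk} to constructing a single CNN $\widehat u$ with small $W^s_\infty$-error, build $\widehat u$ from the cubature/ridge approximation of \autoref{cubature formula}--\autoref{construction of FCNN}, split the error as $\|u^*-g\|_{W^s_\infty}+\|g-\widehat u\|_{W^s_\infty}$, bound the second term by the spline estimate \eqref{interpolation spline error} combined with the chain rule for angular derivatives, and then balance $n_0$ and $N$ exactly as you do. The paper packages the central estimate $\|u^*-\widehat u\|_{W^s_\infty}\lesssim n_0^{s-r}\|u^*\|_{W^r_\infty}$ (with $N\asymp n_0^{(d+r+2k-s+1)/(k-s+1)}$) into \autoref{Sobolev approximation lemma} and then simply plugs in $n_0\asymp\delta^{-1/(2(r-s))}$, while you inline the whole computation; the substance, scale choices, and resulting parameter orders are identical.
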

\begin{remark}
    As previously emphasized in \autoref{Remark for fractional Sobolev space}, the bound above is also applicable for $r \in \mathbb{R}$ in the context of fractional Sobolev spaces.
\end{remark}

\section{Statistical Error Analysis}\label{section: Statistical Error Analysis}
This section is dedicated to the estimation of statistical error by establishing an important oracle inequality. This can be done by a novel localization analysis, an estimation of the VC-dimension bounds of our CNN space and a peeling technique. We give key results here and refer the readers to \autoref{appsec: Supplement for statistical error analysis} for details. The localization analysis is developed for the stochastic part of error analysis and presented in \autoref{appsec: Localized complexity analysis}. The VC-dimension bounds of our CNN space and its high-order derivative spaces will turn out to be an upper bound control on the Rademacher complexity of the network. The study by \cite{Harvey2017Nearlytight} substantiates near-precise VC-dimension bounds for piecewise linear neural networks, laying emphasis on feedforward neural networks with ReLU activation functions. Herein, we offer a novel evaluation on the VC-dimension bound for our specific CNN space, which incorporates derivatives and generalizes a bound for CNNs in \cite{Zhou2024learning}. The subsequent theorem, forming a cardinal contribution of this paper, warrants individual consideration owing to its central role in the analysis of stochastic error. Herein, we assume that \autoref{Assumption for PDE} and \autoref{Assumption for CNN} hold with some $s \geq 1, r\geq s+1$ and $M > 0$. For ease of notation, we define the set $\{1,2,\ldots,m\}$, for $m \in \mathbb{N}$, as $[m]$. We also use the notation $\displaystyle D^\alpha \mathcal{F} := \{D^\alpha u:\mathbb{S}^{d-1} \to \mathbb{R} \ \vert \ u \in \mathcal{F}\}.$
\begin{definition}
    Given a function class $\mathcal{G}: \mathcal{X} \to \mathbb{R}$ and a set $X = \{x_i\}_{i=1}^m$ of $m$ points in the input space $\mathcal{X}$. Let $\mathrm{sgn}(\mathcal{G}) = \{\mathrm{sgn}(g): g \in \mathcal{G}\}$ be the set of binary functions $\mathcal{X} \to \{0, 1\}$ induced by $\mathcal{G}$. If $\mathrm{sgn}(\mathcal{G})$ can compute all dichotomies of $X$, that is, $\# \{\mathrm{sgn}(g)\vert_X \in \{0, 1\}^m : g \in \mathcal{G}\} = 2^m,$ 
    we say that $\mathcal{G}$ or $\mathrm{sgn}(\mathcal{G})$ shatters $X$. The Vapnik-Chervonenkis dimension (or VC-dimension) of $\mathcal{G}$ or $\mathrm{sgn}(\mathcal{G})$ is the size of the largest shattered subset of $\mathcal{X}$, denoted by $\mathrm{VCDim}(\mathrm{sgn}(\mathcal{G}))$.
    Moreover, we say that $X$ is pseudo-shattered by $\mathcal{G}$, if there are real numbers $r_1, r_2, \ldots, r_m$, such that for each $v \in \{0, 1\}^m$, there exists a function $g_v \in \mathcal{G}$ with $\mathrm{sgn}(g_v(x_i) - r_i) = v_i$ for $1 \leq i \leq m$. The pseudo-dimension of $\mathcal{G}$, denoted by $\mathrm{PDim}(\mathcal{G})$, is the maximum cardinality of a subset $X$ of $\mathcal{X}$ that is pseudo-shattered by $\mathcal{G}$.
\end{definition}
\begin{theorem}\label{VCDim bound}
    Consider the CNN function space $\mathcal{F}$ defined by \eqref{Assumption for hypothesis space} with hybrid activation functions, i.e., $\sigma^{(l)} = \mathrm{ReLU}$ for $l \neq L+1, \sigma^{(L+1)} = \mathrm{ReLU}^{k}$ with $k \geq s$, which is parameterized by $L, L_0, S^{(l)} \equiv S$ and the total number of free parameters $\mathcal{S}$. Then for all $\vert \alpha \vert \leq s$, it follows that 
    \[
        \mathrm{VCDim}(\mathrm{sgn}(D^\alpha \mathcal{F})) \lesssim \mathcal{S}(L + L_0)\log \bigg(k(L + L_0) \cdot \max\{d_{l}: l \in [L+L_0]\} \bigg).
    \]
\end{theorem}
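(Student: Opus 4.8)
The plan is to adapt the parameter-space partitioning technique used to bound VC-dimensions of piecewise-polynomial networks (in the sharp form of \cite{Harvey2017Nearlytight}, and its CNN version in \cite{Zhou2024learning}) to the derivative classes $D^\alpha\mathcal{F}$ and to the hybrid $\mathrm{ReLU}$/$\mathrm{ReLU}^k$ architecture. Fix $\vert\alpha\vert\le s$ and a candidate set $\{x_i\}_{i=1}^m\subset\mathbb{S}^{d-1}$; we must show that if $\mathrm{sgn}(D^\alpha\mathcal{F})$ shatters it then $m\lesssim \mathcal{S}(L+L_0)\log\big(k(L+L_0)\max\{d_l:l\in[L+L_0]\}\big)$. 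Write $\theta\in\mathbb{R}^{\mathcal{S}}$ for the vector of all free scalars (weights and biases), so that the convolutional weight-sharing is already folded into $\mathcal{S}=\sum_l W_l$, with $W_l$ the number of free parameters of layer $l$. The sup-norm bound \eqref{sup-norm constraint} in the definition \eqref{Assumption for hypothesis space} only restricts $\theta$ to a subset of $\mathbb{R}^{\mathcal{S}}$ and hence can only decrease the VC-dimension; so it suffices to count sign patterns over all $\theta\in\mathbb{R}^{\mathcal{S}}$.

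The first step is structural. For a fixed input $x_i$, consider the finitely many pre-activations $h^{(l)}_j(x_i;\theta)$ over layers $l\le L+L_0$ and units $j$; a choice of sign for each of them determines a cell of $\mathbb{R}^{\mathcal{S}}$, on which every $\mathrm{ReLU}$ node acts as the identity or as $0$ and the single $\mathrm{ReLU}^k$ node of layer $L+1$ acts as $z\mapsto z^k$ or as $0$. Thus on that cell $F^{(L+L_0+1)}_\theta$ coincides, for $x$ near $x_i$, with a genuine polynomial in $(x,\theta)$, and so does its $x$-derivative $D^\alpha F_\theta$; we use this cell-wise polynomial $x$-derivative as the pointwise representative of $D^\alpha u$ (it is consistent with the weak derivative on $\mathcal{F}$, where \eqref{sup-norm constraint} forces the needed regularity, and it is defined off a $\theta$-dependent null set in $x$, so an arbitrarily small perturbation of the $x_i$ makes $\mathrm{sgn}(D^\alpha u(x_i))$ unambiguous). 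Tracking degrees: each $\mathrm{ReLU}$ layer and the final affine layer raise the $\theta$-degree of the unit outputs by at most $1$, whereas passing once through the $\mathrm{ReLU}^k$ layer multiplies it by $k$; hence on every cell all the $h^{(l)}_j(x_i;\cdot)$ and all the $D^\alpha F_\theta(x_i)$ are polynomials in $\theta$ of degree at most $\Delta:=C_0\,k(L+L_0)$ (differentiating in $x$ changes neither the cell decomposition of $\theta$-space nor the $\theta$-degree).

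The second step is the layer-by-layer count. Build partitions $\mathcal{P}_0,\dots,\mathcal{P}_{L+L_0}$ of $\mathbb{R}^{\mathcal{S}}$ such that on each cell of $\mathcal{P}_l$ all outputs of the units of layers $\le l$, evaluated at all $x_i$, are fixed polynomials in $\theta$ of degree $\le\Delta$. Refining $\mathcal{P}_{l-1}$ to $\mathcal{P}_l$ inside a given cell amounts to adjoining the sign conditions of the $\le m\,d_l$ layer-$l$ pre-activations, which are polynomials in $\le\mathcal{S}$ variables of degree $\le\Delta$; by the classical bound on the number of sign patterns of a polynomial system (a family of $q$ polynomials of degree $\le\Delta$ in $N\le q$ variables realizes at most $(C_1 q\Delta/N)^N$ sign vectors, see \cite{Harvey2017Nearlytight}), each cell is split into at most $(C_1 m\,d_l\,\Delta/\mathcal{S})^{\mathcal{S}}$ new ones. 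Hence $\log\vert\mathcal{P}_{L+L_0}\vert\le\sum_{l=1}^{L+L_0}\mathcal{S}\log(C_1 m\,d_l\,\Delta/\mathcal{S})\le(L+L_0)\,\mathcal{S}\,\log(C_1 m\,\Delta\,\max_l d_l)$, and one more application of the same bound to the polynomials $\{D^\alpha F_\theta(x_i)\}_{i=1}^m$ inside each terminal cell multiplies the number of realized sign vectors by at most $(C_1 m\,\Delta/\mathcal{S})^{\mathcal{S}}$. Altogether $\mathrm{sgn}(D^\alpha\mathcal{F})$ realizes at most $\exp\big(C_2(L+L_0)\,\mathcal{S}\,\log(m\,k(L+L_0)\max_l d_l)\big)$ dichotomies of $\{x_i\}$.

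The last step is elementary: shattering forces $2^m$ to be at most this quantity, i.e.\ $m\le C_2(L+L_0)\mathcal{S}\log(m\,k(L+L_0)\max_l d_l)$, and the standard fact that $t\le a\log(bt)$ with $a,b$ bounded below by an absolute constant entails $t\lesssim a\log(ab)$, together with absorbing $\log((L+L_0)\mathcal{S})$ into $\log(k(L+L_0)\max_l d_l)$ (legitimate since $\mathcal{S}$ is polynomial in $L,L_0,\max_l d_l$ and $S\le d+1\le\max_l d_l+1$), yields the asserted bound. I expect the main obstacle to be the structural analysis of the first step: verifying that $D^\alpha F_\theta$ is piecewise polynomial with exactly the cell decomposition induced by the pre-activation signs, so that the layer-by-layer count of the second step carries over verbatim, and that the lone $\mathrm{ReLU}^k$ layer inflates the degree by only the single factor $k$; the measure-zero care needed to make $\mathrm{sgn}(D^\alpha u(x_i))$ well-defined on the shattered set is a further, milder point.
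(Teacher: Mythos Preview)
Your proposal is correct and follows essentially the same approach as the paper: layer-by-layer partitioning of parameter space by pre-activation signs, then the polynomial sign-pattern count (\autoref{lemma: number of all sign vectors}), yielding the same degree bound $\Delta\asymp k(L+L_0)$ and the same final inequality. The only cosmetic difference is that the paper justifies the piecewise-polynomial structure of $D^\alpha F_\theta$ on the same partition by deriving an explicit chain-rule formula for $D^\alpha f$ (equation \eqref{higher derivative of network}) via induction on $\vert\alpha\vert$, whereas you argue it structurally from the local polynomiality of $F_\theta$ in $(x,\theta)$ on each cell.
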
 

Relying on the VC dimension estimates given in \autoref{VCDim bound}, we can utilize a peeling technique to demonstrate an important oracle inequality, which presents an upper bound for statistical error. Extensively studied within the realm of nonparametric statistics, oracle-type inequalities provide valuable insights (see \cite{Johnstone1998Oracle} and references therein). In the field of nonparametric statistics, an oracle inequality outlines bounds of the risk associated with an estimator. This inequality offers an asymptotic guarantee for the performance of an estimator by contrasting it with an oracle procedure that possesses knowledge of some unobservable functions or parameters within the model. \autoref{oracle inequality} establishes an oracle inequality for our PINN model. 
\begin{theorem}\label{oracle inequality}
    For all $t \geq 0$ and $\displaystyle n \geq \max_{\vert \alpha \vert \leq s} \mathrm{PDim}(D^\alpha \mathcal{F}) \vee \frac{8e^2M^2}{C_{11} + 3}$, with probability at least $1 - \exp(-t)$ we have 
    \[
        \mathcal{R}(u_n) - \mathcal{R}(u^*) \lesssim \mathcal{R}(u_\mathcal{F}) - \mathcal{R}(u^*) + \frac{\mathrm{VC}_{\mathcal{F}}}{n} \log n + \frac{t}{n} + \frac{\exp(-t)}{\log(n / \log n)}.
    \]
    Here, $C_{11} > 0$ is a constant satisfying the estimate \eqref{H^s estimates}:
    \[
        \|u - u^*\|^2_{H^s(\mathbb{S}^{d-1})} \leq C_{11} \|\mathcal{L}(u - u^*)\|^2_{L^2(\mathbb{S}^{d-1})},
    \] 
    and the notation
    \begin{equation}\label{VC-dimension of F}
        \mathrm{VC}_{\mathcal{F}} := \mathcal{S}(L + L_0)\log \bigg(k(L + L_0) \cdot \max\{d_{l}: l \in [L+L_0]\} \bigg).
    \end{equation}
    is introduced for convenience, given the recurring usage of this VC-dimension bound.
\end{theorem}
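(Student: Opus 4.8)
I would run the standard three–term error decomposition already displayed in \autoref{subsection: Error Decomposition}, control the single–function term by Bernstein, and control the uniform (class) term by feeding the VC–dimension estimate of \autoref{VCDim bound} into the localized complexity machinery of \autoref{appsec: Localized complexity analysis}. Throughout, write the pointwise excess loss as $g_u(x):=|(\mathcal{L}(u-u^*))(x)|^2\ge 0$ (using $\mathcal{L}u^*=f$), so that $\mathcal{R}(u)-\mathcal{R}(u^*)=\mathbb{E}g_u$, $\mathcal{R}_n(u)-\mathcal{R}_n(u^*)=\frac1n\sum_{i=1}^n g_u(X_i)$, and the ERM optimality $\mathcal{R}_n(u_n)\le\mathcal{R}_n(u_\mathcal{F})$ gives
\[
    \mathcal{R}(u_n)-\mathcal{R}(u^*)\le \big(\mathcal{R}(u_\mathcal{F})-\mathcal{R}(u^*)\big)+\Big(\tfrac1n\textstyle\sum_i g_{u_\mathcal{F}}(X_i)-\mathbb{E}g_{u_\mathcal{F}}\Big)+\Big(\mathbb{E}g_{u_n}-\tfrac1n\textstyle\sum_i g_{u_n}(X_i)\Big).
\]

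\textbf{Step 1: the Bernstein / variance–mean condition.} The constraint \eqref{sup-norm constraint} gives $\|D^\alpha u\|_\infty\le M$ on $\mathbb{S}^{d-1}$ for $|\alpha|\le s$, while \eqref{sup-norm constraint for u^*} together with $u^*\in W^r_\infty(\mathbb{S}^{d-1})$ gives $\|D^\alpha u^*\|_\infty\lesssim M$; since each $a_\alpha\in L^\infty$ and $f\in L^\infty$, we obtain $\|\mathcal{L}(u-u^*)\|_\infty\lesssim M$, hence $0\le g_u\lesssim M^2$ and, crucially, $\mathbb{E}g_u^2\le\|\mathcal{L}(u-u^*)\|_\infty^2\,\mathbb{E}g_u\lesssim M^2\,(\mathcal{R}(u)-\mathcal{R}(u^*))$. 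This variance bound is exactly what both the Bernstein step and the localization step need; it is also where the threshold $n\ge 8e^2M^2/(C_{11}+3)$ is consumed (to keep the localization radius in the admissible regime, with $C_{11}$ the strong–convexity constant from \eqref{H^s estimates}). Applying Bernstein to the single nonnegative bounded variable $g_{u_\mathcal{F}}$ and then AM–GM on the resulting $\sqrt{\,\cdot\,}$ term bounds the second summand, with probability $\ge 1-e^{-t}$, by $\tfrac12(\mathcal{R}(u_\mathcal{F})-\mathcal{R}(u^*))+c\,t/n$.

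\textbf{Step 2: localization for the class term.} Put $\mathcal{G}=\{g_u:u\in\mathcal{F}\}$. With the variance–mean condition of Step~1 in force, the localized complexity bound of \autoref{appsec: Localized complexity analysis}, run through a peeling over the dyadic scales $r^*,2r^*,4r^*,\dots,\lesssim M^2$ of $\mathbb{E}g_u$, yields with probability $\ge 1-e^{-t}$ a bound on the third summand of the form $\tfrac12(\mathcal{R}(u_n)-\mathcal{R}(u^*))+C\big(r^*+\tfrac tn+\tfrac{e^{-t}}{\log(n/\log n)}\big)$, where $r^*$ is the fixed point of the sub–root modulus $r\mapsto \mathfrak{R}_n\big(\{g_u:\mathbb{E}g_u^2\le r\}\big)$; the number $\asymp\log(n/\log n)$ of dyadic shells is what produces the last residual term. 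Substituting this and Step~1 into the decomposition and absorbing the two $\tfrac12(\mathcal{R}(u_n)-\mathcal{R}(u^*))$ contributions to the left-hand side reduces the theorem to showing $r^*\lesssim \mathrm{VC}_\mathcal{F}\log n/n$.

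\textbf{Step 3: bounding $r^*$ via VC–dimension, and the main obstacle.} Write $g_u(x)=\phi_x\big((D^\alpha u(x))_{|\alpha|\le s}\big)$ with $\phi_x(z)=|\sum_{|\alpha|\le s}a_\alpha(x)z_\alpha-f(x)|^2$, which on the relevant bounded range is Lipschitz with constant $\lesssim M$; a vector–contraction argument then reduces $\mathfrak{R}_n(\mathcal{G})$ restricted to a variance ball to the empirical Rademacher complexities of the $\binom{d+s}{s}$ derivative classes $D^\alpha\mathcal{F}$, $|\alpha|\le s$. Each of these has pseudo-dimension $\mathrm{PDim}(D^\alpha\mathcal{F})\lesssim\mathrm{VC}_\mathcal{F}$ by \autoref{VCDim bound} (adding the single threshold parameter does not change the order), so a Haussler-type covering bound plus a Dudley entropy integral, carried out on the localized ball, gives local complexity $\lesssim\sqrt{r\,\mathrm{VC}_\mathcal{F}\log n/n}$ and hence $r^*\asymp\mathrm{VC}_\mathcal{F}\log n/n$. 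I expect this last step to be the crux: one must (a) justify the contraction with the correct $M$-dependent Lipschitz constant while preserving nonnegativity/boundedness of $g_u$, (b) handle all derivative classes simultaneously through the single VC estimate even though they share the convolutional parameters, and (c) ensure the chaining costs only one factor of $\log n$ so the final rate is $\mathrm{VC}_\mathcal{F}\log n/n$; verifying that the sub–root fixed point has precisely this order and that the coefficient in front of $\mathcal{R}(u_n)-\mathcal{R}(u^*)$ can be pushed below $1$ is the delicate bookkeeping. Collecting Steps~1–3 and renaming constants gives the stated oracle inequality.
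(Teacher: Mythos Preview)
Your proposal is correct and follows essentially the same route as the paper: the three-term decomposition, Bernstein on $g_{u_{\mathcal F}}$, peeling over dyadic excess-risk shells for the class term, contraction from $\mathcal G_\gamma$ to the derivative classes $D^\alpha\widetilde{\mathcal F}_\gamma$, and Dudley chaining fed by the VC bound of \autoref{VCDim bound} to pin down the fixed point $r^*\asymp \mathrm{VC}_{\mathcal F}\log n/n$; your explanation of the residual $e^{-t}/\log(n/\log n)$ via the number of shells is also exactly what the paper does.

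The one step you underemphasize is that the variance--mean relation $\mathbb E g_u^2\lesssim M^2\,\mathbb E g_u$ is what Talagrand/Bernstein consumes, but it does \emph{not} by itself localize the derivative classes after contraction. To run Dudley on $\{D^\alpha(u-u^*):\mathbb E g_u\le\gamma\}$ and extract the factor $\sqrt{\gamma}$ you need an \emph{empirical} $L^2$ radius on $D^\alpha(u-u^*)$, and this is precisely where the strong convexity \eqref{H^s estimates} is used: $\mathbb E g_u\le\gamma$ gives $\|D^\alpha(u-u^*)\|_{L^2}^2\le C_{11}\gamma$ in population, and a separate concentration argument (the paper's \autoref{upper bound of the empirical norm}) transfers this to $\|D^\alpha(u-u^*)\|_{L^2(\mathbf x)}^2\le (C_{11}+3)\gamma$, which then produces both the local complexity $\sqrt{\gamma\,\mathrm{VC}_{\mathcal F}\log n/n}$ and the threshold $n\ge 8e^2M^2/(C_{11}+3)$. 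You have all the ingredients, but in your write-up this bridge should be made explicit rather than folded into ``the localization step''.
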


\section{Convergence Analysis: Proof of \autoref{main theorem}}\label{section: Convergence Analysis: Proof of main theorem}

In this section, we concentrate on proving \autoref{main theorem} in order to finalize the convergence analysis of our PICNN model. The combination of the approximation bound (\autoref{upper bound of the approximation error}) and the oracle inequality (\autoref{oracle inequality}) allows us to infer fast convergence rates for PICNN applied to sphere PDEs.

\begin{proof}[Proof of \autoref{main theorem}]
    We assume $\varepsilon > 0, t > 0$, with their exact values to be specified later, and we denote the relative error as $\delta = \varepsilon/\|u\|^2_{W^r_\infty(\mathbb{S}^{d-1})}$. If we take the CNN hypothesis space as 
    \[
        \mathcal{F} = \mathcal{F}(L, L_0 = 2, S, d_{L+1}, d_{L+2}, \mathrm{ReLU}, \mathrm{ReLU}^{k}, M, \mathcal{S})
    \] 
    with
    \begin{align*}
        &3 \leq S \leq d+1, \quad k \geq s, \quad L \asymp \delta^{-\frac{d-1}{2(r - s)}}, \\
        &d_{L+1} \asymp \delta^{- \frac{d + r + s - 1}{2(r - s)(k - s + 1)} - \frac{d+1}{2(r - s)}}, \quad d_{L+2} \asymp \delta^{-\frac{d-1}{2(r-s)}}, \\
        &M \geq 3C_9\|u^*\|_{W^r_\infty(\mathbb{S}^{d-1})}, \\
        &\mathcal{S} \asymp\max\left\{\delta^{- \frac{d + r + s - 1}{2(r-s)(k - s + 1)} - \frac{1}{r-s}}, \delta^{ - \frac{d-1}{2(r-s)}}\right\},
    \end{align*}
    where constant $C_9$ is from \autoref{Sobolev approximation lemma}. Then by \eqref{eq: upper bound of the approximation error}, we have 
    \[
        \mathcal{R}(u_\mathcal{F}) - \mathcal{R}(u^*) \lesssim \varepsilon.
    \]
    By \eqref{VC-dimension of F} one can calculate that 
    \begin{align*}
        \mathrm{VC}_{\mathcal{F}}   &\lesssim \mathcal{S}(L + L_0)\log \bigg(k(L + L_0) \cdot \max\{d_{l}: l \in [L+L_0]\} \bigg) \\
                                    &\asymp \mathcal{S}L \log (kLd_{L+1}) \\
                                    &\asymp \max\left\{\delta^{-\frac{d + r + s - 1}{2(r-s)(k - s + 1)} - \frac{d + 1}{2(r-s)}}, \delta^{-\frac{d-1}{r-s}}\right\} \cdot \log(\delta^{-1})
    \end{align*}

    If $r < \infty$ and $d > 3$, we can balance the terms in $\mathrm{VC}_{\mathcal{F}}$ by choosing  
    \[
        k = s + \left\lceil \frac{r + s + 2}{d - 3} \right\rceil \geq s + 1,
    \]
    then 
    \[
        \mathrm{VC}_{\mathcal{F}} \lesssim \delta^{-\frac{d-1}{r-s}} \log\delta^{-1}.
    \]
    By \autoref{oracle inequality}, with probability at least $1 - \exp(-t)$,
    \begin{align*}
        &\mathcal{R}(u_n) - \mathcal{R}(u^*) \\
        \lesssim{}& \mathcal{R}(u_\mathcal{F}) - \mathcal{R}(u^*) + \frac{\mathrm{VC}_{\mathcal{F}}}{n} \log n + \frac{t}{n} + \frac{\exp(-t)}{\log(n / \log n)} \\
        \lesssim{}& \varepsilon + \frac{\log n}{n} \cdot \varepsilon^{- \frac{d-1}{r-s}}(- \log \varepsilon) + \frac{t}{n} + \frac{\exp(-t)}{\log(n / \log n)}.
    \end{align*}
    The trade-off between the first and second term implies that we can choose 
    \[
        \varepsilon = n^{-a} (\log n)^{2a}, \quad a = \frac{r-s}{(r-s) + (d - 1)},
    \]
    then with probability at least $1 - \exp(-t)$,
    \[
        \mathcal{R}(u_n) - \mathcal{R}(u^*) \lesssim n^{-a}(\log n )^{2a} + \frac{t}{n} + \frac{\exp(-t)}{\log(n / \log n)}.
    \]
    To specify $t$, the trade-off between the first and second term implies that we can choose $t = n^{1-a} (\log n)^{2a}$, then 
    with probability at least $1 - \exp(-n^{1-a} (\log n)^{2a})$,
    \[
        \mathcal{R}(u_n) - \mathcal{R}(u^*) \lesssim n^{-a}(\log n )^{2a}.
    \]

    If $r < \infty$ and $2 \leq d \leq 3$, the first term in $\mathrm{VC}_{\mathcal{F}}$ is always dominant. Then
    \[
        \mathrm{VC}_{\mathcal{F}} \lesssim\delta^{-\frac{d + r + s - 1}{2(r-s)(k - s + 1)} - \frac{d+1}{2(r-s)}} \log\delta^{-1}.
    \]
    Let 
    \[
        b = 1 - \frac{d(k - s + 2) + r + k}{d(k - s + 2) + 2(r - s)(k - s + 1) + r + k}.
    \]
    By a similar argument, we can prove with probability at least $1 - \exp(-n^{1-b} (\log n)^{2b})$,
    \[
        \mathcal{R}(u_n) - \mathcal{R}(u^*) \lesssim n^{-b}(\log n )^{2b}.
    \]

    If $r = \infty$, then for all fixed $k \geq s$,
    \[
        \mathrm{VC}_{\mathcal{F}} \lesssim \delta^{-\frac{1}{2(k - s + 1)}} \log\delta^{-1}.
    \]
    Let 
    \[
        c = 1 - \frac{1}{2(k - s) + 3}.
    \]
    By a similar argument, we can prove with probability at least $1 - \exp(-n^{1-c} (\log n)^{2c})$,
    \[
        \mathcal{R}(u_n) - \mathcal{R}(u^*) \lesssim n^{-c}(\log n )^{2c}.
    \]
    By the $H^s(\mathbb{S}^{d-1})$ estimates \eqref{H^s estimates}, there holds
    \[
        \|u_n - u^*\|^2_{H^s(\mathbb{S}^{d-1})} \lesssim \mathcal{R}(u_n) - \mathcal{R}(u^*).
    \]
    Then we have finished the proof.
\end{proof}

\begin{remark}

The polynomial convergence rates, established by \autoref{main theorem}, is of the form $n^{-a}(\log n)^{2a}$ where the index $a$ is given by
\begin{equation}\label{the learning rates of different cases}
    a = 
    \left\{
    \begin{aligned}
        &\frac{r-s}{(r-s) + (d-1)}, &&\text{ if } r < \infty, d > 3, k = s + \left\lceil \frac{r + s + 2}{d - 3} \right\rceil,\\
        &1 - \frac{d(k - s + 2) + r + k}{d(k - s + 2) + 2(r - s)(k - s + 1) + r + k}, &&\text{ if } r < \infty, 2 \leq d \leq 3, k \geq s, \\
        &1 - \frac{1}{2(k - s) + 3}, &&\text{ if } r = \infty, d \geq 2, k \geq s.
    \end{aligned}
    \right.
\end{equation} 
Generally, a bigger $k$ can lead to a faster convergence rate as it can reduce the term  
\[
    \delta^{-\frac{d + r + s - 1}{2(r-s)(k - s + 1)} - \frac{d + 1}{2(r-s)}},
\] 
which is a key factor in the VC-dimension estimation. Particularly, when $r < \infty, 2 \leq d \leq 3$ or $r = \infty, d \geq 2$, this term dominates the VC-dimension, as
\[
    \delta^{-\frac{d + r + s - 1}{2(r-s)(k - s + 1)} - \frac{d + 1}{2(r-s)}} \geq \delta^{-\frac{d-1}{r-s}}.
\] 
Consequently, in such conditions, a larger $k$ can potentially yield a faster convergence rate theoretically. However, the practical application of $\mathrm{ReLU}^k$ activation functions with a larger $k$  may introduce complications in optimization due to the gradient explosion phenomenon, as observed in our experimental analysis.
\end{remark}

\section{Experiments}\label{Section: Experiments}

In this section, we conduct numerical experiments to verify our theoretical findings and shed light on the conditions that save the algorithm from the curse of dimensionality. As per \autoref{main theorem}, the convergence rate of the PICNN depends on the order $s$ of the PDE, the smoothness $r$ of the solution $u^*$, the dimension $d$, and the activation degrees $k$ of $\mathrm{ReLU}^k$. If the solution is smooth, the convergence rate becomes independent of $d$, indicating that the PICNN can circumvent the curse of dimensionality.

In this section, we consider \autoref{Example: The static Schrödinger equation} with $V(x) \equiv 1$ and different $f$:
\begin{equation}\label{pde of experiment}
    -\Delta_{0} u(x) + u(x) = f(x)  \quad x \in \mathbb{S}^{d-1}.
\end{equation}
After generating $n$ training points $\{x^i\}_{i=1}^n \subset \mathbb{S}^{d-1}$ with random uniform sampling, we can compute the loss function as follows:
\[
    \mathcal{R}_n(u) = \frac{1}{n} \sum_{i=1}^{n} \vert \Delta_0 u(x^i) - u(x^i) + f(x^i) \vert^2,
\]
where $\Delta_0$ is substituted by the right hand of the equality \eqref{the Laplace-Beltrami operator in xyz coordinates}. Note that all the derivatives can be computed by autograd in PyTorch. 

Our analysis uses the $\mathrm{ReLU}$-$\mathrm{ReLU}^k$ network, but in our preliminary trials, we find it difficult to train due to the singularity of the derivative of $\mathrm{ReLU}$ at $x = 0$. Hence, we opt to use $\mathrm{GeLU}$-$\mathrm{GeLU}^k$ as a substitute. $\mathrm{GeLU}$ is a smooth substitute of $\mathrm{ReLU}$ and it was first introduced in \cite{Hendrycks2016Gaussian} by combining the properties from dropout, zoneout and $\mathrm{ReLU}$. 

\autoref{main theorem} suggests that as the training data increase, one should choose a more complex CNN architecture to accommodate the dataset and achieve a promised convergence rate. Additionally, different values for $d, s, r$ and $k$ result in various recommended architectures. However, it is challenging to optimize deep and wide CNNs, as they usually require numerous training epochs to reach loss convergence. In addition, for networks of varying scales, other hyper-parameters such as the step size of gradient descent, batch size and number of epochs are also difficult to select, as our theory only covers the generalization analysis but lacks a discussion on optimization. To ensure that all the tests can be easily implemented and obtain unified and comparable results, we fix the CNN and hyper-parameters setting as 
\begin{itemize}
    \item CNN architecture parameters: $S = 3, k = 3, L = 3, L_0 = 2, d_1 = 12, d_2 = 4$ and $\mathrm{GeLU}$-$\mathrm{GeLU}^3$ as activation functions.
    We also use $56$ channels in each convolution layer to increase the network expressivity power.
    \item Optimizer parameters: Adam \cite{Kingma2015Adam} with sizes $10^{-3}$ or $10^{-4}$, $1/128$ of the training size as batch size and $100$ training epochs. 
    Hence every training process takes 12800 parameter iterations.
    \item Data size: $5120$ random points for test size and the train size varies from $128$ and up to $2^{18} = 262144$. 
    According to the definition of the minimizer \eqref{ERM of PINN}, no validation set is used and we output the best model with the minimum training loss.
\end{itemize}

We train the network using the PICNN method on various sizes of sampled data points and we plot the relative test PINN loss against the training size on the log-log scale. All the experiments are conducted using PyTorch in Python. As mentioned in \autoref{main theorem}, the PINN loss decreases polynomially fast. Therefore, the scattered points are expected to align on a line, with the estimated slope approximately representing the convergence rate.

\subsection{Smooth Solution on 2-D Sphere}\label{section: Smooth Solution on 2-D Sphere}
We first consider a smooth solution on the 2-D sphere as a toy example. Let $u^*(x_1, x_2, x_3) = x_1x_2x_3$. By \eqref{the Laplace-Beltrami operator in xyz coordinates} and 
a straight calculation, $f = 13x_1x_2x_3$. The empirical PINN risk or we can say the train PINN loss, can be calculated as 
\[
    \mathcal{R}_n(u) = \frac{1}{n} \sum_{i=1}^n \vert \Delta_0 u(x^i_1, x^i_2, x^i_3) - u(x^i_1, x^i_2, x^i_3) + 13x^i_1x^i_2x^i_3 \vert^2.
\]
Each $x^i \in \mathbb{S}^2$ represents a training point. 

\begin{figure}[htbp]
    \centering
    \includegraphics[width=0.6\textwidth]{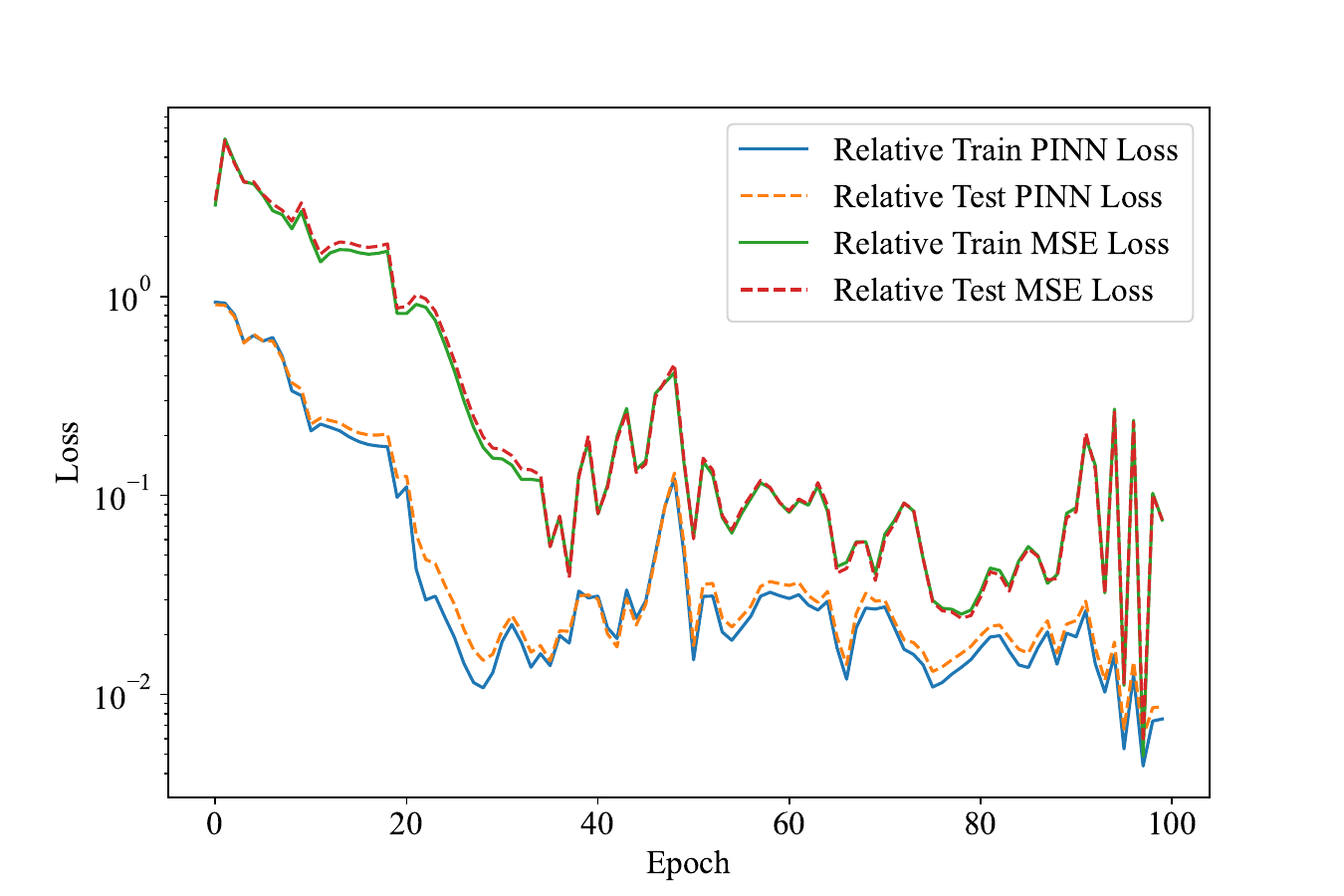}
    \caption{The relative loss history of PICNN model for $u^* = x_1x_2x_3$ on 2-D sphere. Each epoch contains $128$ training iterations.}
    \label{fig:Ex1_LossHist}
\end{figure}
\begin{figure}[htbp]
    \centering
    \includegraphics[width=0.8\textwidth]{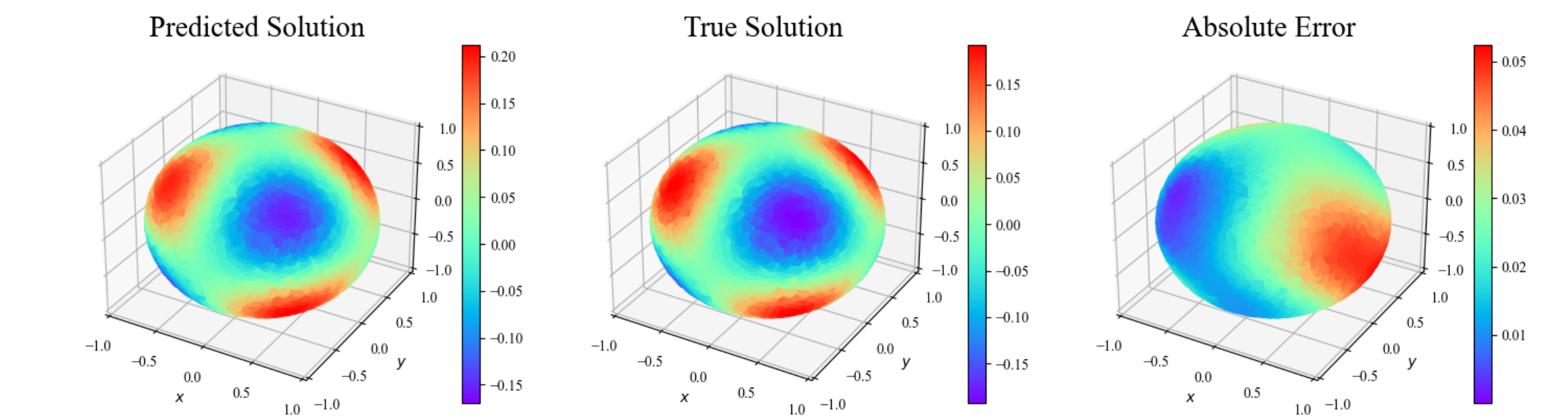}
    \caption{Scatter plots on the test points for $u^* = x_1x_2x_3$ on 2-D sphere.
    From left to right, we show the predicted solution given by the PICNN, the true solution $u^*$ and the absolute error.}
    \label{fig:Ex1_Err}
\end{figure}
\begin{figure}[htbp]
    \centering
    \includegraphics[width=0.6\textwidth]{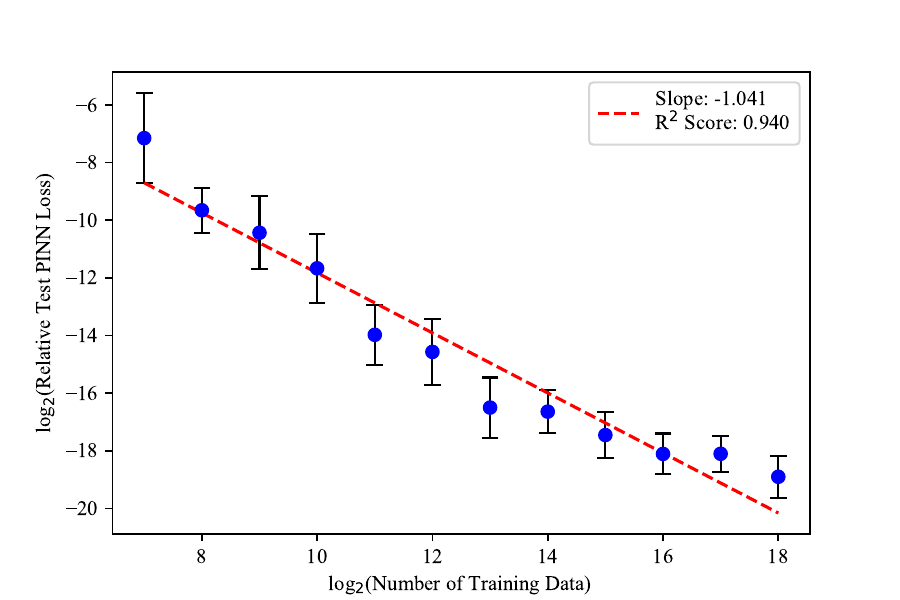}
    \caption{The log-log plot with an estimated convergence slope for $u^* = x_1x_2x_3$ on 2-D sphere. Each point shows the average over $10$ replicates.}
    \label{fig:Ex1_Loss}
\end{figure}

We first show that our PICNN model can learn the solution well. \autoref{fig:Ex1_LossHist} and \autoref{fig:Ex1_Err} illustrate the loss history and the absolute error of a trial with $5120$ test points but only $256$ train points.
These four types of relative losses appeared in \autoref{fig:Ex1_LossHist} is defined as 
\[
    \textrm{Relative Train PINN Loss} = \frac{\frac{1}{n} \sum_{i=1}^n \vert \Delta_0 u(x^i) - u(x^i) + f(x^i) \vert^2}{\frac{1}{n} \sum_{i=1}^n \vert f(x^i) \vert^2},
\]
\[
    \textrm{Relative Test PINN Loss} = \frac{\frac{1}{m} \sum_{i=1}^m \vert \Delta_0 u(y^i) - u(y^i) + f(y^i) \vert^2}{\frac{1}{m} \sum_{i=1}^m \vert f(y^i) \vert^2},
\]
\[
    \textrm{Relative Train MSE Loss} = \frac{\frac{1}{n} \sum_{i=1}^n \vert u(x^i) - u^*(x^i)\vert^2}{\frac{1}{n} \sum_{i=1}^n \vert u^*(x^i) \vert^2},
\]
\[
    \textrm{Relative Test MSE Loss} = \frac{\frac{1}{m} \sum_{i=1}^m \vert u(y^i) - u^*(y^i)  \vert^2}{\frac{1}{m} \sum_{i=1}^m \vert u^*(y^i) \vert^2},
\] 
where $\{x^i\}_{i=1}^n$ is the train set and $\{y^i\}_{i=1}^m$ is the test set. We observe that the test loss and train loss share a very similar value with a small gap, indicating a strong generalization ability of our model. Besides, the PINN loss and MSE loss follow a similar trend, which is a practical manifestation of the strong convexity of PINN risk. This emphasizes that the PINN model can learn the true solution accurately using the information from the physical equation.

To validate our theoretical prediction of the fast polynomial convergence rate, we use a log-log plot to visualize the decreasing trend of the PINN loss in \autoref{fig:Ex1_Loss}, with an estimated linear regression slope and the $\mathrm{R}^2$ score. Each point represents the average log PINN loss from $10$ independent training trials, with the standard deviation also shown. For a smooth solution, \autoref{main theorem} suggests a decreases slope $0.8$ for $k = 3$ and as $k$ increases, the best slope we can expect is $1$, which is discovered in our practical test.

\subsection{Influence of Smoothness}\label{section: Influence of Smoothness}
We now consider other solutions with less smoothness order on the 2-D sphere. We construct
\begin{align*}
    u^*(x_1,x_2,x_3) =& \sum_{i=1}^3 \mathrm{ReLU}^r\left(\frac{1}{2} + \frac{x_i}{r}\right) + \sum_{i=1}^3 \mathrm{ReLU}^r\left(\frac{1}{2} - \frac{x_i}{r}\right), \\
    f(x_1,x_2,x_3) =& \sum_{i=1}^3 \bigg\{\mathrm{ReLU}^r\left(\frac{1}{2} + \frac{x_i}{r}\right) + \mathrm{ReLU}^r\left(\frac{1}{2} - \frac{x_i}{r}\right)  \\
    &+ 2x_i\left[ \mathrm{ReLU}^{r-1}\left(\frac{1}{2} + \frac{x_i}{r}\right) - \mathrm{ReLU}^{r-1}\left(\frac{1}{2} - \frac{x_i}{r}\right) \right] \\
    &- \frac{r-1}{r}(1-x_i^2)\left[ \mathrm{ReLU}^{r-2}\left(\frac{1}{2} + \frac{x_i}{r}\right) + \mathrm{ReLU}^{r-2}\left(\frac{1}{2} - \frac{x_i}{r}\right) \right] \bigg\}.
\end{align*}
Then, one may check that 
\[
    \frac{\partial^r u^*}{\partial x_i^r} = \frac{r!}{r^r}\left(\mathrm{sgn}\left(\frac{1}{2} + \frac{x_i}{r}\right) + (-1)^r \mathrm{sgn}\left(\frac{1}{2} - \frac{x_i}{r}\right)\right)
\]
and $u^* \in W^r_\infty(\mathbb{S}^{2})$ but $u^* \notin W^{r + \varepsilon}_\infty(\mathbb{S}^{2})$ for any $\varepsilon > 0$. We conduct the experiments for various smoothness ranging from $r = 3$ to $r = 8$ and an increasing convergence slope against $r$ is expected by \autoref{main theorem}, which is also showed in \autoref{fig:Ex2_Loss} and \autoref{fig:Ex2_Slope}.
\begin{figure}[htbp]
    \centering
    \includegraphics[width=0.8\textwidth]{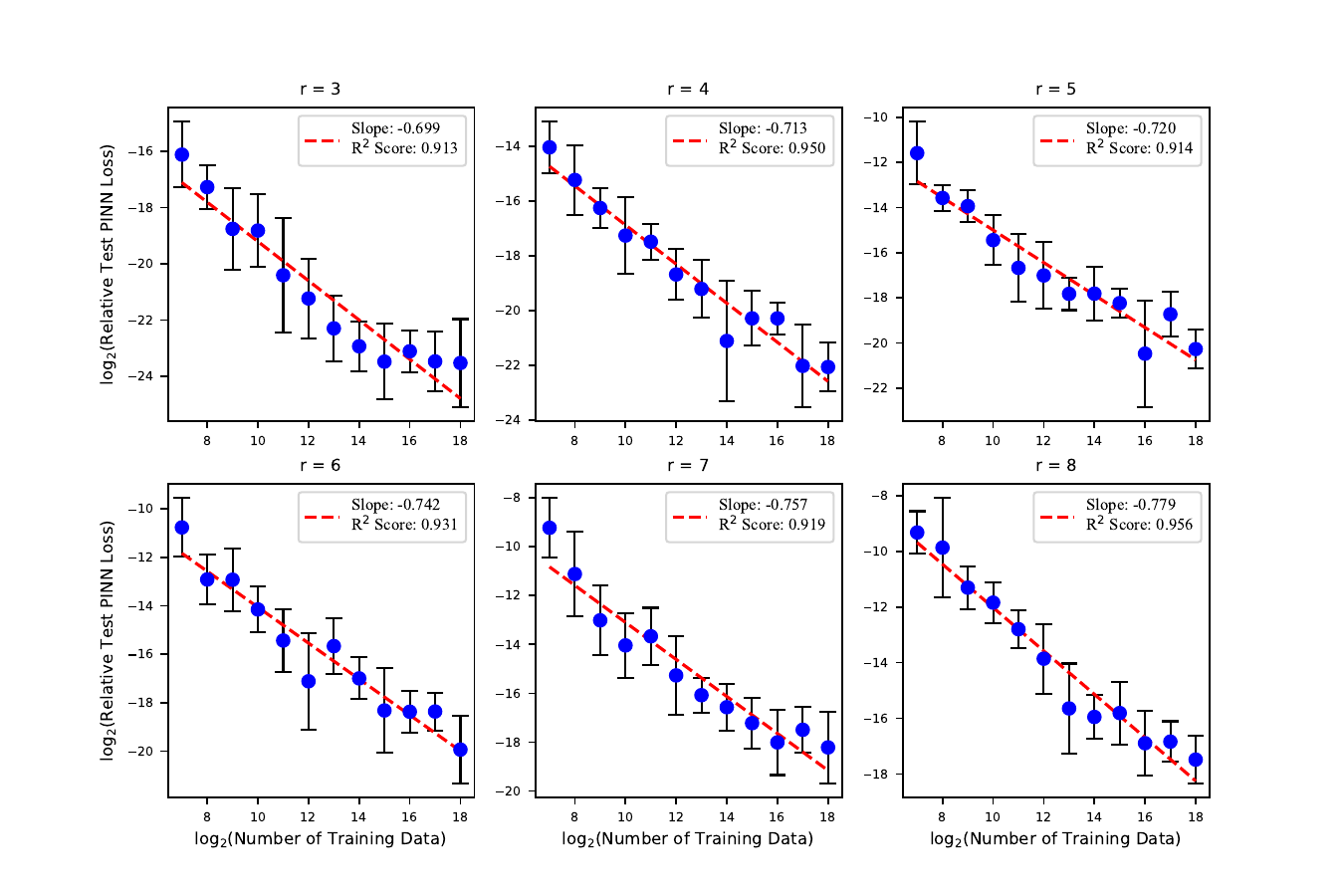}
    \caption{The log-log plot for solutions with different smoothness order on the 2-D sphere. Each point shows the average over $5$ replicates.}
    \label{fig:Ex2_Loss}
    \centering
    \includegraphics[width=0.6\textwidth]{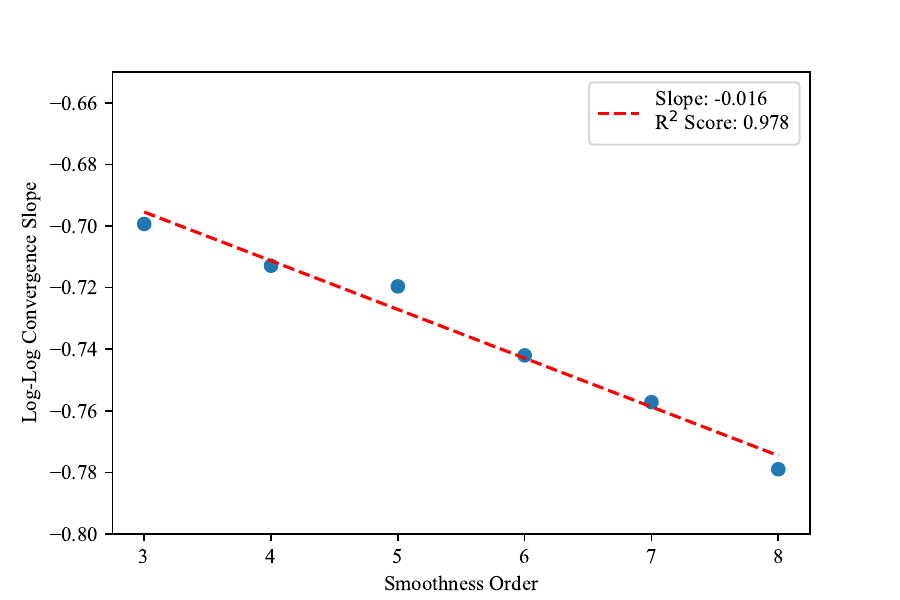}
    \caption{The log-log slope against smoothness order on the 2-D sphere.}
    \label{fig:Ex2_Slope}
\end{figure}

\subsection{Overcoming the Curse of Dimensionality}\label{section: Overcoming the Curse of Dimensionality: Some Promising Perspectives}
The convergence rate is heavily influenced by the dimension, which refers to the phenomenon called the curse of dimensionality. To address it in our framework, reasonable assumptions on the problem must be exploited. In this regard, we propose some promising perspectives.

The first perspective, as we have seen in \autoref{main theorem} and \autoref{section: Influence of Smoothness}, is to assume that the true solution $u^*$ possesses a high degree of smoothness, i.e., $u^* \in W^{r}_\infty(\mathbb{S}^{d-1})$ with an $r$ comparable to $d$. Notably, when $u^*$ is smooth ($r = \infty$), \autoref{main theorem} demonstrates that the convergence rate is independent of $d$. To show this in our experiment, we generalize the experiment from \autoref{section: Smooth Solution on 2-D Sphere} and fix the ground truth solutions as simple polynomials:
\[
    u^*(x) = \sum_{i=1}^{d-2} x_ix_{i+1}x_{i+2} = x_1x_2x_3 + x_2x_3x_4 + \cdots + x_{d-2}x_{d-1}x_d, \quad f(x) = (3d+4)u^*(x).
\]
Similarly, \cite{Lu2022Machine} also considers such polynomials in their experiment. We perform the experiments for various dimensions ranging from $d = 3$ to $d = 10$ and find no significant relationship between the convergence rate and dimension in this problem, as depicted in \autoref{fig:Ex4_Loss} and \autoref{fig:Ex4_Slope}.

\begin{figure}[htbp]
    \centering
    \includegraphics[width=1.0\textwidth]{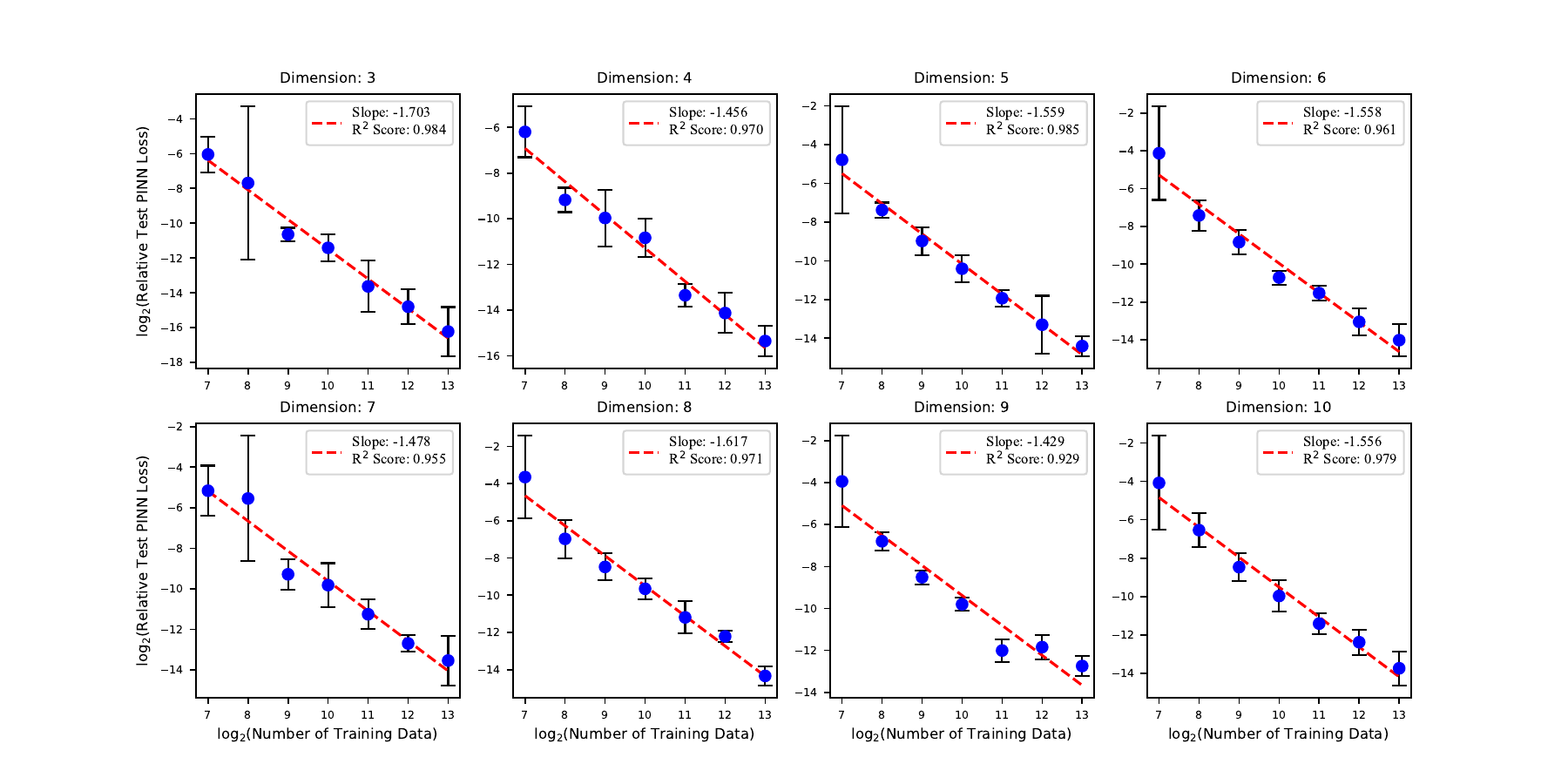}
    \caption{The log-log plot for smooth solutions on $d-1$ dimension sphere. Each point shows the average over $5$ replicates.}
    \label{fig:Ex4_Loss}
    \centering
    \includegraphics[width=0.6\textwidth]{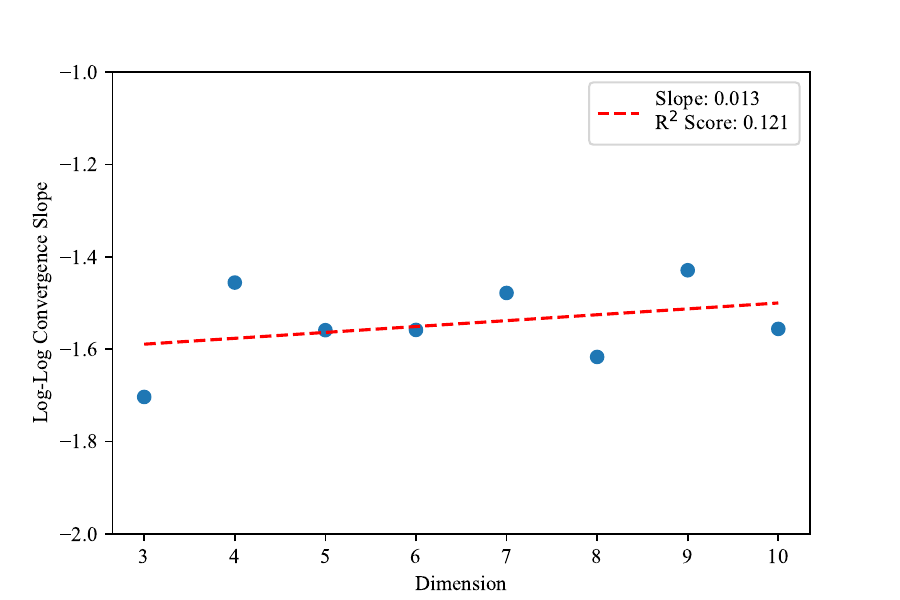}
    \caption{The log-log slope against dimension for smooth solutions on $d-1$ dimension sphere.}
    \label{fig:Ex4_Slope}
\end{figure}

\begin{figure}[htbp]
    \centering
    \includegraphics[width=1.0\textwidth]{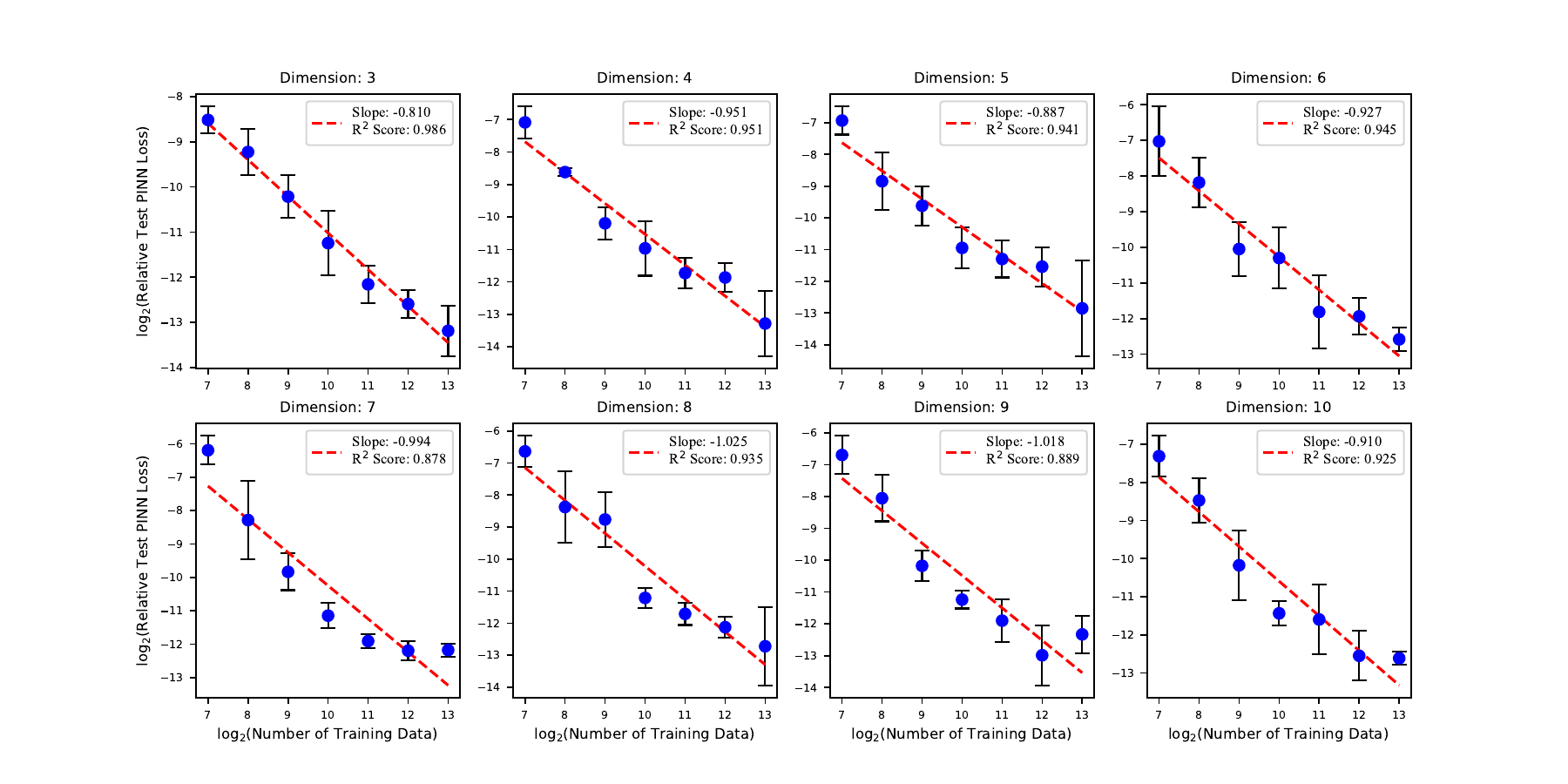}
    \caption{The log-log plot for solutions with special smoothness structure on $d-1$ dimension sphere. Each point shows the average over $5$ replicates.}
    \label{fig:Ex3_Loss}
    \centering
    \includegraphics[width=0.6\textwidth]{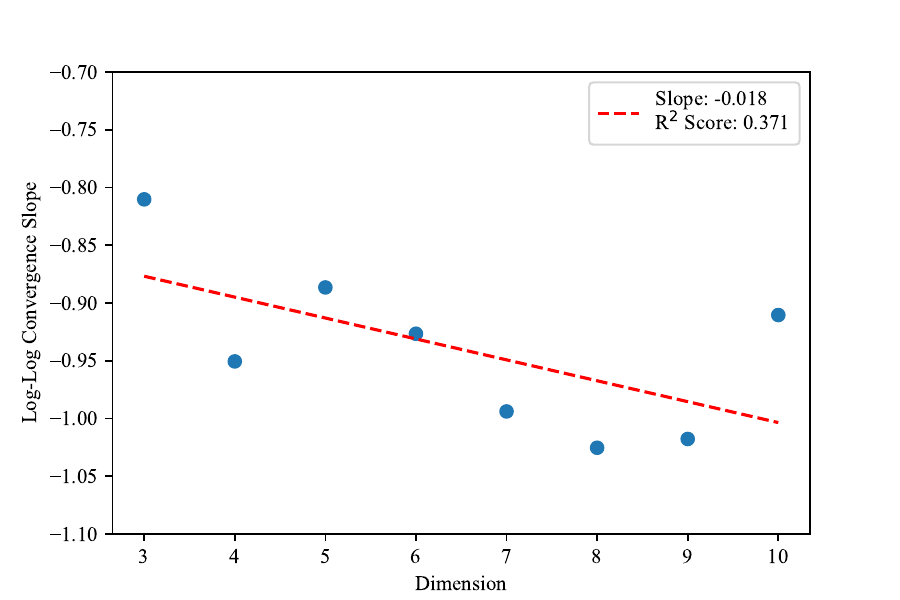}
    \caption{The log-log slope against dimension for solutions with special smoothness structure on $d-1$ dimension sphere.}
    \label{fig:Ex3_Slope}
\end{figure}

Smooth enough ground truth can lead to a fast convergence rate. However, the increasing $d$ still places high demands on $r$. To achieve faster convergence, we turn to make use of some special structures of the ground truth function involving the variables. Some previous works have derived fast approximation rates with neural networks for functions of mixed smoothness \cite{Suzuki2018Adaptivity, Dung2021Deep} or anisotropic smoothness \cite{Suzuki2021Deep}, Korobov space \cite{Montanelli2019New, Mao2022Approximation}, additive ridge functions \cite{Fang2020Theory}, radial functions \cite{Mao2021Theory} and generalized bandlimited functions \cite{Montanelli2021Deep}. Based on these results, we design ground truth functions $u^* \in W^3_\infty(\mathbb{S}^{d-1})$, similar to those in \autoref{section: Influence of Smoothness}:
\begin{align*}
    u^*(x_1,\ldots,x_d) =& \sum_{i=1}^d \mathrm{ReLU}^3\left(\frac{1}{2} + x_i\right) + \sum_{i=1}^d \mathrm{ReLU}^3\left(\frac{1}{2} - x_i\right), \\
    f(x_1,\ldots,x_d) =& \sum_{i=1}^d \bigg\{\mathrm{ReLU}^3\left(\frac{1}{2} + x_i\right) + \mathrm{ReLU}^3\left(\frac{1}{2} - x_i\right)  \\
    &+ 3(d-1)x_i\left[ \mathrm{ReLU}^{2}\left(\frac{1}{2} + x_i\right) - \mathrm{ReLU}^{2}\left(\frac{1}{2} - x_i\right) \right] \\
    & - 6(1-x_i^2)\left[ \mathrm{ReLU}\left(\frac{1}{2} + x_i\right) + \mathrm{ReLU}\left(\frac{1}{2} - x_i\right) \right] \bigg\}.
\end{align*}
Despite having a low general Sobolev smoothness order of $r = 3$, $u^*$ can be expressed as a sum of univariate functions exhibiting isotropic smoothness on each variable. Due to this unique smoothness structure, one can anticipate a fast convergence independent of the increasing dimension for these functions. To verify this claim, we conduct the experiments for various dimensions ranging from $d = 3$ to $d = 10$ and we illustrate this independence in \autoref{fig:Ex3_Loss} and \autoref{fig:Ex3_Slope}.

Our paper investigates functions and PDEs on sphere $\mathbb{S}^{d-1}$, which serves as a prime example of a low dimensional manifold embedded in the Euclidean space $\mathbb{R}^{d}$. Recall the approximation bound (\autoref{upper bound of the approximation error}), the convergence rate $\frac{r-s}{(r-s) + (d-1)}$ derived in our paper and compare to other related results discussing functions and PDEs on $\mathbb{R}^d$, one can observe that the intrinsic dimension $d-1$ replaces the ambient dimension $d$ in our rate. This is also an important insight or question in learning theory: can a model accurately approximate functions on low-dimensional manifolds with a convergence rate that depends on the intrinsic dimension rather than the ambient dimension? Recent representative works \cite{Schmidt-Hieber2019Deep, Chen2019Efficient, Liu2021Besov, Chen2022Nonparametric, Jiao2023Deep} have studied the approximation, nonparametric regression and binary classification on low dimensional manifolds using deep $\mathrm{ReLU}$ network or convolutional residual networks. However, few studies have focused on learning solutions to PDEs on general manifolds by neural networks. While some relative works including \cite{Fang2020PhysicsInformed, Tang2021Physicsinformed, SahliCostabal2024Delta, Bastek2023PhysicsInformed, Zelig2023Numerical} consider $2$-D manifolds in $\mathbb{R}^3$, convergence analysis has not yet been conducted to the best of our knowledge. Inspired by our result, we conjecture that when considering an $s$-order PDE PINN solver on a general $m$-dimensional manifold $\mathcal{M} \subset \mathbb{R}^d$ to approximate the true solution $u^* \in C^r(\mathcal{M})$ (or other regularity function spaces like Sobolev spaces), a fast theoretical convergence rate of $n^{-\frac{r-s}{r-s + m}}$ (up to a logarithmic factor) can be derived. To this end, one must extend existing neural network manifold approximation results to the approximation in high-order smoothness norm like Sobolev norm and bound the complexity of the high-order derivatives of neural networks. Additionally, intrinsic properties of PDEs, such as the strong convexity of PINN risk, must be carefully verified in a general manifold formulation.

We summarize that exploiting smoothness structures of the ground truth function, or the low dimensional manifolds setting, is crucial in overcoming the curse of dimensionality. Our experiments and theoretical analyses have demonstrated the former, while the latter is deserved further study in the future.

\appendix
\section{The Laplace-Beltrami operator and Sobolev spaces on spheres}\label{appsec: The Laplace-Beltrami operator and Sobolev spaces on spheres}
We give a brief introduction to the Laplace-Beltrami operator and Sobolev spaces on spheres. One may refer to \cite{Dai2013Approximation} for more details. Consider a unit sphere $\mathbb{S}^{d-1} \subset \mathbb{R}^d$ for $d \geq 2$. Let $\Delta_0$ symbolize the spherical aspect of the Laplace operator, also known as the Laplace-Beltrami operator, satisfying the equation   
\begin{equation}\label{the Laplace-Beltrami operator}
    \Delta = \frac{\partial^2}{\partial \rho^2}  +  \frac{d-1}{\rho} \frac{\partial}{\partial \rho} + \frac{1}{\rho^2}\Delta_0
\end{equation}
where $x = \rho\theta$ represents the spherical-polar coordinates, $\rho > 0$ indicates the radius, and $\theta \in \mathbb{S}^{d-1}$. Applying equation \eqref{the Laplace-Beltrami operator} and 
\[
    \frac{\partial}{\partial \rho} =\frac{1}{\rho}\sum_{i=1}^d x_i \frac{\partial}{\partial x_i},
\]
one can directly calculate that
\begin{equation}\label{the Laplace-Beltrami operator in xyz coordinates}
    \Delta_0 = \sum_{i=1}^d \frac{\partial^2}{\partial x_i^2} - \sum_{i=1}^d \sum_{j=1}^d x_i x_j \frac{\partial^2}{\partial x_i \partial x_j} - (d-1)\sum_{i=1}^d x_i \frac{\partial}{\partial x_i}.
\end{equation}
The operator $\Delta_0$ is self-adjoint. Moreover, the spherical harmonics are eigenfunctions of $\Delta_0$, with corresponding eigenvalues $\lambda_n = -n(n + d -2)$, where $n=0,1,\cdots$.

The Sobolev space on a sphere, designated as $W^r_p(\mathbb{S}^{d-1})$, is a particular subset of $L^p(\mathbb{S}^{d-1})$, given that $1 \leq p \leq \infty$ and $r \in \mathbb{N}$. This space is characterized by a finite Sobolev norm: 
\[
    \|f\|_{W^r_p(\mathbb{S}^{d-1})} = \|f\|_p + \sum_{1 \leq i < j \leq d} \|D^r_{i,j}f\|_p,
\]
where $\| \cdot \|_p$ denotes the $L^p(\mathbb{S}^{d-1})$ norm corresponding to the uniform measure on $\mathbb{S}^{d-1}$. Here, $D_{i,j}$ denotes the angular derivatives, defined as
\begin{equation}\label{angularderivatives}
    D_{i,j} = x_i\frac{\partial}{\partial x_j} - x_j\frac{\partial}{\partial x_i}.
\end{equation} 
Interestingly, the Laplace-Beltrami operator, $\Delta_0$, can be expressed in terms of these angular derivatives:
\[
    \Delta_0 = \sum_{1 \leq i < j \leq d}D^2_{i,j}.
\] 
For the case where $p = 2$, $W^r_2(\mathbb{S}^{d-1})$ is recognized as a Hilbert space, for which a specific notation, $H^r(\mathbb{S}^{d-1}) = W^r_2(\mathbb{S}^{d-1})$, is utilized. One could also consider a fractional-type Sobolev space, referred to as the Lipschitz space, denoted as $W^{r, \alpha}_p(\mathbb{S}^{d-1})$, with $\alpha \in [0, 1)$. As such, the Sobolev space $W^r_p(\mathbb{S}^{d-1})$ can be defined where $r \geq 0$. For an elaborate discussion of the Lipschitz space, the reader may refer to Chapter 4 of \cite{Dai2013Approximation}.

\section{Two PDE examples}\label{appsec: Two PDE examples}
We provide two PDE examples that satisfy the strong convexity of the PINN risk.
\begin{example}\label{Example: The static Schrödinger equation}
    Consider the static Schrödinger equation given by
    \begin{equation}\label{The static Schrödinger equation}
        -\Delta_0 u(x) + V(x)u(x) = f(x), \quad x \in \mathbb{S}^{d-1}.
    \end{equation}
    Notice that $\Delta_0$ is self-adjoint with eigenvalues $\lambda \leq 0$. If $V$ is a strictly positive constant function, then \eqref{The static Schrödinger equation} has a unique solution $u^*$ and exhibits a relation between the smoothness of $f$ and $u^*$ as described by the elliptic regularity theorem:
    \[
        u^* \in H^r(\mathbb{S}^{d-1}) \Longleftrightarrow f \in H^{r-2}(\mathbb{S}^{d-1}).
    \]
    Furthermore, the strong convexity of the PINN risk is demonstrated in relation to the $H^2(\mathbb{S}^{d-1})$ norm: for all $u \in H^2(\mathbb{S}^{d-1})$, it can be computed that
    \begin{align*}
        \mathcal{R}(u) - \mathcal{R}(u^*) &= \frac{1}{\omega_{d}}\int_{\mathbb{S}^{d-1}} \vert \Delta_0 u(x) -V u(x) + f(x)  \vert^2 d\sigma(x) \\
        &= \frac{1}{\omega_{d}}\int_{\mathbb{S}^{d-1}} \vert \Delta_0 u(x) - V u(x) - \Delta_0 u^*(x) + V u^*(x)  \vert^2 d\sigma(x) \\
        &= \frac{1}{\omega_{d}}\int_{\mathbb{S}^{d-1}} ( \Delta_0 u - \Delta_0 u^* )^2 + ( V u  - V u^* )^2 - 2 (\Delta_0 u - \Delta_0 u^*) \cdot (V u  - V u^* )d\sigma \\
        &= \frac{1}{\omega_{d}}\int_{\mathbb{S}^{d-1}} ( \Delta_0 u - \Delta_0 u^* )^2 + V^2( u  - u^* )^2 + 2V \|\nabla_0( u -  u^*)\|^2d\sigma.
    \end{align*}
    where $\nabla_0$ is the projected gradient operator on $\mathbb{S}^{d-1}$ (see Lemma 1.4.4. in \cite{Dai2013Approximation}).
    Hence,
    \[
        \frac{\min\{1, V^2, 2V\}}{\omega_{d}}\|u - u^*\|^2_{H^2(\mathbb{S}^{d-1})} \leq \mathcal{R}(u) - \mathcal{R}(u^*) \leq \frac{\max\{1, V^2, 2V\}}{\omega_{d}}\|u - u^*\|^2_{H^2(\mathbb{S}^{d-1})}.
    \]
    Having noticed that this equation enjoys many useful properties, we use it to implement our experiments in \autoref{Section: Experiments}.
\end{example}

\begin{example}
    Consider an elliptic equation of order $2s$ given by
    \begin{equation}\label{2s order elliptic equation}
        (-\Delta_0 + \lambda I)^s u(x) = f(x), \quad x \in \mathbb{S}^{d-1},
    \end{equation} where $\lambda > 0$.
    Similar to the discussion in \autoref{Example: The static Schrödinger equation}, we can verify the strong convexity of PINN risk.
    An alternative definition of the Sobolev space $W^r_p(\mathbb{S}^{d-1})$ employs the operator $-\Delta_0 + I$:
    \[
        \|u\|_{W^r_p(\mathbb{S}^{d-1})} = \| (-\Delta_0 + I)^{r/2} u \|_p.
    \]
    In this context, $r \in \mathbb{R}$ and the fractional power is defined in the distributional sense via the spherical harmonic expansion.
    Therefore, this equation inherently obeys the regularity theorem, such that 
    \[
        u \in W^r_p(\mathbb{S}^{d-1}) \Longleftrightarrow f \in W^{r-2s}_p(\mathbb{S}^{d-1}).
    \]
    Accordingly, \autoref{Assumption for PDE} is equivalent to assuming that $f \in W^{r-2s}_\infty(\mathbb{S}^{d-1})$ for some $r \geq 2s+1$.
\end{example}

\section{Bernstein's bound}\label{appsec: Bernstein's bound}
\begin{lemma}\label{Bernstein's bound}
    If \autoref{Assumption for PDE} and \autoref{Assumption for CNN} hold, then for each $u \in \mathcal{F} \cup \{u^*\}$, 
    \begin{equation}\label{variance bound}
        \mathbb{E}_P (L(X, u) - L(X, u^*))^2 \leq C_0 \mathbb{E}_P (L(X, u) - L(X, u^*)) = C_0 (\mathcal{R}(u) - \mathcal{R}(u^*)),
    \end{equation}
    where $P$ is the uniform distribution on $\mathbb{S}^{d-1}$, 
    $C_0 = (M \cdot \sum_{\vert \alpha \vert \leq s} \| a_\alpha \|_\infty + \|f\|_\infty)^2$ and 
    \[
        L(x, u)= \vert (\mathcal{L}u)(x) - f(x) \vert^2.
    \]
    Furthermore, for all $t > 0$, with probability at least $1 - \exp(-t)$, 
    \[
        \mathcal{R}_{n}(u_\mathcal{F}) - \mathcal{R}_{n}(u^*) - \mathcal{R}(u_\mathcal{F}) + \mathcal{R}(u^*) \leq \mathcal{R}(u_\mathcal{F}) - \mathcal{R}(u^*) + \frac{7C_0t}{6n}.
    \]
\end{lemma}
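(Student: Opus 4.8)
The plan is to prove \eqref{variance bound} by a direct pointwise estimate, and then to obtain the high-probability bound as a single application of Bernstein's inequality to the i.i.d.\ sequence $\xi_i := L(X_i,u_\mathcal{F}) - L(X_i,u^*)$; since $u_\mathcal{F} = \arg\min_{u\in\mathcal{F}}\mathcal{R}(u)$ is a fixed (non-random) function, no union bound over $\mathcal{F}$ is required here.

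For \eqref{variance bound}, the key observation is that $u^*$ solves \eqref{PDEs on sphere}, so $(\mathcal{L}u^*)(x) = f(x)$ and hence $L(x,u^*)\equiv 0$ on $\mathbb{S}^{d-1}$. Therefore $L(x,u) - L(x,u^*) = L(x,u) = |(\mathcal{L}u)(x) - f(x)|^2 \ge 0$. Using the explicit form \eqref{form of linear differential operator} of $\mathcal{L}$ and the sup-norm constraint \eqref{sup-norm constraint}, which holds for every $u\in\mathcal{F}$, I would bound pointwise
\[
    |(\mathcal{L}u)(x) - f(x)| \le \sum_{|\alpha|\le s}\|a_\alpha\|_\infty\,\|D^\alpha u\|_{L^\infty(\mathbb{S}^{d-1})} + \|f\|_\infty \le M\sum_{|\alpha|\le s}\|a_\alpha\|_\infty + \|f\|_\infty = \sqrt{C_0},
\]
so that $0 \le L(x,u) - L(x,u^*) \le C_0$ for all $x$. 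Squaring and integrating against the uniform measure then gives $\mathbb{E}_P(L(X,u) - L(X,u^*))^2 \le C_0\,\mathbb{E}_P(L(X,u) - L(X,u^*))$, and the latter expectation equals $\mathcal{R}(u) - \mathcal{R}(u^*)$ by the definition of the population risk; the case $u=u^*$ is trivial.

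For the second statement, write $\mu := \mathcal{R}(u_\mathcal{F}) - \mathcal{R}(u^*) = \mathbb{E}\xi_1 \ge 0$. By the first part, the i.i.d.\ variables $\xi_i$ obey $0 \le \xi_i \le C_0$ and $\operatorname{Var}(\xi_i) \le \mathbb{E}\xi_i^2 \le C_0\mu$. Bernstein's inequality for bounded i.i.d.\ random variables then yields, with probability at least $1 - e^{-t}$,
\[
    \frac{1}{n}\sum_{i=1}^n\xi_i - \mu \le \sqrt{\frac{2C_0\mu\, t}{n}} + \frac{2C_0 t}{3n}.
\]
I would then absorb the square-root term using the weighted AM--GM inequality $\sqrt{\mu\cdot(2C_0 t/n)} \le \mu + \tfrac{C_0 t}{2n}$ (i.e.\ $\sqrt{ab}\le \tfrac{\lambda}{2}a + \tfrac{1}{2\lambda}b$ with $\lambda = 2$), which leaves $\tfrac1n\sum_i\xi_i - \mu \le \mu + \tfrac{C_0 t}{2n} + \tfrac{2C_0 t}{3n} = \mu + \tfrac{7C_0 t}{6n}$. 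Since $\mathcal{R}_n(u^*) = 0$, the left-hand side is exactly $\mathcal{R}_n(u_\mathcal{F}) - \mathcal{R}_n(u^*) - \mathcal{R}(u_\mathcal{F}) + \mathcal{R}(u^*)$, so this is the claimed inequality.

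The argument is essentially routine, so I do not expect a genuine obstacle; the two points needing care are (i) noting $L(\cdot,u^*)\equiv 0$, which is what converts the second-moment control into the clean Bernstein/Massart form with the explicit constant $C_0$ rather than a generic variance proxy, and (ii) choosing the AM--GM weight so that $\tfrac12 + \tfrac23 = \tfrac76$ matches the constant in the statement. I would also double-check the precise version of Bernstein's inequality invoked, since some formulations carry $\operatorname{Var}(\xi_i)$ and others $\mathbb{E}\xi_i^2$ or use the centered bound $|\xi_i - \mathbb{E}\xi_i|\le C_0$; all of these are compatible with the bounds established above.
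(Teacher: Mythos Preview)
Your proposal is correct and follows essentially the same route as the paper: observe $L(\cdot,u^*)\equiv 0$, bound $|(\mathcal{L}u)(x)-f(x)|\le \sqrt{C_0}$ pointwise via \eqref{sup-norm constraint}, apply Bernstein's inequality to the fixed function $u_\mathcal{F}$, and absorb the square-root term by AM--GM to produce the constant $7/6$. The paper's proof is terser but uses exactly the same ingredients.
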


\begin{proof}
    Notice that $u^*$ is the solution of the equation \eqref{PDEs on sphere}, $L(x,u^*) \equiv 0$ so \eqref{variance bound} holds for $u = u^*$. For $u \in \mathcal{F}$ we have 
    \begin{align*}
        &\mathbb{E}_P (L(X, u) - L(X, u^*))^2 \\
        ={}& \frac{1}{\omega_{d}}\int_{\mathbb{S}^{d-1}} \vert  (\mathcal{L}u)(x) - f(x)  \vert^4 d\sigma(x) \\
        \leq{}& \frac{C_0}{\omega_{d}}\int_{\mathbb{S}^{d-1}} \vert  (\mathcal{L}u)(x) - f(x)  \vert^2 d\sigma(x) \\
        ={}& C_0\mathbb{E}_P (L(X, u) - L(X, u^*)).
    \end{align*}
    By Bernstein's inequality and the above estimate, with probability at least $1 - \exp(-t)$, 
    \begin{align*}
        &\mathcal{R}_{n}(u_\mathcal{F}) - \mathcal{R}_{n}(u^*) - \mathcal{R}(u_\mathcal{F}) + \mathcal{R}(u^*) \\
        \leq{}& \sqrt{\frac{2C_0\mathbb{E}_P (L(X, u_{\mathcal{F}}) - L(X, u^*))t}{n}} + \frac{2C_0t}{3n} \\
        \leq{}& \mathbb{E}_P (L(X, u_{\mathcal{F}}) - L(X, u^*)) + \frac{7C_0t}{6n} \\
        ={}& \mathcal{R}(u_\mathcal{F}) - \mathcal{R}(u^*) + \frac{7C_0t}{6n}.
    \end{align*} Then we complete the proof.
\end{proof}

\section{Supplement for approximation error analysis}\label{appsec: Supplement for approximation error analysis}
Here, we aim to complete the proof in \autoref{section: Approximation Error Analysis}, through constructing the neural networks and estimating the approximation error. We first introduce an important lemma from \cite[Theorem 2.6.3]{Dai2013Approximation}.
\begin{lemma}\label{Theorem 2.6.3 of Dai2013Approximation}
    Let $u \in L^p(\mathbb{S}^{d-1})$ for $1 \leq p \leq \infty$. Then there exists a constant $C_1$ only depending on $\eta$ and $d$, such that for all $n_0>0$,
    \[
        \begin{aligned}
            \|L_{n_0}(u)\|_p &\leq C_1\|u\|_p, \\
            \|u-L_{n_0}(u)\|_p &\leq (1+C_1)E_{n_0}(u)_p.
        \end{aligned}
    \]
\end{lemma}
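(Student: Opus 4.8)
The plan is to deduce both inequalities from two structural properties of the operator $L_{n_0}$: (i) it reproduces every spherical polynomial of degree at most $n_0$, and (ii) it is bounded on $L^p(\mathbb{S}^{d-1})$ by a constant $C_1$ depending only on $\eta$ and $d$, uniformly in $n_0$ and in $p \in [1,\infty]$. Granting (i) and (ii), the first displayed estimate is just (ii), and the second follows by the classical near-best argument: for any $P \in \varPi_{n_0}(\mathbb{S}^{d-1})$, property (i) gives $u - L_{n_0}(u) = (u-P) - L_{n_0}(u-P)$, so $\|u - L_{n_0}(u)\|_p \le \|u-P\|_p + C_1\|u-P\|_p = (1+C_1)\|u-P\|_p$; taking the infimum over $P \in \varPi_{n_0}(\mathbb{S}^{d-1})$ yields $\|u - L_{n_0}(u)\|_p \le (1+C_1)E_{n_0}(u)_p$.

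For property (i) I would invoke the addition formula for spherical harmonics: with $\{Y_{i,l}\}_l$ an $L^2(\mathbb{S}^{d-1},d\sigma)$-orthonormal basis of the degree-$i$ spherical harmonics, $\frac{\lambda+i}{\lambda}C_i^\lambda(\langle x,y\rangle) = \omega_d \sum_{l} Y_{i,l}(x)Y_{i,l}(y)$, read in the limiting sense $\frac1\lambda C_i^\lambda(\cos\theta)\to \frac2i\cos i\theta$ when $d=2$. Hence $\frac{1}{\omega_d}\int_{\mathbb{S}^{d-1}} Y_{j,l}(y)\,\frac{\lambda+i}{\lambda}C_i^\lambda(\langle x,y\rangle)\,d\sigma(y) = \delta_{ij}Y_{i,l}(x)$, i.e. the $i$-th term of $l_{n_0}$ realizes the orthogonal projection onto $\mathcal{H}_i$ up to the factor $\eta(i/n_0)$. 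Since $\eta \equiv 1$ on $[0,1]$, we have $\eta(i/n_0)=1$ for $0 \le i \le n_0$, and every $P \in \varPi_{n_0}(\mathbb{S}^{d-1})$ is a finite linear combination of spherical harmonics of degree $\le n_0$; therefore $L_{n_0}(P)=P$.

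For property (ii), rotation invariance of $L_{n_0}$ together with a Schur-test/Young-convolution argument on $\mathbb{S}^{d-1}$ reduces the operator norm to the Lebesgue constant $\Lambda_{n_0} := \frac{1}{\omega_d}\int_{\mathbb{S}^{d-1}} |l_{n_0}(\langle x_0, y\rangle)|\,d\sigma(y)$ for any fixed $x_0$, a quantity independent of $p$; concretely, applying Hölder with the (rotation-invariantly normalized) measure $|l_{n_0}(\langle x,y\rangle)|\,d\sigma(y)$ and then Fubini gives $\|L_{n_0}u\|_p \le \Lambda_{n_0}\|u\|_p$. It thus remains to bound $\Lambda_{n_0}$ uniformly in $n_0$, which is the only analytically substantial step and the main obstacle. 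I would exploit the smoothness of the filter $\eta$: writing $l_{n_0}(\cos\theta)$ as a filtered sum of normalized Gegenbauer polynomials and applying summation by parts $\kappa$ times — transferring the $\kappa$-th order finite differences of $j \mapsto \eta(j/n_0)$, which are $O(n_0^{-\kappa})$ by $\eta \in C^\infty$, onto the iterated partial sums of the zonal kernel $\sum_{i\le N}\frac{\lambda+i}{\lambda}C_i^\lambda(\cos\theta)$, whose magnitude is $O(N^{d-1})$ for $N\theta \lesssim 1$ and decays like a fixed negative power of $\theta$ for $N\theta \gtrsim 1$ (classical Gegenbauer bounds, e.g. Chapter 2 of \cite{Dai2013Approximation}) — yields the localized pointwise kernel estimate $|l_{n_0}(\cos\theta)| \le C_\kappa\, n_0^{d-1}(1+n_0\theta)^{-\kappa}$ for $\theta \in [0,\pi]$ and arbitrary $\kappa \in \mathbb{N}$. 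Integrating this against the weight $(\sin\theta)^{d-2}$ and substituting $u = n_0\theta$ gives $\Lambda_{n_0} \le c_d\int_0^{\infty} u^{d-2}(1+u)^{-\kappa}\,du =: C_1 < \infty$ as soon as $\kappa \ge d$, with $C_1$ depending only on $\eta$ and $d$ (for $d=2$ this is exactly the classical Lebesgue-constant bound for smoothly filtered Fourier partial sums). The real work is the careful summation-by-parts bookkeeping across the two regimes $n_0\theta \lesssim 1$ and $n_0\theta \gtrsim 1$; all the Gegenbauer sum estimates needed are standard and may be quoted from \cite{Dai2013Approximation}.
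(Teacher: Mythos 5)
Your proof is correct and is essentially the same route as the paper's, which does not prove this lemma at all but cites it directly from \cite[Theorem 2.6.3]{Dai2013Approximation}; the proof there is exactly your two-step scheme (reproduction of $\varPi_{n_0}(\mathbb{S}^{d-1})$ because $\eta\equiv 1$ on $[0,1]$ together with the addition formula, plus a $p$-independent bound on the operator norm by the Lebesgue constant, controlled via the localized kernel estimate $|l_{n_0}(\cos\theta)|\le C_\kappa n_0^{d-1}(1+n_0\theta)^{-\kappa}$, followed by the standard near-best argument $u-L_{n_0}(u)=(u-P)-L_{n_0}(u-P)$). The one ingredient you quote rather than derive in full --- the localized kernel estimate obtained by repeated summation by parts on the smoothly filtered Gegenbauer sum --- is itself a standard result (Theorem 2.6.5 in \cite{Dai2013Approximation}), so nothing essential is missing.
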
 

\subsection{Proof of \autoref{ridge function approximation}}\label{appsec: Proof of ridge function approximation}
    Recall that $D_{i,j}$ denotes the angular derivatives given by \eqref{angularderivatives}. We claim that 
    \begin{align}
        \|D^r_{i,j}(u - L_{n_0}(u))\|_p &\leq C_4E_{n_0}(D^r_{i,j}u)_p, \quad 1 \leq i < j \leq d, \label{Theorem 4.5.5 from Dai2013Approximation} \\
        \|D^r_{i,j} L_{n_0}(u)\|_p &\leq C_4\|D^r_{i,j}u\|_p, \quad \quad \ 1 \leq i < j \leq d, \label{Upper bound for L_n_0} \\
        E_{n_0}(u)_p &\leq C_4n_0^{-r}\|u\|_{W^r_p(\mathbb{S}^{d-1})}. \label{Corollary 4.5.6 from Dai2013Approximation}
    \end{align}
    Consequently, for any positive integer $s \leq r - 1$ we have 
    \begin{align}
        \|u - L_{n_0}(u)\|_{W^s_p(\mathbb{S}^{d-1})} &\leq C_4n_0^{s-r}\|u\|_{W^r_p(\mathbb{S}^{d-1})}, \label{L_n_0 W^s_p approximation}\\
        \|L_{n_0}(u)\|_{W^r_p(\mathbb{S}^{d-1})} &\leq C_4\|u\|_{W^r_p(\mathbb{S}^{d-1})}. \label{L_n_0 W^r_p sup-norm}
    \end{align}
    All these inequalities are also valid for $\widetilde{L}_{n_0}$. Hence, we have
    \[
        \begin{aligned}
            \bigg\|u - \sum_{i=1}^m \mu_i L_{n_0}(u)(y_i) l_{n_0}(\langle x,y_i \rangle)\bigg\|_{W^s_p(\mathbb{S}^{d-1})} &\leq C_4n_0^{s-r}\|u\|_{W^r_p(\mathbb{S}^{d-1})}, \\
            \bigg\|\sum_{i=1}^m \mu_i L_{n_0}(u)(y_i) l_{n_0}(\langle x,y_i \rangle)\bigg\|_{W^{r}_p(\mathbb{S}^{d-1})} &\leq C_4\|u\|_{W^r_p(\mathbb{S}^{d-1})}.
        \end{aligned}
    \]

For the validation of \eqref{Theorem 4.5.5 from Dai2013Approximation} and \eqref{Corollary 4.5.6 from Dai2013Approximation}, one can refer to \cite[Theorem 4.5.5 and Corollary 4.5.6]{Dai2013Approximation}. Inequality \eqref{Upper bound for L_n_0} is a direct consequence of \autoref{Theorem 2.6.3 of Dai2013Approximation} and the equality $L_{n_0}D^r_{i,j} = D^r_{i,j}L_{n_0}$. For any positive integer $s \leq r - 1$, we write
\begin{equation}\label{Sobolev norm of approximation residual}
    \|u - L_{n_0}(u)\|_{W^s_p(\mathbb{S}^{d-1})} = \| u - L_{n_0}(u) \|_p + \sum_{1 \leq i < j \leq d}\|D^s_{i,j} (u - L_{n_0}(u))\|_p.
\end{equation} 
Observing that $D^s_{i,j} u \in W^{r-s}_p(\mathbb{S}^{d-1})$, the combination of \eqref{Theorem 4.5.5 from Dai2013Approximation} and \eqref{Corollary 4.5.6 from Dai2013Approximation} yields
\[
    \begin{aligned}
        \| u - L_{n_0}(u) \|_p &\leq C_4 E_{n_0}(u)_p \leq C_4n_0^{-r}\|u\|_{W^r_p(\mathbb{S}^{d-1})}, \\
        \|D^s_{i,j} (u - L_{n_0}(u))\|_p &\leq C_4E_{n_0}(D^s_{i,j}u)_p \leq C_4n_0^{s-r}\|u\|_{W^r_p(\mathbb{S}^{d-1})}.
    \end{aligned}
\] 
Combining these inequalities with \eqref{Sobolev norm of approximation residual} results in \eqref{L_n_0 W^s_p approximation}, and \eqref{L_n_0 W^r_p sup-norm} is a direct inference from \eqref{Upper bound for L_n_0} (with a possible reselection of the constant $C_4$). Hence, the proof is completed.

\subsection{Proof of the sup-norm bounds in \autoref{construction of convolution layers}}\label{appsec: Proof of the sup-norm bounds in construction of convolution layers}

Define a sequence $W$ supported in $\{0, \ldots , md-1\}$ as $W_{(j-1)d+(d-i)} = (y_j)_i$, where $j = 1, \ldots , m$ and $i = 1, \ldots , d$. Following \cite[Theorem 3]{Zhou2020Universality}, there exists a sequence of convolution kernels $\{w^{(l)}\}_{l=1}^L$ supported on $\{0,\ldots,S-1\}$ with $L = \left\lceil \frac{md-1}{S-2} \right\rceil$ such that $W$ admits the convolutional factorization: 
\[
    W = w^{(L)} \ast w^{(L-1)} \ast \cdots \ast w^{(1)}
\] 
with the corresponding convolution matrices satisfying 
\[
    T = T^{(L)} T^{(L-1)}\cdots T^{(1)}.
\]
Given a convolution kernel $w$, one can define a polynomial on $\mathbb{C}$ by $w(z) = \sum_{i=0}^\infty w_i z^i$. Also by the proof of \cite[Theorem 3]{Zhou2020Universality}, we obtain the following polynomial factorization: 
\[
    W(z) = w^{(L)}(z) w^{(L-1)}(z) \cdots w^{(1)}(z),
\] 
according to the convolutional factorization above.

Considering a rotation of the cubature sample $y = \{y_1,\ldots,y_m\}$, we can assume that $y_m = (1, 0 , \cdots , 0)$, which implies that $W_{md-1} = 1$. Consequently, the polynomial $W(z)$ can be entirely factorized as 
\[
    W(z) = \prod_{i = 1}^{s_1} (z^2 - 2x_iz + x_i^2 + y_i^2)\prod_{j = 2s_1 + 1}^{md-1} (z - x_j),
\] 
comprising $2s_1$ complex roots $x_i \pm iy_i$ and $md-2s_1-1$ real roots $x_j$, both appearing with multiplicity. We can construct $w^{(l)}$ by taking some quadratic factors and linear factors from the above factorization so that $w^{(l)}(z)$ is a polynomial of degree up to $S-1$. Employing Cauchy's bound on the magnitudes of all complex roots, we establish that 
\[
    \vert x_j\vert  \vee \vert x_i \pm iy_i \vert \leq 1 + \max\bigg\{\bigg\vert \frac{W_{md-2}}{W_{md-1}}\bigg\vert, \ldots, \bigg\vert \frac{W_0}{W_{md-1}}\bigg\vert \bigg\} \leq 2.
\] 
Therefore, the coefficients of $w^{(l)}(z)$ are bounded by a constant $C_5$, which depends only on $S$. This verifies that 
\[
    \|w^{(l)}\|_\infty \leq C_5, \quad l = 1, \ldots , L.
\]

Adopting the proof of \cite[Lemma 3]{Fang2020Theory}, we set $b^{(1)} = -\|w^{(1)}\|_1 \textbf{1}_{d_1}$ and 
\[
    b^{(l)} = \left( \prod_{i=1}^{l-1} \| w^{(i)} \|_1 \right) T^{(l)}\textbf{1}_{d_{l-1}} - \left( \prod_{i=1}^{l} \| w^{(i)} \|_1 \right) \textbf{1}_{d_{l}},\mbox{ for } l=2,\ldots,L,
\] 
where $\textbf{1}_{d_l}$ is the constant $1$ vector in $\mathbb{R}^{d_l}$. Consequently, we find that $\|b^{(1)}\|_\infty = \|w^{(1)}\|_1 \leq S \cdot C_5$ and 
\[
    \|b^{(l)}\|_\infty \leq \bigg( \prod_{i=1}^{l-1} \| w^{(i)} \|_1 \bigg) \cdot \| w^{(l)} \|_1 + 
    \prod_{i=1}^{l} \| w^{(i)} \|_1  \leq 2\bigg( \prod_{i=1}^{l} \| w^{(i)} \|_1 \bigg) \leq 2S^l C^l_5.
\] 
Finally setting $C_6 = 2SC_5$ suffices to complete the proof of the sup-norm bounds. 

\subsection{Proof of \autoref{construction of FCNN}}\label{appsec: Proof of construction of FCNN}
We begin with some preliminaries of B-spline interpolation. Consider $u_1, \ldots, u_k$ as functions defined on an interval $I \subset \mathbb{R}$, with $t_1 \leq t_2 \leq \cdots \leq t_k$ as points in $I$. Suppose that 
\[
    t_1, \ldots,t_k = \tau_1, \ldots , \tau_1 ,\ldots, \tau_l,\ldots,\tau_l,
\]
where each $\tau_i$ is repeated $k_i$ times and $\sum_{i=1}^{l} k_i = k$. The $i$-th derivative of $u$ is denoted by $D^iu$, and we define a specific determinant based on these derivatives and the functions, as follows:

\[
    \begin{aligned}
    D
        \begin{pmatrix}
            t_1, \cdots, t_k \\
            u_1, \cdots, u_k
        \end{pmatrix} 
    = \det
        \begin{bmatrix}
            u_1(\tau_1) &  u_2(\tau_1) & \cdots & u_k(\tau_1) \\
            Du_1(\tau_1) &  Du_2(\tau_1) & \cdots & Du_k(\tau_1) \\
            \vdots & \vdots & & \vdots \\
            D^{k_1-1}u_1(\tau_1) & D^{k_1-1}u_2(\tau_1) & \cdots & D^{k_1-1}u_k(\tau_1) \\
            \vdots & \vdots & & \vdots \\
            u_1(\tau_l) &  u_2(\tau_l) & \cdots & u_k(\tau_l) \\
            Du_1(\tau_l) &  Du_2(\tau_l) & \cdots & Du_k(\tau_l) \\
            \vdots & \vdots & & \vdots \\
            D^{k_l-1}u_1(\tau_l) & D^{k_l-1}u_2(\tau_l) & \cdots & D^{k_l-1}u_k(\tau_l) \\
        \end{bmatrix}.
    \end{aligned}
\] 
Given an integer $k>0$, we define the $(k+1)$-th order divided difference of function $f$ on interval $I$ over points $t_1,\ldots,t_{k+2}$ as

\[
    [t_1,\cdots,t_{k+2}]f = \frac
        {
            D
            \begin{pmatrix}
                t_1, t_2, \cdots,t_{k+1}, t_{k+2} \\
                1, x, \cdots, x^{k}, f
            \end{pmatrix}
        }
        {
            D
            \begin{pmatrix}
                t_1, t_2, \cdots,t_{k+1}, t_{k+2} \\
                1, x, \cdots, x^k, x^{k+1}
            \end{pmatrix}
        }.
\] 
Consider a sequence of real numbers $\cdots \leq t_{-1} \leq t_{0} \leq t_1 \leq \cdots $, and for integers $i$ and $k \geq 0$, we define the normalized $(k+1)$-th order B-spline associated with $t_i, \ldots, t_{i+k+1}$ as
\[
    N_i^{k+1}(x) = (-1)^{k+1}(t_{i+k+1}-t_{i})[t_i,\cdots,t_{i+k+1}](x-t)^{k}_+.
\]
Here we notice that $(x)^{k}_+$ is exactly the $\mathrm{ReLU}^k$ function.

For $N > 0$, we fix an extended uniform partition of $[-1,1]$:
\[
    t_1= \cdots = t_{k+1} = -1 < t_{k+2} < \cdots < t_{2N+k} < 1= t_{2N+k+1} = \cdots =t_{2N+2k+1}
\]
with $t_{k+i+1} = -1 + \frac{i}{N}$ for $i=0,\ldots,2N$. For $f \in C[-1,1]$, the corresponding interpolation spline is defined as
\begin{equation}\label{interpolation spline Q}
    Q^{k+1}_N(f) = \sum_{i=1}^{2N + k} J_i(f)N^{k+1}_i(x),
\end{equation}
where $J_i$ are certain fixed linear functionals. By \cite[Theorem 6.22]{Schumaker2007Spline} we know that
\begin{equation}\label{sup-norm bound for B-spline interpolation function}
    \|Q^{k+1}_N(f)\|_\infty \vee \vert J_i(f) \vert \leq (2k+2)^{k+1}\|f\|_\infty.
\end{equation} 
Moreover, we have the following lemma.

\begin{lemma}\cite[Corollary 6.21]{Schumaker2007Spline}
    There exists a constant $C_7$ only depending on $k$ such that, for all 
    $f \in C^k[-1,1]$ and $l = 0, \ldots, k$, we have
    \[
        \|D^l(f - Q^{k+1}_N(f))\|_\infty \leq \frac{C_7}{N^{k-l}}\omega\bigg(D^{k}f,\frac{1}{N}\bigg).
    \]
    Here, $\omega(D^k f,1/N)$ is the modulus of continuity of the $k$-th derivative of $f$:
    \[
        \omega\bigg(D^k f,\frac{1}{N}\bigg) = \sup_{\substack{t, t' \in [-1,1],\\\vert t-t'\vert \leq 1/N}}\vert D^k f(t)-D^k f(t')\vert.
    \]
\end{lemma}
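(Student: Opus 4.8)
Since the statement is quoted verbatim from \cite[Corollary 6.21]{Schumaker2007Spline}, the cleanest ``proof'' is simply to invoke that reference: it is a classical fact about quasi-interpolation by splines and requires no new argument in the present setting. For completeness, however, one can reconstruct it from three standard ingredients, and I sketch that route below.

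First I would record the structural properties of the operator $Q^{k+1}_N$ from \eqref{interpolation spline Q}: it is linear, it reproduces polynomials of degree at most $k$ (the functionals $J_i$ are the dual functionals of the B-spline basis, chosen precisely so that $Q^{k+1}_N(p)=p$ for $p\in\varPi_k$), and it is \emph{local} --- on a mesh cell $I_j=[t_j,t_{j+1}]$ the spline $Q^{k+1}_N(f)$ depends only on the values of $f$ on a neighborhood $\widetilde I_j$ consisting of at most $k+1$ adjacent cells, because $N^{k+1}_i$ is supported on $[t_i,t_{i+k+1}]$ and $J_i$ samples $f$ only there. Combining the stability bound \eqref{sup-norm bound for B-spline interpolation function} with the Markov-type estimate $\|D^l N^{k+1}_i\|_\infty\lesssim N^{l}$ for normalized B-splines on a uniform mesh of width $1/N$ (and noting at most $k+1$ of them overlap any point) yields the \emph{local stability} inequality
\[
    \|D^l Q^{k+1}_N(g)\|_{L^\infty(I_j)} \lesssim N^{l}\,\|g\|_{L^\infty(\widetilde I_j)},\qquad l=0,\ldots,k,
\]
with a constant depending only on $k$.

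The heart of the argument is a localization-plus-Taylor step. Fix $I_j$, let $a$ be the left endpoint of $\widetilde I_j$, and let $p_j\in\varPi_k$ be the degree-$k$ Taylor polynomial of $f$ at $a$. Since $Q^{k+1}_N$ reproduces $p_j$, on $I_j$ we have $f-Q^{k+1}_N(f)=(f-p_j)-Q^{k+1}_N(f-p_j)$; applying $D^l$ and estimating the two pieces separately. The Taylor remainder in integral form gives $\|D^l(f-p_j)\|_{L^\infty(\widetilde I_j)}\lesssim N^{-(k-l)}\,\omega(D^k f,1/N)$, using $\mathrm{diam}(\widetilde I_j)\lesssim 1/N$. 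For the second piece, local stability applied to $g=f-p_j$ together with the $l=0$ Taylor bound gives $\|D^l Q^{k+1}_N(f-p_j)\|_{L^\infty(I_j)}\lesssim N^{l}\cdot N^{-k}\,\omega(D^k f,1/N)=N^{-(k-l)}\,\omega(D^k f,1/N)$. Adding the two contributions and taking the supremum over all mesh cells produces the claimed bound with $C_7=C_7(k)$.

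The one place that needs care --- the main obstacle --- is the top-order case $l=k$: the spline $Q^{k+1}_N(f)$ is only $C^{k-1}$, so $D^k Q^{k+1}_N(f)$ is merely piecewise constant and the estimate must be read cell by cell. This causes no trouble because every estimate above is local (over $I_j$ and $\widetilde I_j$), and at $l=k$ the Markov inequality $\|D^k N^{k+1}_i\|_\infty\lesssim N^{k}$ is exactly the jump-size bound for a degree-$k$ B-spline, so no continuity of $D^k$ is used. One should also check the effect of the $(k+1)$-fold repeated knots at $\pm1$ in the partition preceding \eqref{interpolation spline Q}; this only shrinks the stencils $\widetilde I_j$ near the endpoints and leaves all constants unchanged.
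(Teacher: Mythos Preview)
The paper does not prove this lemma at all; it simply quotes it from \cite[Corollary~6.21]{Schumaker2007Spline} and moves on. Your proposal correctly identifies this, and the sketch you supply of the classical quasi-interpolation argument (polynomial reproduction, local stability via Markov for B-splines, Taylor localization, cell-wise reading at the top derivative) is the standard route and is sound as an outline.
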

    
Indeed, for a particular $l_{n_0} \in C^\infty [-1,1]$, when $d \geq 3$, given that $\|C^\lambda_i\|_\infty = C^\lambda_i(1) = \binom{i + d - 3}{i} \leq (i+1)^{d-3}$, we can establish the following inequality:
\[
    \|l_{n_0}\|_\infty \leq 5\cdot3^{d-2}n_0^{d-1}.
\]
Obviously, this inequality also holds when $d = 2$. It is crucial to note that $l_{n_0}$ represents a polynomial of degree at most $2n_0$. Applying the Markov inequality for polynomials, one can derive
\[
    \omega\bigg(D^k l_{n_0},\frac{1}{N}\bigg) \leq \frac{1}{N}\|D^{k+1} l_{n_0}\|_\infty \leq \frac{1}{N}(2n_0)^{2k+2}\|l_{n_0}\|_\infty \leq \frac{5 \cdot 2^{2k+2} \cdot 3^{d-2} n_0^{d+2k+1}}{N}.
\]
As a consequence, we obtain
\[
    \|D^l(l_{n_0} - Q^{k+1}_N(l_{n_0}))\|_\infty \leq \frac{C_8n_0^{d+2k+1}}{N^{k-l+1}}
\]
where the constant $C_8$ only depends on $k$ and $d$, which is exactly \eqref{interpolation spline error}.

To demonstrate that ReLU$^k$ is capable of expressing the interpolation spline function, we rewrite $N^{k+1}_i(x)$ for all $x \in [-1, 1]$ explicitly, as stated by \cite[Theorem 4.14 and Equation (4.49)]{Schumaker2007Spline}:
\begin{equation}\label{interpolation spline N}
    N^{k+1}_i(x) = 
    \left\{
        \begin{aligned}
            &\sum_{j=1}^{k-i+2} \alpha_{ij}(x+1)^{k-j+1}_+ + \sum_{j=1}^{i}\beta_{ij}(x+t_{k+j+1})^{k}_+,
            && 1 \leq i \leq k, \\
            &\frac{N^k}{k!}\sum_{j=0}^{k+1}(-1)^j\binom{k+1}{j}\bigg((x-t_i)-\frac{j}{N}\bigg)^k_+,  
            && k+1 \leq i \leq 2N, \\
            &\sum_{j=1}^{2N+k-i+1} \gamma_{ij}(x-t_{i + j -1})^k_+,
            && 2N+1 \leq i \leq 2N+k,
        \end{aligned}
    \right.
\end{equation}
wherein $\alpha_{ij},\beta_{ij},\gamma_{ij}$ are constants depending on $N$ with $\max(\vert\alpha_{ij}\vert, \vert\beta_{ij}\vert, \vert\gamma_{ij}\vert) \leq N^k$. To prove $(x+1)^{k-j+1}_+$ appearing in $N^{k+1}_i(x)$ for $i=1,\ldots,k$ and $j \geq 2$ can be represented using ReLU$^k$, we propose the following lemma.

\begin{lemma}\label{Decomposition of x^s to ReLU^k}
    Given integers $l,k \in \mathbb{N}\cup \{0\}$ with $l < k$,  there exist constants $\zeta_{li}$ and $\xi_{li}, i=0,\ldots,k$, such that 
    \[
        x^l =\sum_{i=0}^{k} \xi_{li} (x+\zeta_{li})^{k}, \quad x \geq 0.
    \]
\end{lemma}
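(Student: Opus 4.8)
The plan is to recognize the claimed identity as a change-of-basis statement in the space $\mathcal{P}_k$ of univariate polynomials of degree at most $k$. First I would fix any $k+1$ pairwise distinct real numbers $\zeta_{l0},\ldots,\zeta_{lk}$; for concreteness one may take $\zeta_{li}=i$, and in fact the same choice works for every $l$. The crux is that the family $\{(x+\zeta_{li})^k : i=0,\ldots,k\}$ is a basis of $\mathcal{P}_k$.

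To establish linear independence, expand each shifted power by the binomial theorem,
\[
    (x+\zeta_{li})^k=\sum_{j=0}^{k}\binom{k}{j}\zeta_{li}^{\,k-j}x^j,
\]
so that, written in the monomial basis $1,x,\ldots,x^k$, these $k+1$ polynomials have coefficient matrix $\bigl(\binom{k}{j}\zeta_{li}^{\,k-j}\bigr)_{j,i}$. Factoring out the nonzero diagonal entries $\binom{k}{j}$ reduces its determinant to a Vandermonde determinant $\det(\zeta_{li}^{\,k-j})_{j,i}=\prod_{i<i'}(\zeta_{li'}-\zeta_{li})$, which is nonzero precisely because the $\zeta_{li}$ are distinct. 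Hence the coefficient matrix is invertible, the $k+1$ shifted powers are linearly independent, and since $\dim\mathcal{P}_k=k+1$ they span $\mathcal{P}_k$.

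Because $l<k$ we have $x^l\in\mathcal{P}_k$, so there is a (unique) vector $(\xi_{l0},\ldots,\xi_{lk})$ with $x^l=\sum_{i=0}^k\xi_{li}(x+\zeta_{li})^k$ as an identity of polynomials; concretely, $\xi_{li}$ is obtained by solving the linear system $\binom{k}{j}\sum_{i}\xi_{li}\zeta_{li}^{\,k-j}=\delta_{jl}$ for $j=0,\ldots,k$, whose matrix we just showed is nonsingular. A polynomial identity valid on all of $\mathbb{R}$ is in particular valid for $x\ge 0$, which is the asserted statement.

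There is no serious obstacle: the only point requiring care is the bookkeeping that the shift matrix is, up to invertible diagonal scaling, a Vandermonde matrix and hence nonsingular for distinct nodes. One could alternatively construct the $\xi_{li}$ by iterated forward differences of $x^k$ — using that $\sum_{i=0}^{k-l}(-1)^{k-l-i}\binom{k-l}{i}(x+ih)^k$ is a degree-$l$ polynomial with nonzero leading coefficient and peeling off lower-order terms by downward induction on $l$ — but that route needs extra care to keep the total number of distinct shifts equal to $k+1$, whereas the Vandermonde argument yields exactly $k+1$ terms with an essentially free choice of nodes.
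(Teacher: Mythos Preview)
Your proposal is correct and takes essentially the same approach as the paper: both expand $(x+\zeta_{li})^k$ by the binomial theorem, form the resulting linear system for the coefficients $\xi_{li}$, and observe that the coefficient matrix is a Vandermonde matrix scaled by the nonzero binomial factors $\binom{k}{j}$, hence invertible whenever the nodes $\zeta_{li}$ are pairwise distinct. Your framing in terms of a basis of $\mathcal{P}_k$ is a clean way to package the same linear-algebraic fact.
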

\begin{proof}
    Compare the coefficients on both sides results in a linear system of equations
        \[
            \begin{bmatrix}
                \zeta^k_{l,0} & \zeta^k_{l,1} & \cdots & \zeta^k_{l,k} \\
                \vdots & \vdots & \vdots & \vdots \\
                \binom{k}{k-l}\zeta^{k-l}_{l,0} & \binom{k}{k-l}\zeta^{k-l}_{l,1} & \cdots & \binom{k}{k-l}\zeta^{k-l}_{l,k} \\
                \vdots & \vdots & \vdots & \vdots \\
                1 & 1 & \ldots & 1
            \end{bmatrix}
            \begin{bmatrix}
                \xi_{l,0} \\
                \xi_{l,1} \\
                \vdots \\
                \vdots \\
                \xi_{l,k}
            \end{bmatrix}
            =
            \begin{bmatrix}
                0 \\
                \vdots \\
                1 \\
                \vdots \\
                0
            \end{bmatrix}.
        \]
        The solution requires finding $\zeta_{li}$, $i=0,\ldots,k$, such that the matrix is invertible. It is noteworthy that the determinant of the matrix is 
        \[
            \prod_{i=1}^k \binom{k}{i} \cdot \prod_{0 \leq i < j \leq k} (\zeta_{li} - \zeta_{lj}).
        \]
        Thus, we only need to ensure that $\zeta_{li} \neq \zeta_{lj}$ for $i \neq j$. The proof is then finished.
\end{proof}

For $x \in [-1,1]$, $(x+1)_+ = x+1 \geq 0$, hence \autoref{Decomposition of x^s to ReLU^k} can be applied to $(x+1)^{k-j+1}_+$. We can express $N^{k+1}_i(x)$ as a one hidden layer FCNN using ReLU$^k$. Now given the output of the convolution layer $\mathcal{D}(F^{(L)}(x))$ as presented in \autoref{construction of convolution layers}, we build the fully connected layer as follows.

We first consider a simpler FCNN that only accepts one input, denoted as $\langle x,y_1 \rangle + B^{(L)}$, which is also an output of the convolution layer $\mathcal{D}(F^{(L)})(x)$. For $j = 1, \ldots, k + 1$, \autoref{Decomposition of x^s to ReLU^k} allows us to write
\[
    \begin{aligned}
        &\alpha_{ij}(\langle x,y_1 \rangle + 1)^{k-j+1}_+ \\
        ={}& \alpha_{ij}\sum_{s=0}^{k} \xi_{k-j+1,s} (\langle x,y_1 \rangle + \zeta_{k-j+1,s} +1)^k_+ \\
        ={}& \sum_{s=0}^{k} \text{sgn}(\alpha_{ij}\xi_{k-j+1,s}) 
        \bigg(\sqrt[k]{\vert\alpha_{ij} \xi_{k-j+1,s}\vert}\bigg(\langle x,y_1 \rangle + B^{(L)} \bigg) - \sqrt[k]{\vert\alpha_{ij} \xi_{k-j+1,s}\vert}\bigg(B^{(L)} - \zeta_{k-j+1,s} - 1\bigg) \bigg)^{k}_+.
    \end{aligned}
\] 
Let $w = \sqrt[k]{\vert\alpha_{ij} \xi_{k-j+1,s}\vert}$ and $b = \sqrt[k]{\vert\alpha_{ij} \xi_{k-j+1,s}\vert}(B^{(L)} - \zeta_{k-j+1,s} - 1)$, we see that $\vert w \vert \lesssim N$, $\vert b \vert \lesssim NB^{(L)} \leq NC_6^L$ where $C_6$ is the constant from \autoref{construction of convolution layers}, and 
\[
    \alpha_{ij}(\langle x,y_1 \rangle + 1)^{k-j+1}_+ = \sum_{s=0}^{k} \text{sgn}(\alpha_{ij}\xi_{k-j+1,s}) (w(\mathcal{D}(F^{(L)}(x)))_1 - b)^{k}_+.
\]
Here, $(w(\mathcal{D}(F^{(L)}(x)))_1 - b)^k_+$ can be represented by the output of one single hidden node using ReLU$^k$. Hence, $\alpha_{ij}(\langle x,y_1 \rangle + 1)^{k-j+1}_+$ is a linear combination of outputs from $k+1$ hidden nodes. A similar argument also applies to 
\[
    \begin{aligned}
        &\beta_{ij}( \langle x,y_1 \rangle + t_{k+j+1})^k_+,\\
        &\frac{N^k}{k!}(-1)^j\binom{k+1}{j}\bigg(( \langle x,y_1 \rangle -t_i)-\frac{j}{N}\bigg)^k_+ \mbox{ and }\\
        &\gamma_{ij}( \langle x,y_1 \rangle -t_{i + j -1})^k_+.
    \end{aligned}
\] 
Each of these terms can be represented as the multiplication of a sign and the output of a single hidden node. After calculating these terms, note by \eqref{interpolation spline N} that $N^{k+1}_i(\langle x,y_1 \rangle)$ is also a linear combination of them. The overall number of nodes needed to represent $N^{k+1}_i(\langle x,y_1 \rangle)$ is 
\[
    \left\{
        \begin{array}{lcl}
            (k+1)(k-i+2) + i, &  &
            \mbox{ for }  1 \leq i \leq k, \\
            k + 2, & & 
            \mbox{ for }  k+1 \leq i \leq 2N, \\
            2N+k-i+1, & &
            \mbox{ for }  2N+1 \leq i \leq 2N+k,
        \end{array}
    \right.
\] 
where the maximum number of nodes, $k^2 + 2k + 2$, is achieved when $i = 1$. By \eqref{interpolation spline Q}, $Q_N^{k+1}(l_{n_0})(\langle x,y_1 \rangle)$ can  be expressed as a linear combination of $N^{k+1}_i(\langle x,y_1 \rangle)$. The total number of nodes needed to build $Q_N^{k+1}(l_{n_0})(\langle x,y_1 \rangle)$ is 
\[
    \begin{aligned}
        &\sum_{i = 1}^{k} \bigg((k+1)(k-i+2) + i\bigg) + (2N - k)(k + 2) + \sum_{i = 2N+1}^{2N+k} \bigg(2N+k-i+1\bigg) \\
        ={}& \frac{1}{2}(k^3 + 4k^2 + 4Nk + k + 8N).
    \end{aligned}
\] 
We regard $Q_N^{k+1}(l_{n_0})(\langle x,y_1 \rangle)$ as the output of this simpler FCNN which only accepts the input $\langle x,y_1 \rangle + B^{(L)}$ and uses $(k^3 + 4k^2 + 4Nk + k + 8N)/2$ hidden nodes.

Employing the same argument, we can construct $m$ simpler FCNNs, each accepting only a single input, $\langle x,y_i \rangle + B^{(L)}$, and utilizing $(k^3 + 4k^2 + 4Nk + k + 8N)/2$ hidden nodes. These $m(k^3 + 4k^2 + 4Nk + k + 8N)/2$ hidden nodes are concatenated to form the first hidden layer $F^{(L+1)}$ of the FCNN, with the $m$ output nodes forming the second hidden layer $F^{(L+2)}$. The first layer employs $\mathrm{ReLU}^k$, while the second utilizes $\mathrm{ReLU}$, i.e., 
\[
    \begin{aligned}
        F^{(L+1)}(x) &= \mathrm{ReLU}^k\left( W^{(L+1)}\mathcal{D}\big(F^{(L)}(x)\big) - b^{(L+1)}\right) \in \mathbb{R}^{m(k^3 + 4k^2 + 4Nk + k + 8N)/2}, \\
        F^{(L+2)}(x) &= \mathrm{ReLU}\left( W^{(L+2)}F^{(L+1)}(x) - b^{(L+2)}\right) \in \mathbb{R}^{m}.
    \end{aligned}
\]

These FCNNs also employ identical parameters, thus resulting in shared weights and biases. The parameters can be represented as follows:
\begin{equation}\label{forms of weights and biases}
    \begin{aligned}
        W^{(L+1)} &=     
        \begin{bmatrix}
            \overrightarrow{w_1}  & \overrightarrow{0}         & \cdots & \overrightarrow{0}         & \overrightarrow{0}         & \cdots & \overrightarrow{0} \\
            \overrightarrow{0}    & \overrightarrow{w_1}       & \cdots & \overrightarrow{0}         & \overrightarrow{0}         & \cdots & \overrightarrow{0} \\
            \vdots                & \vdots                     & \cdots & \vdots                     & \vdots                     & \vdots & \vdots\\
            \overrightarrow{0}    & \overrightarrow{0}         & \cdots & \overrightarrow{w_1}       & \overrightarrow{0}         & \cdots & \overrightarrow{0} \\
            \overrightarrow{0}    & \overrightarrow{0}         & \cdots & \overrightarrow{0}         & \overrightarrow{w_1}       & \cdots & \overrightarrow{0} \\
        \end{bmatrix} 
        \in \mathbb{R}^{m(k^3 + 4k^2 + 4Nk + k + 8N)/2 \times \left\lfloor d_L / d\right\rfloor }, \\
        b^{(L+1)} &= 
        \begin{bmatrix}
            \overrightarrow{b_1} \\
            \vdots \\
            \overrightarrow{b_1} \\
        \end{bmatrix} \in \mathbb{R}^{m(k^3 + 4k^2 + 4Nk + k + 8N)/2}, \\
        W^{(L+2)} &=     
        \begin{bmatrix}
            \overrightarrow{w_2}^T  & \overrightarrow{0}     & \cdots & \overrightarrow{0}     & \overrightarrow{0}   \\
            \overrightarrow{0}      & \overrightarrow{w_2}^T & \cdots & \overrightarrow{0}     & \overrightarrow{0}   \\
            \vdots                  & \vdots                 & \cdots & \vdots                 & \vdots               \\
            \overrightarrow{0}      & \overrightarrow{0}     & \cdots & \overrightarrow{w_2}^T & \overrightarrow{0}   \\
            \overrightarrow{0}      & \overrightarrow{0}     & \cdots & \overrightarrow{0}     & \overrightarrow{w_2}^T \\
        \end{bmatrix} 
        \in \mathbb{R}^{m \times m(k^3 + 4k^2 + 4Nk + k + 8N)/2}, \\
        b^{(L+2)} &= b_2 \mathbf{1}_m \in \mathbb{R}^{m}, \\
    \end{aligned}
\end{equation} 
where $\overrightarrow{w_1}, \overrightarrow{b_1}, \overrightarrow{w_2}\in \mathbb{R}^{(k^3 + 4k^2 + 4Nk + k + 8N)/2}$, $\overrightarrow{0}$ is a $0$ constant vector with a suitable size, and 
\[
    b_2 = - \|Q_N^{k+1}(l_{n_0})\|_\infty.
\] 
From this construction, we see that for $1 \leq i \leq m$, 
\[
    \begin{aligned}
        \big( F^{(L+2)}(x) \big)_i &= \mathrm{ReLU}\bigg(\big(W^{(L+2)}F^{(L+1)}(x) - b^{(L+2)}\big)_i\bigg) \\
        &= Q_N^{k+1}(l_{n_0})(\langle x,y_i \rangle) + \|Q_N^{k+1}(l_{n_0})\|_\infty.
    \end{aligned}
\] 
Then we apply an additional affine transformation to yield the output for our entire network, e.g., 
\[
    \begin{aligned}
        F^{(L+3)}(x) &= W^{(L+3)} \cdot F^{(L+2)}(x) - b^{(L+3)} \\ 
        &= \sum_{i=1}^m \mu_i L_{n_0}(u)(y_i) \big( F^{(L+2)}(x) \big)_i - \sum_{i=1}^m \mu_i L_{n_0}(u)(y_i) \|Q_N^{k+1}(l_{n_0})\|_\infty \\
        &= \sum_{i=1}^m \mu_i L_{n_0}(u)(y_i) Q_N^{k+1}(l_{n_0})(\langle x,y_i \rangle).
    \end{aligned}
\] 
In conclusion, we have constructed an FCNN with output \eqref{FCNN output} using two hidden layers of width $d_{L + 1} = m(k^3 + 4k^2 + 4Nk + k + 8N)/2$ and $d_{L + 2} = m$. This FCNN also satisfies the following boundedness constraints:
\[
    \begin{aligned}
        &\|W^{(L+1)}\|_{\max} \lesssim N, \quad \|b^{(L+1)}\|_\infty \lesssim NC_6^L, \\
        &\|W^{(L+2)}\|_{\max} = \max\{\vert J_i(l_{n_0})\vert : i = 1, \ldots, 2N+k\} \leq 5 \cdot 3^{d-2} \cdot (2k+2)^{k+1} n_0^{d-1}, \\
        &\|b^{(L+2)}\|_\infty = \|Q_N^{k+1}(l_{n_0})\|_\infty \leq 5 \cdot 3^{d-2} \cdot (2k+2)^{k+1} n_0^{d-1}, \\
        &\|W^{(L+3)}\|_{\max} = \max\{\vert \mu_i L_{n_0}(u)(y_i) \vert: i = 1, \ldots, m\} \leq C_1\|u\|_\infty, \\
        &\vert b^{(L+3)} \vert = \bigg\vert \sum_{i=1}^m \mu_i L_{n_0}(u)(y_i) \|Q_N^{k+1}(l_{n_0})\|_\infty \bigg\vert \leq 5 \cdot 3^{d-2} \cdot (2k+2)^{k+1} n_0^{d-1}C_1\|u\|_\infty,
    \end{aligned}
\]
where $C_1$ is from \autoref{Theorem 2.6.3 of Dai2013Approximation} and $C_6$ is from \autoref{construction of convolution layers}. The total number of free parameters contributed by the FCNN is 
\[
    \|\overrightarrow{w_1}\|_0 + \|\overrightarrow{b_1}\|_0 + \|\overrightarrow{w_2}\|_{0} + 1 + m + 1 = \frac{3}{2} \cdot (k^3 + 4k^2 + 4Nk + k + 8N) + m + 2.
\]
The proof is then finished.

\subsection{Sobolev approximation lemma}\label{appsec: Sobolev approximation lemma}
We now present \autoref{Sobolev approximation lemma}, which asserts the approximation capability of our CNN architecture with respect to the Sobolev norm. The following lemma invokes a definition analogous to definition \eqref{Assumption for hypothesis space} to describe the CNN function space equipped with hybrid activation functions. This is denoted by $\mathcal{F}(L, L_0=2, S, d_{L+1}, d_{L+2}, \mathrm{ReLU}, \mathrm{ReLU}^k, M=\infty, \mathcal{S})$. Here, the network in this function space employs activation function $\sigma^{(l)} = \mathrm{ReLU}$ for $l = 1, \ldots, L, L + 2$ and $\sigma^{(L+1)} = \mathrm{ReLU}^k$. The condition $M=\infty$ signifies there are no constraints on the supremum norm of the output function or its derivatives.

\begin{lemma}\label{Sobolev approximation lemma}
    Let $d \geq 2$, $3 \leq S \leq d+1$ and $1 \leq p \leq \infty$. Given non-negative integers $s,k,r,n_0$ satisfying $0 \leq s < \min\{r,k+1\}$ and $n_0 \geq  1$, then for all $u \in W^r_p(\mathbb{S}^{d-1})$, there exists a network
    \[
        F^{(L+3)} \in \mathcal{F}(L, L_0 = 2, S, d_{L+1}, d_{L+2}, \mathrm{ReLU}, \mathrm{ReLU}^k, M=\infty, \mathcal{S})
    \] 
    with
    \[
        \begin{aligned}
            &L = \left\lceil \frac{C_2n_0^{d-1}d -1}{S-2} \right\rceil, \quad d_{L+1} \asymp n_0^{\frac{d + r + s - 1}{k - s + 1} + d + 1}, 
            \quad d_{L+2} \asymp n_0^{d - 1},\\
            &\mathcal{S} \asymp \max\big\{n_0^{2+\frac{d + r + s - 1}{k - s + 1}}, n_0^{d-1}\big\},
        \end{aligned}
    \] 
    and a constant $C_9$ only depending on $d, p, k$ and $s$, such that 
    \begin{equation}\label{CNN W^s_p approximation}
        \|u - F^{(L+3)}\|_{W^s_p(\mathbb{S}^{d-1})} \leq C_9n_0^{s-r}\|u\|_{W^r_p(\mathbb{S}^{d-1})}.
    \end{equation}
    Here, $C_2$ is the constant from \autoref{cubature formula}. Furthermore, the network parameters satisfy the sup-norm constraints \eqref{sup-norm constraint for parameters} with 
    \[
        \begin{aligned}
            &B_1 = C_5 ,\quad B_2 = C^L_6,\quad B_3 \lesssim \max\big\{n_0^{2+\frac{d + r + s - 1}{k - s + 1}}, 3^{d} (k+1)^{k+1} n_0^{d-1}\big\}, \\
            &B_4 \lesssim\max\big\{C_6^{L} \cdot n_0^{2+\frac{d + r + s - 1}{k - s + 1}}, 3^{d} (k+1)^{k+1} n_0^{d-1}\big\},
        \end{aligned}
    \]  
    where the constants $C_5$ and $C_6$ are from \autoref{construction of convolution layers}.
\end{lemma}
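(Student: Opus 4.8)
The plan is to realize the additive ridge approximant of \autoref{ridge function approximation} by the hybrid CNN built in \autoref{construction of convolution layers} and \autoref{construction of FCNN}, and then estimate the $W^s_p$-error through a two-term triangle inequality whose first term is the polynomial/ridge error and whose second term is the B-spline replacement error. Concretely, I would fix $m=\lceil C_2 n_0^{d-1}\rceil$ and the cubature rule $\{(\mu_i,y_i)\}_{i=1}^m$ from \autoref{cubature formula}, feed the inner products $\langle x,y_i\rangle$ (up to the common additive shift $B^{(L)}$) to the network via the convolution block of \autoref{construction of convolution layers} with $3\le S\le d+1$ and $L=\left\lceil\frac{C_2 n_0^{d-1}d-1}{S-2}\right\rceil\asymp n_0^{d-1}$, and then apply the two-hidden-layer $\mathrm{ReLU}^k$ FCNN of \autoref{construction of FCNN} with spline resolution $N$ to be chosen, whose biases absorb the shift $B^{(L)}$. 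By construction the composed network satisfies $F^{(L+3)}(x)=\sum_{i=1}^m \mu_i L_{n_0}(u)(y_i)\,Q_N^{k+1}(l_{n_0})(\langle x,y_i\rangle)$ as in \eqref{FCNN output}.

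Next I would write $u-F^{(L+3)}=\big(u-\sum_i \mu_i L_{n_0}(u)(y_i)\,l_{n_0}(\langle\cdot,y_i\rangle)\big)+\sum_i \mu_i L_{n_0}(u)(y_i)\,g(\langle\cdot,y_i\rangle)$ with $g:=l_{n_0}-Q_N^{k+1}(l_{n_0})$. The first bracket is bounded by $C_4 n_0^{s-r}\|u\|_{W^r_p(\mathbb{S}^{d-1})}$ by \autoref{ridge function approximation}. For the second, I would estimate the $W^s_p$-norm of each ridge term $g(\langle\cdot,y_i\rangle)$ by expanding the angular derivatives through the chain rule: since $D_{i,j}\langle x,y\rangle=x_iy_j-x_jy_i$ and every iterated angular derivative of $\langle x,y\rangle$ is a polynomial in the entries of $x,y$ that stays uniformly bounded on $\mathbb{S}^{d-1}$, one gets $\|g(\langle\cdot,y_i\rangle)\|_{W^s_p(\mathbb{S}^{d-1})}\lesssim \max_{0\le l\le s}\|D^l g\|_\infty$, which by \eqref{interpolation spline error} is $\lesssim n_0^{d+2k+1}N^{-(k-s+1)}$ (worst case $l=s$, using $s\le k$). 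Combining this with $\sum_i|\mu_i|\lesssim 1$ and the uniform boundedness of the de la Vallée Poussin operator $L_{n_0}$, namely $\max_i|L_{n_0}(u)(y_i)|\le\|L_{n_0}(u)\|_\infty\lesssim\|u\|_{W^r_p(\mathbb{S}^{d-1})}$ (for $p<\infty$ via a Nikolskii inequality for spherical polynomials together with $\|L_{n_0}(u)\|_p\lesssim\|u\|_p$), the second term is $\lesssim \|u\|_{W^r_p(\mathbb{S}^{d-1})}\,n_0^{d+2k+1}N^{-(k-s+1)}$.

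To conclude, I would choose $N$ so the two error contributions are of the same order $n_0^{s-r}$, i.e. $N\asymp n_0^{(d+r-s+2k+1)/(k-s+1)}=n_0^{(d+r+s-1)/(k-s+1)+2}$, giving \eqref{CNN W^s_p approximation} with $C_9=C_9(d,p,k,s)$. The architecture sizes then follow by bookkeeping: $d_{L+1}=m(k^3+4k^2+4Nk+k+8N)/2\asymp mN\asymp n_0^{(d+r+s-1)/(k-s+1)+d+1}$, $d_{L+2}=m\asymp n_0^{d-1}$, and adding the $3LS-L$ convolution parameters to the $\frac{3}{2}(k^3+4k^2+4Nk+k+8N)+m+2$ FCNN parameters yields $\mathcal{S}\asymp\max\{n_0^{(d+r+s-1)/(k-s+1)+2},n_0^{d-1}\}$. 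The sup-norm bounds $B_1,\dots,B_4$ are read off directly from \autoref{construction of convolution layers} and \autoref{construction of FCNN} after substituting $N$ and absorbing $\|u\|$-dependent factors into the implied constants.

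The step I expect to be the main obstacle is the $W^s_p$ estimate of the spline-replacement term: making the chain-rule bound $\|g(\langle\cdot,y_i\rangle)\|_{W^s_p}\lesssim\max_{l\le s}\|D^l g\|_\infty$ fully rigorous while carrying the large polynomial factor $n_0^{d+2k+1}$ coming from \eqref{interpolation spline error}, and then balancing exponents so that $N$ (hence $d_{L+1}$) lands exactly on the stated power of $n_0$. A secondary delicate point is controlling $\sum_i|\mu_i||L_{n_0}(u)(y_i)|$ uniformly in the cubature rule (bounded Lebesgue constant of $L_{n_0}$, plus a Nikolskii step when $p<\infty$) so that the final constant $C_9$ depends only on $d,p,k,s$.
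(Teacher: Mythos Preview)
Your overall plan matches the paper's proof almost exactly: build the network via \autoref{construction of convolution layers} and \autoref{construction of FCNN}, split $u-F^{(L+3)}$ into the ridge approximation error (handled by \autoref{ridge function approximation}) and the spline replacement error, then choose $N\asymp n_0^{2+(d+r+s-1)/(k-s+1)}$ to balance. The parameter counting and the sup-norm bounds are also done the same way.

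The one genuine gap is in your treatment of the spline replacement term for $p<\infty$. You propose to bound
\[
\sum_{i=1}^m |\mu_i|\,|L_{n_0}(u)(y_i)|\,\|g(\langle\cdot,y_i\rangle)\|_{W^s_p}
\]
using $\sum_i|\mu_i|\lesssim 1$ and $\max_i|L_{n_0}(u)(y_i)|\le\|L_{n_0}(u)\|_\infty$, invoking Nikolskii to pass from $\|L_{n_0}(u)\|_p$ to $\|L_{n_0}(u)\|_\infty$. But the Nikolskii inequality on $\mathbb{S}^{d-1}$ for polynomials of degree $\asymp n_0$ reads $\|P\|_\infty\lesssim n_0^{(d-1)/p}\|P\|_p$, so your bound picks up an extra factor $n_0^{(d-1)/p}$ that is \emph{not} a constant in $n_0$. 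This forces a larger $N$, hence larger $d_{L+1}$ and $\mathcal{S}$, than the lemma states, and the constant $C_9$ would no longer be independent of $n_0$. (For $p=\infty$ your argument is fine, and that is in fact the only case used downstream in \autoref{upper bound of the approximation error}, but the lemma is claimed for all $1\le p\le\infty$.)

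The paper avoids this loss by not pulling out a sup-norm of $L_{n_0}(u)$ at all. Instead it applies the weighted H\"older inequality pointwise in $x$ with weights $\mu_t\ge 0$,
\[
\Bigl|\sum_t \mu_t L_{n_0}(u)(y_t)\,D^s_{i,j}B^{k+1}_N(\langle x,y_t\rangle)\Bigr|^p
\le \Bigl(\sum_t \mu_t |L_{n_0}(u)(y_t)|^p\Bigr)\Bigl(\sum_t \mu_t |D^s_{i,j}B^{k+1}_N(\langle x,y_t\rangle)|^q\Bigr)^{p/q},
\]
and then uses a Marcinkiewicz--Zygmund type bound for the positive cubature (cited as Theorem~1 in \cite{Feng2023Generalization}),
\[
\sum_{t=1}^m \mu_t\,|L_{n_0}(u)(y_t)|^p \le C_1'^{\,p}\,\|u\|_{W^r_p(\mathbb{S}^{d-1})}^p,
\]
with $C_1'$ independent of $n_0$. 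Since $\sum_t\mu_t=1$, the second factor is bounded by $\bigl(\max_{l\le s}\|D^lB^{k+1}_N\|_\infty\bigr)^p$ via your chain-rule step, and the rest of the argument goes through exactly as you outlined with the correct exponents. So the missing ingredient in your proposal is replacing the Nikolskii step by this discrete $\ell^p$-control of $L_{n_0}(u)$ on the cubature nodes.
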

\begin{proof} 
    We first prove \eqref{CNN W^s_p approximation}. We leverage \autoref{construction of convolution layers} which provides us with the equation
    \[
        \mathcal{D}(F^{(L)}(x)) = (\langle x , y_1 \rangle, \cdots, \langle x , y_m \rangle, 0, \cdots, 0)^T + B^{(L)} {\bf 1}_{\left\lfloor \frac{d+L(S-1)}{d}\right\rfloor},
    \] 
    where $m = \left\lceil C_2n_0^{d-1} \right\rceil$ and the cubature rule $\{(\mu_i,y_i)\}_{i=1}^m$ is guaranteed by \autoref{cubature formula} for degree $4n_0$. As \autoref{construction of FCNN} stated, we can construct a fully connected neural network with two hidden layers following the convolution layers. The activation functions of these hidden layers are $\sigma^{(L+1)} = \mathrm{ReLU}^k$ and $\sigma^{(L+2)} = \mathrm{ReLU}$, respectively. This network outputs
    \[
        F^{(L+3)}(x) = \sum_{i=1}^m \mu_i L_{n_0}(u)(y_i) Q_N^{k+1}(l_{n_0})\big(\langle x,y_i \rangle\big).
    \] 
    Define $B^{k+1}_N = l_{n_0} - Q_N^{k+1}(l_{n_0})$. For $1 \leq i < j \leq d, 1 \leq p  < \infty$ and $q = p/(p-1)$, by Hölder's inequality, we have 
    \begin{equation}
        \begin{aligned}
            &\bigg\| D^s_{i,j}\bigg(F^{(L+3)} - \sum_{t=1}^m \mu_t L_{n_0}(u)(y_t) l_{n_0}(\langle \cdot,y_t \rangle) \bigg) \bigg\|_p^p \\
            ={}& \frac{1}{\omega_{d}}\int_{\mathbb{S}^{d-1}} \bigg\vert \sum_{t=1}^m \mu_t L_{n_0}(u)(y_t) D^s_{i,j} B^{k+1}_N(\langle x,y_t \rangle) \bigg\vert^p d\sigma(x) \\
            \leq{}& \frac{1}{\omega_{d}}\sum_{t=1}^m \mu_t \vert L_{n_0}(u)(y_t) \vert^p \\ 
            {}& \cdot \int_{\mathbb{S}^{d-1}} 
            \bigg( \sum_{t=1}^m \mu_t \bigg\vert \bigg(x_i\frac{\partial}{\partial x_j} - x_j\frac{\partial}{\partial x_i}\bigg)^s B^{k+1}_N(\langle x,y_t \rangle) \bigg\vert^q \bigg)^{\frac{p}{q}} d\sigma(x).
        \end{aligned}
    \end{equation} 
    It is shown by \cite[Theorem 1]{Feng2023Generalization} that, there exists a constant $C'_1$ that only depends on $d, p$ and $\eta$, such that 
    \[
        \sum_{t=1}^m \mu_t \vert L_{n_0}(u)(y_t) \vert^p \leq C'^p_1\|u\|^p_{W^r_p(\mathbb{S}^{d-1})}.
    \] 
    Further, using the chain rule, we can validate the existence of a constant $C'_2$ depending only on $s$ which ensures that 
    \[
        \bigg\vert \bigg(x_i\frac{\partial}{\partial x_j} - x_j\frac{\partial}{\partial x_i}\bigg)^s B^{k+1}_N(\langle x,y_t \rangle) \bigg\vert \leq C'_2 \sum_{i=1}^s\|D^i B^{k+1}_N\|_\infty.
    \] 
    By \eqref{interpolation spline error}, there exists a constant $C'_3$ dependent only on $k, s$ and $d$ such that 
    \[
        \sum_{i=1}^s\|D^i B^{k+1}_N\|_\infty \leq \frac{C'_3 n_0^{d+2k+1}}{N^{k-s+1}}.
    \]
    Therefore, we have 
    \[
        \begin{aligned}
            &\frac{1}{\omega_{d}}\int_{\mathbb{S}^{d-1}} \bigg( \sum_{t=1}^m \mu_t \bigg\vert \bigg(x_i\frac{\partial}{\partial x_j} - x_j\frac{\partial}{\partial x_i}\bigg)^s B^{k+1}_N(\langle x,y_t \rangle) \bigg\vert^q \bigg)^{\frac{p}{q}} d\sigma(x) \\
            \leq{}& \bigg(\frac{C'_2C'_3 n_0^{d+2k+1}}{N^{k-s+1}} \bigg)^p \bigg(\sum_{t=1}^m \mu_t\bigg)^{\frac{p}{q}}=\bigg(\frac{C'_2C'_3 n_0^{d+2k+1}}{N^{k-s+1}} \bigg)^p.
        \end{aligned}
    \] 
    Combining the aforementioned inequalities, we obtain 
    \[ 
        \bigg\| D^s_{i,j}\bigg(F^{(L+3)} - \sum_{t=1}^m \mu_t L_{n_0}(u)(y_t) l_{n_0}(\langle \cdot,y_t \rangle) \bigg) \bigg\|_p \leq \frac{C'_1C'_2C'_3 n_0^{d+2k+1} \|u\|_{W^r_p(\mathbb{S}^{d-1})}}{N^{k-s+1}}.
    \] 
    An analogous process substantiates that (with a reselection of the constants $C'_1,C'_2,$ and $C'_3$)
    \[ 
        \bigg\|F^{(L+3)} - \sum_{t=1}^m \mu_t L_{n_0}(u)(y_t) l_{n_0}(\langle \cdot,y_t \rangle) \bigg\|_p \leq \frac{C'_1C'_2C'_3 n_0^{d+2k+1} \|u\|_{W^r_p(\mathbb{S}^{d-1})}}{N^{k+1}},
    \]
    and the above analysis pertains equally when $p = \infty$. Therefore, there exists a constant $C'_4$ only depends on $d, p, k, s$ and $\eta$ such that
    \begin{equation}\label{intermediate term W^s_p bound}
        \bigg\|F^{(L+3)} - \sum_{t=1}^m \mu_t L_{n_0}(u)(y_t) l_{n_0}(\langle \cdot,y_t \rangle) \bigg\|_{W^s_p(\mathbb{S}^{d-1})} \leq \frac{C'_4 n_0^{d+2k+1} \|u\|_{W^r_p(\mathbb{S}^{d-1})}}{N^{k-s+1}}.
    \end{equation} 
    Combining the above estimates with \autoref{ridge function approximation} leads to
    \[
        \begin{aligned}
            \|u - F^{(L+3)}\|_{W^s_p(\mathbb{S}^{d-1})} &\leq \bigg\|u - \sum_{i=1}^m \mu_i L_{n_0}(u)(y_i) l_{n_0}(\langle \cdot,y_i \rangle)\bigg\|_{W^s_p(\mathbb{S}^{d-1})} + \\
            &\quad\bigg\|F^{(L+3)} - \sum_{i=1}^m \mu_i L_{n_0}(u)(y_i) l_{n_0}(\langle \cdot,y_i \rangle)\bigg\|_{W^s_p(\mathbb{S}^{d-1})} \\
            &\leq C_4n_0^{s-r}\|u\|_{W^r_p(\mathbb{S}^{d-1})} + \frac{C'_4 n_0^{d+2k+1} \|u\|_{W^r_p(\mathbb{S}^{d-1})}}{N^{k-s+1}}.
        \end{aligned}
    \] 
    Next, we select $N = \left\lceil n_0^{\frac{d+r+2k-s+1}{k-s+1}} \right\rceil = \left\lceil n_0^{2+\frac{d+r+s-1}{k-s+1}} \right\rceil$, and noting that $\eta$ is fixed beforehand, which yields 
    \[
        \|u - F^{(L+3)}\|_{W^s_p(\mathbb{S}^{d-1})} \leq C_9n_0^{s-r}\|u\|_{W^r_p(\mathbb{S}^{d-1})}
    \] 
    with a constant $C_9$ only depends on $d, p, k$ and $s$. By applying the preceding analysis with $m = \left\lceil C_2n_0^{d-1} \right\rceil$ and $N = \left\lceil n_0^{2+\frac{d+r+s-1}{k-s+1}} \right\rceil$, we derive the parameters of $\mathcal{F}$ and their corresponding sup-norm constraints. Thus we complete the proof.
\end{proof}

\subsection{Proof of \autoref{upper bound of the approximation error}}\label{appsec: Proof of upper bound of the approximation error}
Now we can complete the proof of \autoref{upper bound of the approximation error}, deriving an upper bound of the approximation error. Let 
\[
    n_0 = \left\lceil \delta^{-\frac{1}{2(r-s)}} \right\rceil.
\] 
Then by \autoref{Sobolev approximation lemma}, we can take a CNN hypothesis space 
\[
    \mathcal{F} = \mathcal{F}(L, L_0 = 2, S, d_{L+1}, d_{L+2}, \mathrm{ReLU}, \mathrm{ReLU}^{k}, M = \infty, \mathcal{S})
\]
where $L, S, k, d_{L+1}, d_{L+2}, \mathcal{S}$ are specified in \autoref{Sobolev approximation lemma}. Consequently, there exists a $u \in \mathcal{F}$ satisfying 
\begin{equation}\label{uniform approximation bound}
    \|u - u^*\|^2_{W^s_{\infty}(\mathbb{S}^{d-1})} \leq C^2_9 n_0^{2(s-r)}\|u^*\|^2_{W^r_\infty(\mathbb{S}^{d-1})} \leq 2C^2_9 \varepsilon,
\end{equation}
where $C_9$ is from \autoref{Sobolev approximation lemma}. By the triangle inequality, we can deduce that
\[
    \|u\|_{W^s_{\infty}(\mathbb{S}^{d-1})} \leq C_9 \sqrt{2\varepsilon} + \|u^*\|_{W^r_{\infty}(\mathbb{S}^{d-1})} \leq 3C_9\|u^*\|_{W^r_\infty(\mathbb{S}^{d-1})}.
\]
We then further constrain the function space with the extra sup-norm constraint \eqref{sup-norm constraint} stipulating that
\[
    M \geq 3C_9\|u^*\|_{W^r_\infty(\mathbb{S}^{d-1})}.
\] 
Then we ensure that $u \in \mathcal{F}$ remains valid and that $u$ meets the approximation bound defined in \eqref{uniform approximation bound}. Combining \eqref{uniform approximation bound} with \eqref{Sobolev norm controls PINN risk}, we conclude that 
\[
    \mathcal{R}(u_\mathcal{F}) - \mathcal{R}(u^*) \leq \mathcal{R}(u) - \mathcal{R}(u^*) \lesssim \|u - u^*\|^2_{W^s_{\infty}(\mathbb{S}^{d-1})} \leq 2C_9^2 \varepsilon.
\] 
The proof is then finished.

\section{Supplement for statistical error analysis}\label{appsec: Supplement for statistical error analysis}
Recall that we have defined $L: \mathbb{S}^{d-1} \times \mathcal{F} \to \mathbb{R}$ by
\begin{equation}\label{defL}
    L(x, u) = \vert (\mathcal{L}u)(x) - f(x) \vert^2,
\end{equation} 
which measures the residual of equation \eqref{PDEs on sphere}. Then the PINN risk is the expectation of $L$ over a uniform distribution $P$ on $\mathbb{S}^{d-1}$, given by
\[
    \mathcal{R}(u) = \mathbb{E}_P[L(X, u)].
\]
Whereas the empirical PINN risk
\[
    \mathcal{R}_n(u) = \frac{1}{n}\sum_{i=1}^n L(X_i, u)
\] 
is the empirical mean of i.i.d. sample $\{X_i\}_{i=1}^n$ from $P$.
 
Define the composition of $L$ and $\mathcal{F}$ as
\[
    L \circ  \mathcal{F} = \{g:\mathbb{S}^{d-1} \to \mathbb{R} \vert g(x) = L(x, u), u \in \mathcal{F}\}.
\]
The Rademacher complexity of $L \circ \mathcal{F}$ plays a pivotal role in bounding the statistical error. For an i.i.d. sequence $\{\varepsilon_i\}_{i=1}^n$ with $\varepsilon_i = \pm 1$ having equal probability independent of $X$ and sample data $\mathbf{x} = \{X_i\}_{i=1}^n$, define the empirical Rademacher complexity of $\mathcal{F}$ as
\[
    \mathrm{Rad}(\mathcal{F}, \mathbf{x}) = \mathbb{E}_\varepsilon \sup_{u \in \mathcal{F}} \bigg\vert \frac{1}{n}\sum_{i=1}^n \varepsilon_i u(X_i) \bigg\vert.
\]
The Rademacher complexity is calculated by taking a secondary expectation over the sample data:
\[
    \mathrm{Rad}(\mathcal{F}) := \mathbb{E}_P \mathbb{E}_\varepsilon \sup_{u \in \mathcal{F}} \bigg\vert \frac{1}{n}\sum_{i=1}^n \varepsilon_i u(X_i) \bigg\vert.
\]
Accordingly, the empirical Rademacher complexity and Rademacher complexity of $L \circ \mathcal{F}$ are defined as
\[
    \begin{aligned}
        \mathrm{Rad}(L \circ \mathcal{F}, \mathbf{x}) &:= \mathbb{E}_\varepsilon \sup_{u \in \mathcal{F}} \bigg\vert \frac{1}{n}\sum_{i=1}^n \varepsilon_i L(X_i, u) \bigg\vert, \\
        \mathrm{Rad}(L \circ \mathcal{F}) &:= \mathbb{E}_P \mathbb{E}_\varepsilon \sup_{u \in \mathcal{F}} \bigg\vert \frac{1}{n}\sum_{i=1}^n \varepsilon_i L(X_i, u) \bigg\vert.
    \end{aligned}
\]
Additionally, we will employ the following notations:
\[
    \begin{aligned}
        \widetilde{\mathcal{F}} &:= \mathcal{F} - u^* = \{u-u^* : u \in \mathcal{F}\}, \\
        D^\alpha\widetilde{\mathcal{F}} &:= \{D^\alpha(u-u^*) : u \in \mathcal{F}\}, \quad \vert \alpha \vert \leq s.
    \end{aligned}
\] 
as well as their respective Rademacher complexities in our subsequent analysis.

A frequently applied constraint on $L$ requires it to be Lipschitz with respect to $u$. This constraint allows us to bound $\mathrm{Rad}(L \circ \mathcal{F}, \mathbf{x})$ by $\mathrm{Rad}(\mathcal{F}, \mathbf{x})$ employing a contraction lemma. Within our PINN framework, note that we impose sup-norm constraints as defined in \autoref{Assumption for PDE} and \autoref{Assumption for CNN}. Consequently, for all $u_1, u_2 \in \mathcal{F} \cup \{u^*\}$, the following is true: 

\begin{equation}\label{Lipschitz of loss function}
    \begin{aligned}
        &\vert L(x, u_1) - L(x, u_2) \vert \\
        ={}& \big\vert (\mathcal{L}u_1)(x) + (\mathcal{L}u_2)(x) - 2f(x) \big\vert
        \cdot \big\vert (\mathcal{L}u_1)(x) - (\mathcal{L}u_2)(x)\big\vert \\
        \leq{}& 2\bigg(M \cdot \sum_{\vert \alpha \vert \leq s} \| a_\alpha \|_\infty + \|f\|_\infty \bigg)
        \cdot \bigg(\sum_{\vert \alpha \vert \leq s} \| a_\alpha \|_\infty \cdot \vert D^\alpha u_1(x) - D^\alpha u_2(x) \vert \bigg).
    \end{aligned}
\end{equation}

Hence, by a contraction technique, we derive the subsequent lemma.

\begin{lemma}\label{contraction lemma}
Assume that $L(x,u)$ defined by \eqref{defL} is Lipschitz with respect to $u$. Then
\[
    \mathrm{Rad}(L \circ \mathcal{F}, \mathbf{x}) \leq C_{10} \sum_{\vert \alpha \vert \leq s} \mathrm{Rad}(D^\alpha\widetilde{\mathcal{F}}, \mathbf{x}),
\]
where $C_{10}: = 4\big(M \cdot \sum_{\vert \alpha \vert \leq s} \| a_\alpha \|_\infty + \|f\|_\infty \big) \cdot \max_{\vert \alpha \vert \leq s} \| a_\alpha \|_\infty.$
\end{lemma}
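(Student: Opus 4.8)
The plan is to exploit the special structure of $L$. Since $u^*$ solves \eqref{PDEs on sphere} we have $\mathcal{L}u^*=f$, so by the linearity of $\mathcal{L}$, for every $u\in\mathcal{F}$ and $x\in\mathbb{S}^{d-1}$,
\[
    L(x,u)=\big((\mathcal{L}u)(x)-f(x)\big)^2=\big((\mathcal{L}(u-u^*))(x)\big)^2,\qquad L(x,u^*)=0 .
\]
Hence $L\circ\mathcal{F}$ is the image of the class $\mathcal{L}\widetilde{\mathcal{F}}:=\{\mathcal{L}v:v\in\widetilde{\mathcal{F}}\}$ under the scalar map $\phi(t)=t^2$. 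The sup-norm constraint \eqref{sup-norm constraint} imposed in \autoref{Assumption for CNN}, together with $f\in L^\infty(\mathbb{S}^{d-1})$ (which follows from \autoref{Assumption for PDE}), gives $\|\mathcal{L}v\|_\infty\le M\sum_{|\alpha|\le s}\|a_\alpha\|_\infty+\|f\|_\infty=:B$ uniformly over $v\in\widetilde{\mathcal{F}}$, so on $[-B,B]$ the map $\phi$ is $2B$-Lipschitz with $\phi(0)=0$. Equivalently, this Lipschitz behaviour is exactly the pointwise statement \eqref{Lipschitz of loss function} that $u\mapsto L(x,u)$ is Lipschitz in the vector $(D^\alpha u(x))_{|\alpha|\le s}$, which is the hypothesis the lemma invokes.

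Next I would apply the Ledoux--Talagrand contraction inequality in two stages. Stage one: since $L(\cdot,u)=\phi\circ(\mathcal{L}(u-u^*))$ with $\phi$ as above, the contraction lemma gives $\mathrm{Rad}(L\circ\mathcal{F},\mathbf{x})\lesssim B\,\mathrm{Rad}(\mathcal{L}\widetilde{\mathcal{F}},\mathbf{x})$. Stage two: writing $\mathcal{L}v=\sum_{|\alpha|\le s}a_\alpha D^\alpha v$ and using the triangle inequality over the finitely many multi-indices,
\[
    \mathrm{Rad}(\mathcal{L}\widetilde{\mathcal{F}},\mathbf{x})\le\sum_{|\alpha|\le s}\mathbb{E}_\varepsilon\sup_{v\in\widetilde{\mathcal{F}}}\Big|\frac{1}{n}\sum_{i=1}^n\varepsilon_i\,a_\alpha(X_i)(D^\alpha v)(X_i)\Big| ,
\]
and for each $\alpha$ I would apply the contraction lemma once more, now with the coordinatewise maps $t\mapsto a_\alpha(X_i)t$, which are $\|a_\alpha\|_\infty$-Lipschitz and vanish at $0$; this bounds the $\alpha$-th summand by a constant multiple of $\|a_\alpha\|_\infty\,\mathrm{Rad}(D^\alpha\widetilde{\mathcal{F}},\mathbf{x})$. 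Combining the two stages and using $\|a_\alpha\|_\infty\le\max_{|\beta|\le s}\|a_\beta\|_\infty$ to factor out a single coefficient bound then yields $\mathrm{Rad}(L\circ\mathcal{F},\mathbf{x})\le C_{10}\sum_{|\alpha|\le s}\mathrm{Rad}(D^\alpha\widetilde{\mathcal{F}},\mathbf{x})$ with $C_{10}=4\big(M\sum_{|\alpha|\le s}\|a_\alpha\|_\infty+\|f\|_\infty\big)\max_{|\beta|\le s}\|a_\beta\|_\infty$.

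The main obstacle is bookkeeping the constants through the contraction steps so that the accumulated multiplicative factor matches the $C_{10}$ in the statement: the sharpest form of the Ledoux--Talagrand lemma is one-sided (no absolute value), whereas $\mathrm{Rad}$ here carries an absolute value and the classes $D^\alpha\widetilde{\mathcal{F}}$ need not be symmetric or contain $0$, so one must invoke the symmetrized form and arrange the outer factor $2B$ and the coefficient peeling so the product is precisely $4B\max_\beta\|a_\beta\|_\infty$; an alternative is to perform a single vector-valued (Maurer-type) contraction applied directly to the $\ell^1$-Lipschitz estimate \eqref{Lipschitz of loss function}. A secondary, routine point is verifying that the uniform bound $\|\mathcal{L}v\|_\infty\le B$ needed to pin down the Lipschitz constant of $\phi$ genuinely follows from \eqref{sup-norm constraint} and the boundedness of $f$.
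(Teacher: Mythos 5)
Your proposal takes a genuinely different route than the paper. The paper applies a \emph{single} contraction step directly on the $\ell^1$-Lipschitz pointwise estimate \eqref{Lipschitz of loss function}: from that inequality, $L(x,\cdot)$ depends on $u$ only through the vector $\big(D^\alpha u(x)\big)_{|\alpha|\le s}$ and is Lipschitz in that vector with constant $C_{10}/2$, and one invocation of the Ledoux--Talagrand contraction (with the standard factor $2$ coming from the absolute value in the definition of $\mathrm{Rad}$) plus subadditivity of the Rademacher complexity over the multi-indices yields the bound with the stated $C_{10}$. You instead factor $L(x,u)=\phi\big(\mathcal{L}(u-u^*)(x)\big)$ with $\phi(t)=t^2$ and perform \emph{two} contraction stages: first peel off the quadratic $\phi$, then, for each $\alpha$, peel off the coefficient $a_\alpha(X_i)$. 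Each stage is an honest scalar application of Theorem 4.12 of Ledoux--Talagrand, which is arguably cleaner than the paper's one-step treatment (Theorem 4.12 is a scalar contraction, so applying it to the vector-valued dependence $(D^\alpha u(x))_\alpha$ really requires a multi-coordinate version of the contraction principle, a point the paper glosses over).

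The price of your route is the one you already flag: each stage of the contraction with absolute values pays a factor of $2$, so the bookkeeping gives $2\cdot 2B$ from stage one and $2\|a_\alpha\|_\infty$ from stage two, for a total of $8B\max_\alpha\|a_\alpha\|_\infty = 2C_{10}$ rather than $C_{10}=4B\max_\alpha\|a_\alpha\|_\infty$. To match the stated constant you can replace the second contraction by an averaging argument that exploits the linearity of $t\mapsto a_\alpha(X_i)\,t$: write $a_\alpha(X_i)/\|a_\alpha\|_\infty=\mathbb{E}[\delta_i]$ with $\delta_i\in\{\pm1\}$, use Jensen's inequality to move $\mathbb{E}_\delta$ outside the supremum, and note that $\varepsilon_i\delta_i$ has the same distribution as $\varepsilon_i$. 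That removes the factor $2$ from stage two and gives exactly $C_{10}$. Alternatively, dispense with the two-stage factoring and apply the contraction once to \eqref{Lipschitz of loss function} as the paper does, which is precisely what the expression $C_{10}=4\big(M\sum_\alpha\|a_\alpha\|_\infty+\|f\|_\infty\big)\max_\alpha\|a_\alpha\|_\infty$ is tailored to. Your verification that $\|\mathcal{L}(u-u^*)\|_\infty\le M\sum_\alpha\|a_\alpha\|_\infty+\|f\|_\infty$ from \eqref{sup-norm constraint} and the boundedness of $f$ is correct.
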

\begin{proof}
    By \eqref{Lipschitz of loss function}, for all $x \in \mathbb{S}^{d-1}$ and $u_1, u_2 \in \mathcal{F} \cup \{u^*\}$ we have 
    \[
        \begin{aligned}
            &\vert L(x, u_1) - L(x, u_2) \vert \\
            \leq{}& \frac{C_{10}}{2} \cdot \sum_{\vert \alpha \vert \leq s} \vert D^\alpha u_1(x) - D^\alpha u_2(x) \vert \\
            ={}& \frac{C_{10}}{2} \cdot \sum_{\vert \alpha \vert \leq s} \vert D^\alpha (u_1 - u^*)(x) - D^\alpha (u_2 - u^*)(x) \vert.
        \end{aligned}
    \]
    Using the contraction lemma \cite[Theorem 4.12]{Ledoux1991Probability}, the Lipschitz property of $L$ yields a bound of the empirical Rademacher complexity:
    \[
        \mathrm{Rad}(L \circ \mathcal{F}, \mathbf{x}) 
        \leq C_{10}\mathrm{Rad} \bigg( \sum_{\vert \alpha \vert \leq s} D^\alpha\widetilde{\mathcal{F}}, \mathbf{x} \bigg) 
        \leq C_{10}\sum_{\vert \alpha \vert \leq s}\mathrm{Rad}(D^\alpha\widetilde{\mathcal{F}}, \mathbf{x}).
    \] 
    The proof is then finished.
\end{proof}
Next, we aim to establish a novel localized complexity analysis to bound the statistical error by the Rademacher complexity.

\subsection{Localized complexity analysis}\label{appsec: Localized complexity analysis}
To estimate the term 
\[
    \mathcal{R}(u_n) - \mathcal{R}(u^*) - \mathcal{R}_n(u_n) + \mathcal{R}_n(u^*),
\]
we propose a novel localization approach that makes use of standard tools from empirical process analysis, including peeling, symmetrization, and Dudley's chaining. Firstly, we bound the estimated error by considering the supremum norm of an empirical process:
\begin{equation}\label{empirical process}
    \mathcal{R}(u_n) - \mathcal{R}(u^*) - \mathcal{R}_n(u_n) + \mathcal{R}_n(u^*) \leq \sup_{u \in \mathcal{F}} \mathcal{R}(u) - \mathcal{R}(u^*) - \mathcal{R}_n(u) + \mathcal{R}_n(u^*).
\end{equation} 
For all $u \in \mathcal{F}$ and $\gamma > 0$, we define an auxiliary function $g_u : \mathbb{S}^{d-1} \to \mathbb{R}$ as
\[
    g_u(x) := L(x,u) - L(x,u^*).
\] 
Furthermore, for all $\gamma > \inf_{u \in \mathcal{F}}\mathbb{E}_P(g_u(X))$, we introduce the following localized function classes:
\[
    \begin{aligned}
        \mathcal{F}_\gamma &:= \{u: \mathbb{S}^{d-1} \to \mathbb{R} \ \vert \ u \in \mathcal{F}, \mathbb{E}_P(g_u(X)) = \mathcal{R}(u) - \mathcal{R}(u^*) \leq \gamma\}, \\
        \widetilde{\mathcal{F}_\gamma} &:= \{u - u^*: \mathbb{S}^{d-1} \to \mathbb{R} \ \vert \ u \in \mathcal{F}_\gamma\}, \\
        \mathcal{G}_\gamma &:= \{g_u: \mathbb{S}^{d-1} \to \mathbb{R} \ \vert \ u \in \mathcal{F}_\gamma\}, \\
        D^\alpha \mathcal{F}_\gamma &:= \{D^\alpha u: \mathbb{S}^{d-1} \to \mathbb{R} \ \vert \ u \in \mathcal{F}_\gamma\}, \\
        D^\alpha \widetilde{\mathcal{F}_\gamma} &:= \{D^\alpha u - D^\alpha u^*: \mathbb{S}^{d-1} \to \mathbb{R} \ \vert \ u \in \mathcal{F}_\gamma\}.
    \end{aligned} 
\]
Given a sequence of i.i.d. samples $\mathbf{x} = \{X_i\}_{i=1}^n$ and some function class $\mathcal{G}$ of measurable functions on $\mathbb{S}^{d-1}$, we can define the empirical $L^2$-norm $\|\cdot\|_{L^2(\mathbf{x})}$ of $g \in \mathcal{G}$ as
\[
    \| g \|^2_{L^2(\mathbf{x})}:= \frac{1}{n} \sum_{i=1}^n g(X_i)^2,
\] and the sample expectation as
\[
    \mathbb{E}_{\mathbf{x}}g:= \frac{1}{n} \sum_{i=1}^n g(X_i).
\] 
Next, we introduce the symmetrization and Dudley's chaining lemmas, which can be found in \cite{Bartlett2005Local} and \cite{Mendelson2003Few}.

\begin{lemma}\label{symmetrization}
    Consider an i.i.d. sample sequence $\mathbf{x} = \{X_i\}_{i=1}^n$. For any function $g \in \mathcal{G}$, if $\|g\|_\infty \leq G$ and $\mathrm{Var}(g) \leq V$, then the following inequalities hold: for all $t > 0$, with probability at least $1 - \exp(-t)$, 
    \[
        \sup_{g \in \mathcal{G}} \mathbb{E}_{\mathbf{x}}g - \mathbb{E}_P g \leq 3\mathrm{Rad}(\mathcal{G}) + \sqrt{\frac{2Vt}{n}} + \frac{4Gt}{3n};
    \]
    and with probability at least $1 - 2\exp(-t)$,
    \[ 
        \sup_{g \in \mathcal{G}} \mathbb{E}_{\mathbf{x}}g - \mathbb{E}_P g \leq 6\mathrm{Rad}(\mathcal{G}, \mathbf{x}) + \sqrt{\frac{2Vt}{n}} + \frac{23Gt}{3n}.
    \]
    The above inequalities are also valid for $\sup_{g \in \mathcal{G}} \mathbb{E}_P g - \mathbb{E}_{\mathbf{x}}g$.
\end{lemma}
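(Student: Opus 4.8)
The plan is to derive this as the standard combination of a Talagrand--Bousquet concentration inequality for the supremum of a centered empirical process with a symmetrization argument, together with a bounded-differences (McDiarmid) step that replaces the distribution-dependent Rademacher complexity by its empirical version. Set $Z := \sup_{g\in\mathcal{G}}\big(\mathbb{E}_{\mathbf{x}}g - \mathbb{E}_P g\big)$. The hypotheses $\|g\|_\infty\le G$ and $\mathrm{Var}(g)\le V$, holding uniformly over $\mathcal{G}$, supply exactly what Bousquet's inequality requires: a uniform envelope $G$ and a weak variance parameter at most $V$.

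First I would apply Bousquet's form of Talagrand's inequality to $Z$, which in the averaged normalization reads: for every $t>0$, with probability at least $1-e^{-t}$,
\[
    Z \;\le\; \mathbb{E}Z + \sqrt{\frac{2t\big(V + 2G\,\mathbb{E}Z\big)}{n}} + \frac{Gt}{3n}.
\]
Next I would split the square root with $\sqrt{a+b}\le\sqrt a+\sqrt b$ and absorb the mixed term by the elementary bound $\sqrt{\mathbb{E}Z\cdot 4Gt/n}\le \tfrac12\,\mathbb{E}Z + \tfrac{2Gt}{n}$, obtaining $Z\le\tfrac32\,\mathbb{E}Z + \sqrt{2Vt/n} + c\,Gt/n$ for an explicit constant $c$. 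Classical symmetrization then gives $\mathbb{E}Z\le 2\,\mathrm{Rad}(\mathcal{G})$, hence $\tfrac32\,\mathbb{E}Z\le 3\,\mathrm{Rad}(\mathcal{G})$, and after consolidating the residual $Gt/n$ terms one reads off the first displayed inequality.

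For the second inequality I would add a desymmetrization step. Replacing one coordinate of $\mathbf{x}$ perturbs $\mathrm{Rad}(\mathcal{G},\mathbf{x})$ by at most $2G/n$, so McDiarmid's inequality gives $\mathrm{Rad}(\mathcal{G})\le\mathrm{Rad}(\mathcal{G},\mathbf{x}) + G\sqrt{2t/n}$ with probability at least $1-e^{-t}$; a union bound with the concentration event above then yields the bound in terms of $\mathrm{Rad}(\mathcal{G},\mathbf{x})$ with probability at least $1-2e^{-t}$ and the stated (larger) constants $6$ and $23/3$. Finally, applying both inequalities to $-\mathcal{G}=\{-g:g\in\mathcal{G}\}$, which has the same envelope $G$, the same variance bound $V$, and the same Rademacher complexity, gives the corresponding statements for $\sup_{g\in\mathcal{G}}\big(\mathbb{E}_P g - \mathbb{E}_{\mathbf{x}}g\big)$.

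The only genuine difficulty here is bookkeeping: carrying the numerical constants through the AM--GM splitting, the factor $2$ from symmetrization, and the McDiarmid/union-bound step so that they land precisely on $3$, $6$, $4/3$ and $23/3$ rather than on slightly larger values, which requires the refined form of Bousquet's inequality. Since the statement is recorded in this exact form in \cite{Bartlett2005Local} (and is implicit in \cite{Mendelson2003Few}), I would cite those references for the precise constants rather than re-optimizing every inequality.
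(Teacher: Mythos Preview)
Your proposal is correct and aligns with the paper's treatment: the paper does not prove this lemma at all but simply cites it as a known result from \cite{Bartlett2005Local} and \cite{Mendelson2003Few}, exactly the references you invoke at the end. Your sketch via Bousquet's inequality, symmetrization, and a McDiarmid step is the standard derivation behind those cited results, so there is nothing to compare beyond noting that you have supplied more detail than the paper itself.
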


\begin{lemma}\label{Dudley's chaining}
    Let $\mathcal{N}(\delta, \mathcal{G}, \| \cdot \|_{L^2(\mathbf{x})})$ denote the covering number of $\mathcal{G}$, taking radius $\delta$ and metric $\| \cdot \|_{L^2(\mathbf{x})}$. There holds
    \[
        \mathrm{Rad}(\{g: g \in \mathcal{G}, \| g \|_{L^2(\mathbf{x})} \leq \gamma\}, \mathbf{x}) \leq \inf_{0 < \beta < \gamma} \bigg\{4\beta + \frac{12}{\sqrt{n}} \int_{\beta}^\gamma \sqrt{\log \mathcal{N}(\delta, \mathcal{G}, \| \cdot \|_{L^2(\mathbf{x})})}d\delta \bigg\}.
    \]
\end{lemma}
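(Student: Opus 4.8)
This is the classical Dudley entropy-integral bound for the empirical Rademacher complexity, specialized to the sub-class of $\mathcal{G}$ of empirical $L^2$-radius at most $\gamma$; the plan is to run the standard chaining argument while tracking constants so that the factors $4$ and $12$ in the claimed inequality come out correctly. Fix the sample $\mathbf{x}=\{X_i\}_{i=1}^n$, write $d(g,g')=\|g-g'\|_{L^2(\mathbf{x})}$ for the empirical pseudometric, $\mathcal{G}_\gamma=\{g\in\mathcal{G}:\|g\|_{L^2(\mathbf{x})}\le\gamma\}$, and $R_n(g)=\frac1n\sum_{i=1}^n\varepsilon_i g(X_i)$, so that $\mathrm{Rad}(\mathcal{G}_\gamma,\mathbf{x})=\mathbb{E}_\varepsilon\sup_{g\in\mathcal{G}_\gamma}|R_n(g)|$. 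Two elementary facts drive everything: first, since $\varepsilon_i^2=1$, Cauchy--Schwarz gives $|R_n(h)|\le\|h\|_{L^2(\mathbf{x})}$ for every $h$, which controls the tail of the chain; second, Massart's finite-class maximal inequality gives, for any finite family $\mathcal{H}$ with $\sup_{h\in\mathcal{H}}\|h\|_{L^2(\mathbf{x})}\le\rho$, the bound $\mathbb{E}_\varepsilon\max_{h\in\mathcal{H}}|R_n(h)|\le\rho\sqrt{2\log(2|\mathcal{H}|)/n}$, which controls each individual link.

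Fix $\beta\in(0,\gamma)$ and set $\delta_j=2^{-j}\gamma$. For each $j\ge0$ pick a minimal $\delta_j$-net $\mathcal{T}_j$ of $\mathcal{G}_\gamma$ with cardinality $N_j=\mathcal{N}(\delta_j,\mathcal{G},\|\cdot\|_{L^2(\mathbf{x})})$, taking $\mathcal{T}_0=\{0\}$ since $0$ is a $\gamma$-net of $\mathcal{G}_\gamma$; let $\pi_j$ denote a nearest-point projection onto $\mathcal{T}_j$, and let $J$ be the largest index with $\delta_J\ge\beta$. For $g\in\mathcal{G}_\gamma$ telescope
\[
R_n(g)=\sum_{j=1}^{J}R_n\big(\pi_j(g)-\pi_{j-1}(g)\big)+R_n\big(g-\pi_J(g)\big),
\]
where $\pi_0(g)=0$. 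The remainder satisfies $|R_n(g-\pi_J(g))|\le\delta_J<2\beta$ by the Cauchy--Schwarz bound. For each $j$, as $g$ ranges over $\mathcal{G}_\gamma$ the link $\pi_j(g)-\pi_{j-1}(g)$ takes at most $N_jN_{j-1}\le N_j^2$ values, each of empirical norm at most $\delta_j+\delta_{j-1}=3\delta_j$, so Massart's inequality bounds its Rademacher contribution by a constant multiple of $\delta_j\sqrt{\log N_j}/\sqrt n$. Since $\delta\mapsto\mathcal{N}(\delta,\mathcal{G},\|\cdot\|_{L^2(\mathbf{x})})$ is nonincreasing, $\delta_j\sqrt{\log N_j}=2(\delta_j-\delta_{j+1})\sqrt{\log N_j}\le 2\int_{\delta_{j+1}}^{\delta_j}\sqrt{\log\mathcal{N}(\delta,\mathcal{G},\|\cdot\|_{L^2(\mathbf{x})})}\,d\delta$; summing over $j=1,\dots,J$ telescopes into $2\int_{\beta}^{\gamma}\sqrt{\log\mathcal{N}(\delta,\mathcal{G},\|\cdot\|_{L^2(\mathbf{x})})}\,d\delta$ up to a negligible piece below $\beta$. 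Taking $\mathbb{E}_\varepsilon\sup_{g\in\mathcal{G}_\gamma}$ through the telescoped sum by subadditivity and adding the remainder yields $\mathrm{Rad}(\mathcal{G}_\gamma,\mathbf{x})\le 4\beta+\frac{12}{\sqrt n}\int_\beta^\gamma\sqrt{\log\mathcal{N}(\delta,\mathcal{G},\|\cdot\|_{L^2(\mathbf{x})})}\,d\delta$; since $\beta$ was arbitrary, take the infimum.

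The only genuine difficulty is the constant bookkeeping: one must propagate the $\sqrt2$ from Massart's bound, the factor $3$ from the triangle inequality on consecutive projections, the factor $2$ from $\delta_j=2(\delta_j-\delta_{j+1})$, and the slack incurred by truncating the chain at resolution $\approx\beta$, arranging them so that the totals collapse to exactly $4\beta$ and $12/\sqrt n$. Secondary points to verify are the degenerate levels where $N_j=1$ (the corresponding link is a single function of norm $\le\delta_j$, contributing $O(\delta_j)$, which is absorbed) and the legitimacy of interchanging the supremum with $\mathbb{E}_\varepsilon$ (handled by passing to a countable subclass or via the measurability conventions standard in empirical-process theory, as in the cited references).
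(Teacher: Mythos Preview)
The paper does not supply its own proof of this lemma; it quotes it as a standard result from the empirical-process literature, citing \cite{Bartlett2005Local} and \cite{Mendelson2003Few}. Your chaining argument is exactly the standard proof that underlies those references: dyadic nets at scales $\delta_j=2^{-j}\gamma$, a telescoping decomposition of each $g$ along successive projections, Massart's finite-class maximal inequality on each link, Cauchy--Schwarz on the final remainder, and conversion of the sum to an integral via monotonicity of the covering numbers. So your approach and the paper's are the same in the sense that both defer to the classical Dudley bound; you have simply written out what the paper takes for granted.

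One minor remark on your bookkeeping: you stop the chain at the largest $J$ with $\delta_J\ge\beta$ and bound the remainder by $\delta_J<2\beta$, then remark that ``a negligible piece below $\beta$'' and the degenerate $N_j=1$ levels make up the remaining $2\beta$. It would be cleaner to say explicitly that summing the integrals $\int_{\delta_{j+1}}^{\delta_j}$ over $j=1,\dots,J$ yields $\int_{\delta_{J+1}}^{\gamma/2}$, and since $\delta_{J+1}\in[\beta/2,\beta)$ the lower limit may dip below $\beta$; one absorbs this overshoot by enlarging the additive term from $2\beta$ to $4\beta$. With that, the constants $4$ and $12$ fall out from $3\cdot 2\cdot 2=12$ (triangle inequality on links, Massart's $\sqrt{2\cdot 2}$, and the geometric factor $\delta_j=2(\delta_j-\delta_{j+1})$), matching the statement.
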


The quantity $\log \mathcal{N}(\delta, \mathcal{G}, \| \cdot \|_{L^2(\mathbf{x})})$ is recognized as the metric entropy of $\mathcal{G}$. Its upper bound can be defined by the pseudo-dimension, hence by the VC-dimension of $\mathcal{G}$, as outlined in \cite[Theorem 12.2 and Theorem 14.1]{Anthony1999Neural}.

\begin{lemma}\label{empirical metric entropy bound}
    Suppose that for every function $g \in \mathcal{G}$, $\|g\|_\infty \leq M$. For a set of points $\{x_i\}_{i=1}^n$, define $\mathcal{G}\vert_{x_1, \ldots ,x_n}:= \{g\vert_{x_1, \ldots ,x_n}:\{x_i\}_{i=1}^n \to \mathbb{R} \vert  g \in \mathcal{G}\}$. Then for all $n \geq \mathrm{PDim}(\mathcal{F})$ and $\delta > 0$, 
    \[ 
        \mathcal{N}(\delta, \mathcal{G}\vert_{x_1, \ldots ,x_n}, \| \cdot \|_\infty ) \leq \left(\frac{2enM}{\delta \mathrm{PDim}(\mathcal{G})}\right)^{\mathrm{PDim}(\mathcal{G})}.
    \]
\end{lemma}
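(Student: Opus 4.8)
The plan is to establish this as the classical covering-number bound in terms of pseudo-dimension, following \cite{Anthony1999Neural}; throughout set $d = \mathrm{PDim}(\mathcal{G})$ and fix the points $x_1,\ldots,x_n$. The first step is a quantization argument. Since the relevant metric on $\mathcal{G}|_{x_1,\ldots,x_n}$ is $\|g\|_\infty = \max_{1\le i\le n}|g(x_i)|$ and $|g(x_i)|\le M$ for every $g$, I would partition $[-M,M]$ into $K := \lceil 2M/\delta\rceil$ consecutive intervals of length $\le\delta$ and replace each $g$ by the function $\bar g$ that records, for each $x_i$, the left endpoint of the interval containing $g(x_i)$. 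Then $\|g-\bar g\|_\infty \le \delta$, so the set of distinct restrictions $\{\bar g|_{x_1,\ldots,x_n}\}$ is a $\delta$-cover of $\mathcal{G}|_{x_1,\ldots,x_n}$, and it remains only to count those quantized patterns.

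Next I would reduce that count to the growth function of the subgraph class. Let $\theta_1 < \cdots < \theta_{K-1}$ be the interior breakpoints of the partition, and consider the $n(K-1)$ points $(x_i,\theta_\ell)$ in $\mathcal{X}\times\mathbb{R}$. The bit vector $\big(\mathbbm{1}[g(x_i) > \theta_\ell]\big)_{i,\ell}$ pins down which interval each $g(x_i)$ lies in, hence determines $\bar g|_{x_1,\ldots,x_n}$; consequently the number of quantized patterns is at most the number of distinct subsets that the subgraph class $\mathcal{B} := \{\{(x,t): t < g(x)\} : g\in\mathcal{G}\}$ cuts out of these $n(K-1)$ points.

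Then I would invoke two standard facts: $\mathrm{VCDim}(\mathcal{B}) = \mathrm{PDim}(\mathcal{G}) = d$ (the defining equivalence between pseudo-shattering of $\mathcal{G}$ and VC-shattering of its subgraph class), and the Sauer--Shelah lemma, which bounds the number of such subsets by $\sum_{j=0}^d \binom{N}{j} \le (eN/d)^d$ for $N\ge d\ge 1$. Applying this with $N = n(K-1) \le 2nM/\delta$, and using $N\ge n\ge d$ by hypothesis, gives
\[
    \mathcal{N}(\delta, \mathcal{G}|_{x_1,\ldots,x_n}, \|\cdot\|_\infty) \;\le\; \sum_{j=0}^d \binom{n(K-1)}{j} \;\le\; \left(\frac{e\,n(K-1)}{d}\right)^d \;\le\; \left(\frac{2enM}{\delta d}\right)^d,
\]
with the degenerate cases ($d=0$, or $2M\le\delta$ so $K=1$ and the cover is a single point) yielding the trivial bound $1$.

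The routine parts are the quantization bookkeeping and the constant tracking (open versus closed subgraphs, the exact number of breakpoints, the ceilings), which only affect absolute constants and do not change the stated form. The single genuinely nontrivial ingredient is the identification $\mathrm{VCDim}(\mathcal{B}) = \mathrm{PDim}(\mathcal{G})$, since this is exactly what lets Sauer--Shelah perform the counting; in the paper this step, together with the passage from $\mathcal{N}_1$/$\mathcal{N}_2$ bounds to the $\mathcal{N}_\infty$ bound, is furnished by \cite[Theorems~12.2 and~14.1]{Anthony1999Neural}, so it can be cited rather than reproved.
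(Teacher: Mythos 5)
The paper states this lemma purely by citation to Theorems 12.2 and 14.1 of \cite{Anthony1999Neural} and gives no proof, so there is no internal argument of the paper's to compare against. Your reconstruction --- quantize the range $[-M,M]$ to a grid of spacing at most $\delta$, observe that the quantized restriction is determined by the dichotomy pattern of the subgraph class on the $n(K-1)$ points $(x_i,\theta_\ell)$, then apply $\mathrm{VCDim}(\mathcal{B})=\mathrm{PDim}(\mathcal{G})$ and Sauer--Shelah with $N=n(K-1)\le 2nM/\delta \ge \mathrm{PDim}(\mathcal{G})$ --- is exactly the standard argument underlying the cited result, handles the degenerate cases correctly, and yields the stated constant $(2enM/(\delta\,\mathrm{PDim}(\mathcal{G})))^{\mathrm{PDim}(\mathcal{G})}$; it is correct as outlined.
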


\begin{lemma}\label{pseudo-dimension bound}
    For a class of neural networks $\mathcal{G}$ with a fixed architecture and fixed activation functions. Then,
    \[
        \mathrm{PDim}(\mathcal{G}) \leq \mathrm{VCDim}(\mathrm{sgn}(\mathcal{G}_0)),
    \] 
    where $\mathcal{G}_0$ is a set extended from $\mathcal{G}$ by adding one extra input neuron and one extra computational neuron. The additional computational neuron is a linear threshold neuron that takes inputs from the output unit of $\mathcal{G}$ and the new input neuron.
\end{lemma}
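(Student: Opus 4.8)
The plan is to reduce pseudo-shattering for $\mathcal{G}$ to ordinary shattering for $\mathrm{sgn}(\mathcal{G}_0)$, using the fact that the appended input neuron supplies the ``movable thresholds'' $r_i$ and the appended threshold neuron converts the real-valued output of $\mathcal{G}$ into a binary label. First I would make the construction of $\mathcal{G}_0$ explicit: each member of $\mathcal{G}_0$ receives inputs $(x, y)$ with $x$ in the original input space and $y \in \mathbb{R}$, feeds $x$ through some network $g \in \mathcal{G}$, and the added linear threshold neuron outputs $\mathrm{sgn}\big(g(x) - y\big)$; hence $\mathrm{sgn}(\mathcal{G}_0) = \{(x,y) \mapsto \mathrm{sgn}(g(x) - y) : g \in \mathcal{G}\}$. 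Note that prepending one input coordinate and appending one threshold unit keeps $\mathcal{G}_0$ within the class of neural networks with a fixed architecture and fixed activations, so $\mathrm{VCDim}(\mathrm{sgn}(\mathcal{G}_0))$ is well defined and the VC-dimension machinery applies to it.

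Next I would establish the key inclusion. Let $X = \{x_1, \ldots, x_m\}$ be pseudo-shattered by $\mathcal{G}$ with witnessing reals $r_1, \ldots, r_m$; that is, for every $v \in \{0,1\}^m$ there is $g_v \in \mathcal{G}$ with $\mathrm{sgn}(g_v(x_i) - r_i) = v_i$ for all $i$. Consider the $m$ points $z_i := (x_i, r_i)$ in the enlarged input space. For an arbitrary dichotomy $v \in \{0,1\}^m$, the element of $\mathrm{sgn}(\mathcal{G}_0)$ associated with $g_v$ takes the value $\mathrm{sgn}(g_v(x_i) - r_i) = v_i$ at $z_i$; therefore $\mathrm{sgn}(\mathcal{G}_0)$ computes all $2^m$ dichotomies of $\{z_1, \ldots, z_m\}$, so this set is shattered. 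Choosing $m = \mathrm{PDim}(\mathcal{G})$ then yields $\mathrm{PDim}(\mathcal{G}) \leq \mathrm{VCDim}(\mathrm{sgn}(\mathcal{G}_0))$, which is the claim.

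The single step that genuinely requires care, rather than mere bookkeeping, is the treatment of the sign convention at $0$: whether the pseudo-shattering inequalities defining $\mathrm{PDim}$ are meant strictly and how $\mathrm{sgn}(0)$ is assigned in the definition of $\mathrm{VCDim}$. I would handle this by observing that the set of values $\{g_v(x_i)\}$ realized over the finitely many witnesses $g_v$ is finite, so each threshold $r_i$ can be perturbed by an arbitrarily small amount to avoid any equality $g_v(x_i) = r_i$ without altering a single one of the produced sign patterns; this renders the argument insensitive to whatever convention is in force. Everything else follows directly from the definitions of pseudo-dimension, VC-dimension, and the extended architecture $\mathcal{G}_0$.
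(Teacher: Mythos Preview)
Your proposal is correct and is precisely the standard argument behind this result. Note that the paper does not actually supply its own proof of this lemma: it simply cites \cite[Theorem 14.1]{Anthony1999Neural}, and your reduction---embedding pseudo-shattered points $\{x_i\}$ with thresholds $\{r_i\}$ as points $\{(x_i,r_i)\}$ in the augmented input space and realizing each dichotomy via $(x,y)\mapsto \mathrm{sgn}(g_v(x)-y)\in \mathrm{sgn}(\mathcal{G}_0)$---is exactly the textbook proof found there.
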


When \autoref{Dudley's chaining} is later employed, it is essential to control the empirical norm $\| \cdot \|_{L^2(\mathbf{x})}$ by limiting the excess risk $\mathbb{E}_P(g_u(X))$ to no more than $\gamma$. We formalize this idea in the following lemma. We define $\kappa = \# \{\alpha: \vert \alpha \vert \leq s, a_\alpha (\cdot) \neq 0\}$ to represent the number of non-zero terms in $\mathcal{L}$ satisfying \autoref{Assumption for PDE}.

\begin{lemma}\label{upper bound of the empirical norm}
    For $t > 0$, if 
    \[
        \gamma \geq \max_{\vert \alpha \vert \leq s}\bigg\{ 12M\mathrm{Rad}(D^\alpha \widetilde{\mathcal{F}_\gamma}), \frac{8C_{11}M^2t}{n} ,\frac{16M^2t}{3n} \bigg\},
    \]
    then with probability at least $1 - \kappa\exp(-t)$, there holds
    \[
        \max_{\vert \alpha \vert \leq s}\sup_{u \in \mathcal{F}_\gamma} \|D^\alpha u - D^\alpha u^*\|_{L^2(\mathbf{x})} \leq \sqrt{(C_{11} + 3)\gamma},
    \]
    where $C_{11} > 0$ is the constant satisfying the estimate \eqref{H^s estimates}:
    \begin{equation}\label{H^s estimates with C_11}
        \|u - u^*\|^2_{H^s(\mathbb{S}^{d-1})} \leq C_{11} \|\mathcal{L}(u - u^*)\|^2_{L^2(\mathbb{S}^{d-1})}.
    \end{equation}
\end{lemma}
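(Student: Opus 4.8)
The plan is to control the empirical $L^2$--norm by splitting it into its population counterpart plus a uniform deviation term, and to handle each with tools already assembled in this section. Fix a multi--index $\alpha$ with $\vert\alpha\vert\le s$ that actually occurs in $\mathcal{L}$ (i.e.\ $a_\alpha\not\equiv 0$), and for $u\in\mathcal{F}_\gamma$ put $h_{u,\alpha}(x):=\big(D^\alpha u(x)-D^\alpha u^*(x)\big)^2$, so that $\|D^\alpha u-D^\alpha u^*\|_{L^2(\mathbf{x})}^2=\mathbb{E}_{\mathbf{x}}h_{u,\alpha}$ and $\|D^\alpha u-D^\alpha u^*\|_{L^2(P)}^2=\mathbb{E}_P h_{u,\alpha}$. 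I would write $\mathbb{E}_{\mathbf{x}}h_{u,\alpha}=\mathbb{E}_P h_{u,\alpha}+(\mathbb{E}_{\mathbf{x}}-\mathbb{E}_P)h_{u,\alpha}$, bound the first summand by $C_{11}\gamma$ using the strong convexity \eqref{H^s estimates with C_11}, bound the supremum of the second over $\mathcal{F}_\gamma$ by $3\gamma$ via \autoref{symmetrization}, and finally take a union bound over the $\kappa$ active multi--indices (only these enter the Lipschitz estimate \eqref{Lipschitz of loss function} and the oracle inequality, so it suffices to treat them) to reach probability $1-\kappa\exp(-t)$.

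For the population term, every $u\in\mathcal{F}_\gamma$ satisfies $\mathcal{R}(u)-\mathcal{R}(u^*)\le\gamma$, hence by \eqref{H^s estimates with C_11} and the fact that $\|\cdot\|_{H^s(\mathbb{S}^{d-1})}$ dominates the $L^2(P)$--norm of every Euclidean partial of order at most $s$ restricted to the sphere, $\mathbb{E}_P h_{u,\alpha}\le\|u-u^*\|_{H^s(\mathbb{S}^{d-1})}^2\le C_{11}\big(\mathcal{R}(u)-\mathcal{R}(u^*)\big)\le C_{11}\gamma$. Moreover $\|D^\alpha u\|_\infty\le M$ by \eqref{sup-norm constraint} and $\|D^\alpha u^*\|_\infty\le M$ by \eqref{sup-norm constraint for u^*}, so $\|h_{u,\alpha}\|_\infty\le 4M^2=:G$ and, since $h_{u,\alpha}\le 4M^2$ pointwise, $\mathrm{Var}_P(h_{u,\alpha})\le\mathbb{E}_P h_{u,\alpha}^2\le 4M^2\,\mathbb{E}_P h_{u,\alpha}\le 4C_{11}M^2\gamma=:V$, uniformly over $u\in\mathcal{F}_\gamma$.

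Now apply the first (population--Rademacher) inequality of \autoref{symmetrization} to $\mathcal{G}=\{h_{u,\alpha}:u\in\mathcal{F}_\gamma\}$ with these $G,V$: with probability at least $1-\exp(-t)$,
\[
    \sup_{u\in\mathcal{F}_\gamma}(\mathbb{E}_{\mathbf{x}}-\mathbb{E}_P)h_{u,\alpha}\;\le\;3\,\mathrm{Rad}(\mathcal{G})+\sqrt{\tfrac{2Vt}{n}}+\tfrac{4Gt}{3n}.
\]
Since $z\mapsto z^2$ restricted to $[-2M,2M]$ is Lipschitz (with constant of order $M$) and vanishes at the origin, the Ledoux--Talagrand contraction inequality bounds $\mathrm{Rad}(\mathcal{G})$ by a constant multiple of $M\,\mathrm{Rad}(D^\alpha\widetilde{\mathcal{F}_\gamma})$, and after tracking constants one gets $3\,\mathrm{Rad}(\mathcal{G})\le 12M\,\mathrm{Rad}(D^\alpha\widetilde{\mathcal{F}_\gamma})\le\gamma$ by the first condition on $\gamma$; the middle term equals $2M\sqrt{2C_{11}\gamma t/n}$, which is $\le\gamma$ exactly when $\gamma\ge 8C_{11}M^2 t/n$, the second condition; and the last term equals $16M^2 t/(3n)\le\gamma$, the third condition. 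Summing, $\sup_{u\in\mathcal{F}_\gamma}(\mathbb{E}_{\mathbf{x}}-\mathbb{E}_P)h_{u,\alpha}\le 3\gamma$, so $\sup_{u\in\mathcal{F}_\gamma}\mathbb{E}_{\mathbf{x}}h_{u,\alpha}\le(C_{11}+3)\gamma$, i.e.\ $\sup_{u\in\mathcal{F}_\gamma}\|D^\alpha u-D^\alpha u^*\|_{L^2(\mathbf{x})}\le\sqrt{(C_{11}+3)\gamma}$; a union bound over the $\kappa$ active $\alpha$ completes the proof.

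The computation is mostly routine empirical--process bookkeeping, so the care points rather than the ideas are the obstacle: (i) the hypothesis on $\gamma$ is self--referential because $\mathrm{Rad}(D^\alpha\widetilde{\mathcal{F}_\gamma})$ again depends on $\gamma$, but since it is assumed we only invoke it; (ii) one must feed the population form of the symmetrization bound so that the assumed bound $\gamma\ge 12M\,\mathrm{Rad}(D^\alpha\widetilde{\mathcal{F}_\gamma})$ is precisely what cancels the Rademacher term, and the universal constants coming from \autoref{symmetrization} and from the contraction inequality must be aligned with the constants $12$, $8$, $16/3$ in the three conditions; and (iii) the comparison between $\|\cdot\|_{H^s(\mathbb{S}^{d-1})}$ and the $L^2$--norms of the Euclidean partials $D^\alpha$ on $\mathbb{S}^{d-1}$, where any residual dimensional constant is absorbed into $C_{11}$.
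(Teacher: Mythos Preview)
Your proposal is correct and follows essentially the same route as the paper: apply the population form of \autoref{symmetrization} to the squared class $\{(D^\alpha u-D^\alpha u^*)^2:u\in\mathcal{F}_\gamma\}$ with $G=4M^2$ and $V=4C_{11}M^2\gamma$, use the Ledoux--Talagrand contraction with Lipschitz constant $4M$ to get $3\,\mathrm{Rad}(\mathcal{G})\le 12M\,\mathrm{Rad}(D^\alpha\widetilde{\mathcal{F}_\gamma})$, match the three conditions on $\gamma$ to the three terms, add the population bound $\mathbb{E}_P[(D^\alpha u-D^\alpha u^*)^2]\le C_{11}\gamma$, and take a union bound. Your observation that the union bound need only run over the $\kappa$ active multi--indices (hence the factor $\kappa$ rather than the full count of $\alpha$ with $|\alpha|\le s$) is exactly how the stated probability $1-\kappa\exp(-t)$ is obtained, and your care point (iii) about the implicit comparison between Euclidean partials and the spherical $H^s$ norm is also tacitly absorbed into $C_{11}$ in the paper.
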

\begin{proof}
    Consider the class 
    \[
        D^\alpha\widetilde{\mathcal{F}_\gamma}^2:= \{g^2: g \in D^\alpha \widetilde{\mathcal{F}_\gamma}\} = \{(D^\alpha u - D^\alpha u^*)^2: u \in \mathcal{F}_\gamma\}.
    \] 
    Note that for all $u \in \mathcal{F}_\gamma$ and $\vert \alpha \vert \leq s$, we have $\|(D^\alpha u - D^\alpha u^*)^2\|_\infty \leq 4M^2$ and by \eqref{H^s estimates with C_11},
    \[
        \mathrm{Var}((D^\alpha u - D^\alpha u^*)^2) \leq  \mathbb{E}_P[(D^\alpha u - D^\alpha u^*)^4] \leq 4M^2C_{11}(\mathcal{R}(u) - \mathcal{R}(u^*)) \leq 4C_{11}M^2\gamma.
    \]

    Apply \autoref{symmetrization} to $D^\alpha\widetilde{\mathcal{F}_\gamma}^2$. With probability at least $1 - \exp(-t)$, we have 
    \[ 
        \begin{aligned}
            &\sup_{u \in \mathcal{F}_\gamma} \|D^\alpha u - D^\alpha u^*\|^2_{L^2(\mathbf{x})} - \mathbb{E}_P[(D^\alpha u - D^\alpha u^*)^2] \\
            \leq{}& 3\mathrm{Rad}(D^\alpha \widetilde{\mathcal{F}_\gamma}^2) + \sqrt{\frac{8C_{11}M^2\gamma t}{n}} + \frac{16M^2t}{3n}.
        \end{aligned}
    \]

    Note that $\|D^\alpha u - D^\alpha u^*\|_\infty \leq 2M$ and the square function is locally $4M$ Lipschitz on $[-2M, 2M]$. 
    By the contraction lemma \cite[Theorem 4.12]{Ledoux1991Probability}, we have
    \[
        \mathrm{Rad}(D^\alpha \widetilde{\mathcal{F}_\gamma}^2) \leq 4M \mathrm{Rad}(D^\alpha \widetilde{\mathcal{F}_\gamma}).
    \]
    Hence, if we choose 
    \[
        \gamma \geq \max_{\vert \alpha \vert \leq s}\bigg\{ 12M\mathrm{Rad}(D^\alpha \widetilde{\mathcal{F}_\gamma}), \frac{8C_{11}M^2t}{n} ,\frac{16M^2t}{3n} \bigg\},
    \]
    then with probability at least $1 - \exp(-t)$,
    \[
        \sup_{u \in \mathcal{F}_\gamma} \|D^\alpha u - D^\alpha u^*\|^2_{L^2(\mathbf{x})} \leq \sup_{u \in \mathcal{F}_\gamma} \mathbb{E}_P[(D^\alpha u - D^\alpha u^*)^2] + \gamma + \gamma + \gamma \leq (C_{11} + 3)\gamma,
    \]
    This probability inequality holds for all $\vert \alpha \vert \leq s$. By the union bound, we have finished the proof.    
\end{proof}

\subsection{Proof of \autoref{VCDim bound}}\label{appsec: Proof of VCDim bound}
This subsection provides a proof on the VC-dimension bound for our specific CNN space. For such an endeavor, the ensuing lemma is essential.

\begin{lemma}\label{lemma: number of all sign vectors}\cite[Theorem 8.3]{Anthony1999Neural}
    Suppose $p_1, \ldots, p_m$ represent polynomials of degree not exceeding $q$ in $t \leq m$ variables. Let the sign function be defined as 
    $\mathrm{sgn}(x) := \mathbf{1}_{x > 0}$ and consider 
    \[
        K:= \vert \{ (\mathrm{sgn}(p_1(x)), \ldots, \mathrm{sgn}(p_m(x))) : x \in \mathbb{R}^t \} \vert,
    \]
    where $K$ denotes the total quantity of all sign vectors furnished by $p_1, \ldots, p_m$. Then, we have $K \leq 2(2emq/t)^t$.
\end{lemma}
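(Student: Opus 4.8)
The plan is to reduce the count of realizable sign vectors to a count of connected components of a polynomial arrangement, and then to bound that count by a Milnor--Thom / Warren type estimate together with elementary binomial bookkeeping; this is the route taken in \cite[Theorems 8.2--8.3]{Anthony1999Neural}, and I would follow it.

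First I would observe that if $x,x'$ lie in the same connected component of the open set $U:=\{x\in\mathbb{R}^t : p_i(x)\neq 0 \text{ for all } i\}$, then each $p_i$ is continuous and nonvanishing along a path joining $x$ to $x'$, hence keeps a constant sign, so $x$ and $x'$ produce the same vector $(\mathrm{sgn}(p_1(\cdot)),\ldots,\mathrm{sgn}(p_m(\cdot)))$. Thus the number of sign vectors realized at points of $U$ is at most the number $C$ of connected components of $U$. To also absorb the sign vectors realized only where some $p_i$ vanishes, I would exploit the convention $\mathrm{sgn}(y)=\mathbf{1}_{y>0}$: choose one representative $x_0$ per realizable sign vector (finitely many points), pick $\delta_1,\ldots,\delta_m>0$ small enough to avoid the finitely many numbers $p_i(x_0)$, and replace $p_i$ by $p_i-\delta_i$; then $p_i(x_0)>0$ forces $p_i(x_0)-\delta_i>0$ while $p_i(x_0)\le 0$ forces $p_i(x_0)-\delta_i<0$, so the $\{0,1\}$-valued sign vector is unchanged, now $\prod_i(p_i(x_0)-\delta_i)\neq 0$, and no degree has increased. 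Hence $K$ is at most the number of connected components of $\{x:\prod_i(p_i(x)-\delta_i)\neq 0\}$, a set of exactly the same type with the same parameters $m,q,t$.

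Next I would bound this component count via Warren's theorem (Theorem 8.2 in the cited reference): for $m\ge t$, polynomials of degree $\le q$ in $t$ variables partition $\mathbb{R}^t$ into at most $2(2q)^t\sum_{i=0}^{t}\binom{m}{i}$ regions on which no polynomial vanishes. The proof of this input is itself the Milnor--Thom / Ole\u{\i}nik--Petrovski\u{\i} circle of ideas: each bounded component carries a critical point of a coordinate function restricted to a stratum cut out by some subset of the $p_i$; these critical loci are real varieties defined by polynomials of controlled degree; the classical bound $O(D(2D-1)^{t-1})$ on the number of connected components of a degree-$\le D$ real variety in $\mathbb{R}^t$, aggregated over the $\binom{m}{t}$ choices of defining subset, yields the stated estimate. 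Finally I would apply the elementary inequality $\sum_{i=0}^{t}\binom{m}{i}\le (em/t)^t$ (valid for $1\le t\le m$, using $t!\ge(t/e)^t$) to conclude
\[
    K \le 2(2q)^t\left(\frac{em}{t}\right)^t = 2\left(\frac{2emq}{t}\right)^t.
\]

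The main obstacle is supplying the Warren/Milnor--Thom component bound with the correct constants: counting components of $\{\prod_i p_i\neq 0\}$ cleanly requires a genericity reduction (so the relevant critical points are nondegenerate and the strata $\bigcap_{i\in I}Z(p_i)$ behave as expected), followed by a careful aggregation of the degree-$\le D$ variety bounds over all strata so that the final constant stays at the level $2(2emq/t)^t$ rather than inflating. In the write-up I would invoke \cite[Theorem 8.2]{Anthony1999Neural} for this geometric step and devote the argument to the reduction in the second paragraph, which is where the convention $\mathrm{sgn}(y)=\mathbf{1}_{y>0}$ does its work.
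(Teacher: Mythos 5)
This lemma is quoted in the paper directly from \cite[Theorem 8.3]{Anthony1999Neural} with no proof supplied, and your proposal correctly reconstructs the standard argument from that reference: the perturbation $p_i \mapsto p_i - \delta_i$ exploiting $\mathrm{sgn}(y)=\mathbf{1}_{y>0}$ to reduce sign-vector counting to counting connected components, Warren's component bound (their Theorem 8.2), and $\sum_{i=0}^{t}\binom{m}{i}\le (em/t)^t$. No gaps; this is essentially the same approach as the cited source.
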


Reall that in our CNN space, for $l \in [L]$, the convolution kernel $w^{(l)}$ and bias $b^{(l)}$ are the parameters of convolutional layer $l$. For $l \geq L + 1$, the weight matrices $W^{(l)}$ and bias $b^{(l)}$ are the parameters of fully connected layer $l$. Let $\mathcal{S}_l$ denote the total number of free parameters up to layer $l$ and $\theta^{(l)} \in \mathbb{R}^{\mathcal{S}_l}$ denote the concatenated free parameters vector up to layer $l$. We see that $\mathcal{S}_1 \leq \mathcal{S}_2 \leq \cdots \leq \mathcal{S}_{L + L_0 + 1} = \mathcal{S}$.

We first consider the function space $\mathcal{F}$ itself. For simplicity, let $f(x; \theta)$ denote the output of network with input $x \in \mathbb{R}^{d}$ and parameters vector $\theta \in \mathbb{R}^{\mathcal{S}}$. Thus, every $\theta \in \mathbb{R}^{\mathcal{S}}$ corresponds to a unique $f(\cdot; \theta) \in \mathcal{F}$. Assume that the VC-dimension of the network is $m$ so we can let $\{x^1, \ldots, x^m\} \subset \mathbb{R}^d$ be a shattered set, that is, 
\[
    K:= \vert \{ (\mathrm{sgn}(f(x^1; \theta)), \ldots, \mathrm{sgn}(f(x^m; \theta))) : \theta \in \mathbb{R}^{\mathcal{S}} \} \vert = 2^m.
\]
If $m \leq \mathcal{S}$, the conclusion is immediate so we assume that $m > \mathcal{S}$. We will give a bound for $K$, which will imply a bound for $m$.

To this end, we find a partition $\mathcal{P}$ of the parameter space $\mathbb{R}^{\mathcal{S}}$ such that for all $P \in \mathcal{P}$, the functions $f(x^1; \cdot), \ldots, f(x^m; \cdot)$ are polynomials on $P$. Then, 
\begin{equation}\label{equation: K}
    K \leq \sum_{P \in \mathcal{P}} \vert \{ (\mathrm{sgn}(f(x^1; \theta)), \ldots, \mathrm{sgn}(f(x^m; \theta))) : \theta \in P \} \vert,
\end{equation}
and we can bound the summand by \autoref{lemma: number of all sign vectors}. To construct such $\mathcal{P}$, we follow an inductive step to construct a sequence of partitions $\mathcal{P}_1, \mathcal{P}_2, \ldots, \mathcal{P}_{L+L_0} = \mathcal{P},$ where $\mathcal{P}_i$ is a refinement of $\mathcal{P}_{i-1}$. For $l \in [L], i \in [d_l]$ and $j \in [m]$, let 
\begin{equation}\label{input into the neuron}
    h_{l, i}(x^j; \theta) := \sum_{s=1}^{d_{l-1}} w^{(l)}_{i-s} \big(F^{(l-1)}(x^j; \theta)\big)_s  - b^{(l)}_i,
\end{equation}
i.e., $h_{l, i}(x^j; \theta)$ is the input into the $i$-th neuron in the $l$-th layer which is inactive. Hence 
\begin{equation}\label{output from the neuron}
    (F^{(l)}(x^j; \theta))_i = \sigma^{(l)}(h_{l, i}(x^j; \theta)) = \mathrm{ReLU}(h_{l, i}(x^j; \theta)),
\end{equation}
where we note that we abbreviate the dependency of $\theta$ when defining $F^{(l)}(x^j; \theta)$ in \autoref{section: The CNN Architectures} but we emphasize it in this proof.

We start with $l = 1$. Consider the concatenated vector $(\mathrm{sgn}(h_{1, i}(x^j; \cdot)))_{i \in [d_1], j \in [m]}$. Since each $h_{1, i}(x^j; \cdot)$ is a polynomial of degree 1 in $\mathcal{S}_{1} \leq \mathcal{S} < md_1$ variables, by \autoref{lemma: number of all sign vectors}, we can choose a partition $\mathcal{P}_1$ of $\mathbb{R}^{\mathcal{S}}$ such that $\vert \mathcal{P}_1 \vert \leq 2(2emd_1/\mathcal{S}_1)^{\mathcal{S}_1}$ and the concatenated vector $(\mathrm{sgn}(h_{1, i}(x^j; \cdot)))_{i \in [d_1], j \in [m]}$ is constant in each $P \in \mathcal{P}_1$. By \eqref{output from the neuron}, clearly, in each $P \in \mathcal{P}_1$, each $(F^{(1)}(x^j; \cdot))_i$ is either a polynomial of degree 1 or a zero function in $\mathcal{S}_{1}$ variables.

In the induction step, suppose that we have constructed $\mathcal{P}_1, \ldots, \mathcal{P}_{l-1}$ for an $l \in [L]$, and for $i \in [d_{l-1}], j \in [m]$, $\big(F^{(l-1)}(x^j; \cdot)\big)_i$ is a polynomial of degree at most $l - 1$ in $\mathcal{S}_{l - 1}$ variables in any $P \in \mathcal{P}_{l-1}$. By \eqref{input into the neuron} we see that for $i \in [d_{l}], j \in [m]$, $h_{l, i}(x^j; \cdot)$ is a polynomial of degree at most $l$ in $\mathcal{S}_{l} < md_l$ variables in any $P \in \mathcal{P}_{l-1}$. By \autoref{lemma: number of all sign vectors}, we can take a partition $\mathcal{P}_{P,l}$ of $P$ such that $\vert \mathcal{P}_{P,l} \vert \leq 2(2elmd_l/\mathcal{S}_l)^{\mathcal{S}_l}$ and in each $P' \in \mathcal{P}_{P,l}$, the concatenated vector $(\mathrm{sgn}(h_{l, i}(x^j; \cdot)))_{i \in [d_l], j \in [m]}$ is a fixed value vector. Define 
\[
    \mathcal{P}_l := \bigcup_{P \in \mathcal{P}_{l-1}} \mathcal{P}_{P,l}.
\]
Then by \eqref{output from the neuron}, in each $P \in \mathcal{P}_l$, each $(F^{(l)}(x^j; \cdot))_i$ is either a polynomial of degree at most $l$ or a zero function. Moreover, we have 
\[
    \vert \mathcal{P}_l \vert \leq 2(2elmd_l/\mathcal{S}_l)^{\mathcal{S}_l} \vert \mathcal{P}_{l-1} \vert.
\]

The case for $l = L+1$ needs an additional discussion. For $i \in [d_{L+1}]$ and $j \in [m]$, let
\[
    h_{L+1, i}(x^j; \theta) := \sum_{s=1}^{\lfloor d_{L}/d \rfloor } (W^{(L+1)})_{i,s} \big(\mathcal{D}\big(F^{(L)}(x^j; \theta)\big)\big)_s  - b^{(L+1)}_i.
\]
Clearly, by induction, $h_{L+1, i}(x^j; \cdot)$ is a polynomial of degree at most $L+1$ in $\mathcal{S}_{L+1} < md_{L+1}$ variables in each $P \in \mathcal{P}_{L}$. Similarly, we can take a $\mathcal{P}_{L+1}$ such that 
\[
    \vert \mathcal{P}_{L+1} \vert \leq 2(2e(L+1)md_{L+1}/\mathcal{S}_{L+1})^{\mathcal{S}_{L+1}} \vert \mathcal{P}_{L} \vert.
\] 
In each $P \in \mathcal{P}_{L+1}$, each $(F^{(L+1)}(x^j; \cdot))_i = \mathrm{ReLU}^k(h_{L+1, i}(x^j; \cdot))$ is either a polynomial of degree at most $k(L + 1)$ or a zero function. Hence we can carry on our induction to $l \geq L + 1$. We conclude that, a partition $\mathcal{P} = \mathcal{P}_{L + L_0}$ of $\mathbb{R}^\mathcal{S}$ can be constructed. In each $P \in \mathcal{P}$, $\big(F^{(L + L_0)}(x^j; \cdot)\big)_i$ is a polynomial of degree at most $k(L + 1) + L_0 - 1$ or zero function in $\mathcal{S}_{L + L_0}$ variables. Now, the $(L+L_0+1)$-th layer has a single output neuron and
\[
    f(x^j; \cdot) = \sum_{s=1}^{d_{L+L_0}} (W^{(L+L_0+1)})_{s} (F^{(L+L_0)}(x^j; \cdot))_s  - b^{(L+L_0+1)}
\]
is a polynomial of degree at most $k(L + 1) + L_0$ in $\mathcal{S}_{L + L_0 + 1} = \mathcal{S} < m$ variables in any $P \in \mathcal{P}$. By \autoref{lemma: number of all sign vectors} and \eqref{equation: K}, we conclude that
\[
    \begin{aligned}
        2^m &= K \leq 2\bigg(\frac{2e(k(L + 1) + L_0)m}{\mathcal{S}}\bigg)^{\mathcal{S}} \vert \mathcal{P} \vert \\
        &\leq 2^{L + L_0 + 1} \left(\frac{2e(k(L + 1) + L_0)m}{\mathcal{S}}\right)^{\mathcal{S}} \\
        {}&\quad \cdot \prod_{l=1}^{L + 1} \bigg(\frac{2elmd_l}{\mathcal{S}_l}\bigg)^{\mathcal{S}_l} 
        \prod_{l=L + 2}^{L+L_0} \bigg(\frac{2e(k(L + 1) + l - L - 1)md_l}{\mathcal{S}_l}\bigg)^{\mathcal{S}_l}\\
        &\leq 2^{2(L + L_0)} \bigg(4emk(L + L_0) \cdot \max\{d_{l}: l \in [L+L_0]\} \bigg)^{2\mathcal{S}(L+ L_0)}
    \end{aligned}
\]
where we use crude bounds $L + L_0 + 1 \leq 2(L + L_0), \mathcal{S}_l \leq \mathcal{S}, k(L + 1) + L_0 \leq 2k(L + L_0)$ and $d_l \leq \max\{d_{l}: l \in [L+L_0]\}$. Taking logarithms we yield that 
\[
    \begin{aligned}
        m &\leq 2(L + L_0) + 2\mathcal{S}(L+ L_0) \log_2\bigg(4emk(L + L_0) \cdot \max\{d_{l}: l \in [L+L_0]\}\bigg) \\
        &\leq 4\mathcal{S}(L + L_0)\log_2\bigg(4emk(L + L_0) \cdot \max\{d_{l}: l \in [L+L_0]\}\bigg),
    \end{aligned}
\]
which implies 
\[
    m \lesssim \mathcal{S}(L + L_0)\log \bigg(k(L + L_0) \cdot \max\{d_{l}: l \in [L+L_0]\} \bigg).
\] 

Now we turn to bound the VC-dimension of the derivatives of functions in our CNN function space. To this end, we first bound the VC-dimension of the derivative $\mathrm{sgn}(\partial \mathcal{F}/\partial x_1)$ following a similar step. For simplicity, let $f'(x ; \theta) := \partial f(x ; \theta)/\partial x_1$. In a slight abuse of notation, assume that the VC-dimension of $\mathrm{sgn}(\mathcal{F}')$ is $m > \mathcal{S}$ and let $\{x^1, \ldots, x^m\} \subset \mathbb{R}^{d}$ be a shattered set. Define 
\[
    K:= \vert \{ (\mathrm{sgn}(f'(x^1; \theta)), \ldots, \mathrm{sgn}(f'(x^m; \theta))) : \theta \in \mathbb{R}^{\mathcal{S}} \} \vert = 2^m.
\]
For $l = 1$ and $i \in [d_1]$, we have 
\begin{equation}\label{h'_1}
    h'_{1,i}(x^j; \theta) = w^{(1)}_{i - 1}.
\end{equation}
For $l = 2, \ldots, L$ and $i \in [d_l]$, we have
\begin{equation}\label{h'_l}
    h'_{l, i}(x; \theta) = \sum_{s=1}^{d_{l-1}} w^{(l)}_{i-s} \big(F^{(l-1)'}(x; \theta)\big)_s = \sum_{s=1}^{d_{l-1}} w^{(l)}_{i-s} h'_{l-1, s}(x; \theta) \mathrm{sgn}(h_{l-1, s}(x; \theta)).            
\end{equation}
For $l = L+1$ and $i \in [d_{L+1}]$, we have
\begin{equation}\label{h'_L+1}
    \begin{aligned}
        h'_{L+1, i}(x; \theta) &= \sum_{s=1}^{\lfloor d_{L}/d \rfloor } (W^{(L+1)})_{i,s} \big(\mathcal{D}\big(F^{(L)'}(x^j; \theta)\big)\big)_s \\
        &= \sum_{s=1}^{\lfloor d_{L}/d \rfloor } (W^{(L+1)})_{i,s} h'_{L, sd}(x; \theta) \mathrm{sgn}(h_{L, sd}(x; \theta)).                
    \end{aligned}
\end{equation}
For $l = L+2$ and $i \in [d_{L+2}]$, we have
\[
    \begin{aligned}
        h'_{L+2, i}(x; \theta) &= \sum_{s=1}^{d_{L+1}} (W^{(L+2)})_{i,s} \big(F^{(L+1)'}(x^j; \theta)\big)_s \\
        &= k\sum_{s=1}^{d_{L+1}} (W^{(L+1)})_{i,s} h'_{L+1, s}(x; \theta) h^{k-1}_{L+1, s}(x; \theta)\mathrm{sgn}(h_{L+1, s}(x; \theta)).
    \end{aligned}
\]
For $l \geq L+3$ and $i \in [d_l]$, we have
\[
    h'_{l, i}(x; \theta) = \sum_{s=1}^{d_{l - 1}} (W^{(l)})_{i, s} h'_{l-1, s}(x; \theta)\mathrm{sgn}(h_{l-1, s}(x; \theta)).
\]
Define $[\lfloor d_{L}/d \rfloor]d := \{d, 2d, \ldots, \lfloor d_{L}/d \rfloor d\} \subset \mathbb{Z}$. By induction, for the derivative of the network output, we can write 
\begin{equation}\label{derivative of network}
    f'(x; \theta) = k \sum_{s} \left[\left(\prod_{l=1}^{L + L_0 + 1} \theta_{l,s} \right) \cdot \left(\prod_{l=1}^{L+L_0} \mathrm{sgn}(h_{l, s_l}(x; \theta))\right) \cdot h^{k-1}_{L+1, s_{L+1}}(x; \theta)\right],
\end{equation}
where $s$ running all over $\prod_{l=1}^{L-1} [d_{l}] \times [\lfloor d_{L}/d \rfloor]d \times \prod_{l=L+1}^{L + L_0} [d_{l}] \subset \mathbb{Z}^{L+L_0}$ and $\theta_{l,s}$ is a specific parameter appears in the $l$-th layer.

Similarly, we can construct an identical partition $\mathcal{P}$ of the parameter space $\mathbb{R}^{\mathcal{S}}$ such that, in each $P \in \mathcal{P}$, $\mathrm{sgn}(h_{l, s_l}(x^{j}; \cdot))$ are constants for all $s \in \prod_{l=1}^{L-1} [d_{l}] \times [\lfloor d_{L}/d \rfloor]d \times \prod_{l=L+1}^{L + L_0} [d_{l}], l \in [L]$ and $j \in [m]$. Moreover, $h_{L+1, s_{L+1}}(x^j; \cdot)$ is a polynomial of degree at most $L + 1$. Hence, in each $P \in \mathcal{P}$, $f'(x^{j}; \cdot)$ is a polynomial of degree at most $(L + L_0 + 1) + (k - 1)(L + 1)  = k(L + 1) + L_0$ in $\mathcal{S} < m$ variables. By \autoref{lemma: number of all sign vectors} and \eqref{equation: K}, we can derive 
\[
    m \lesssim \mathcal{S}(L + L_0)\log \bigg(k(L + L_0) \cdot \max\{d_{l}: l \in [L+L_0]\} \bigg)
\] 
once more.

Notice that this bound also holds for other derivatives $\partial f(x ; \theta)/\partial x_i$ of order $1$, and they share a similar expression \eqref{derivative of network} with different $\theta_{l,s}$. With this observation, we can now consider higher order derivatives $D^\alpha, \vert \alpha \vert \geq 2$. We claim that for all $\vert \alpha \vert \geq 1$, we have 
\begin{equation}\label{higher derivative of network}
    \begin{aligned}
        &D^\alpha f(x; \theta) \\
        ={}& \bigg(\prod_{l=0}^{\vert \alpha \vert - 1}(k-l)\bigg)\sum_{s^1}\sum_{s^2}\cdots\sum_{s^{\vert \alpha \vert}}
        \left[ \left(\prod_{l=1}^{L + L_0 + 1} \theta_{l,s^1} \right) \left(\prod_{l=1}^{L+1} \theta_{l,s^2,s^1_{L+1}} \right) \right.\\
        {}&\left.\cdots \left(\prod_{l=1}^{L+1} \theta_{l,s^{\vert \alpha \vert},s^1_{L+1}} \right) \left(\prod_{l=1}^{L+L_0} \mathrm{sgn}(h_{l, s^1_l}(x; \theta))\right)\left(\prod_{l=1}^{L} \mathrm{sgn}(h_{l, s^2_l}(x; \theta))\right)\right.\\
        {}&\left. \cdots \left(\prod_{l=1}^{L} \mathrm{sgn}(h_{l, s^{\vert \alpha \vert}_l}(x; \theta))\right)
        h^{k-\vert \alpha \vert}_{L+1, s^1_{L+1}}(x; \theta) \right],
    \end{aligned}
\end{equation}
where
\[
    \begin{aligned}
        &s^1 \text{ is ranging  all over } \prod_{l=1}^{L-1} [d_{l}] \times [\lfloor d_{L}/d \rfloor]d \times \prod_{l=L+1}^{L + L_0} [d_{l}] \subset \mathbb{Z}^{L+L_0}, \\
        &s^2, \ldots ,s^{\vert \alpha \vert} \text{ is ranging all over } \prod_{l=1}^{L-1} [d_{l}] \times [\lfloor d_{L}/d \rfloor]d \subset \mathbb{Z}^{L}, 
    \end{aligned}
\]
and $\theta_{l,s^1}, \theta_{l,s^2,s^1_{L+1}}, \ldots \theta_{l,s^{\vert \alpha \vert},s^1_{L+1}}$ are some parameters appearing in the $l$-th layer and the specific locations are determined by their subscripts.

We prove this claim by induction on $\vert \alpha \vert$. For $\vert \alpha \vert = 1$, \eqref{higher derivative of network} is exactly \eqref{derivative of network}. Now assume that the claim holds for all $\vert \beta \vert \leq \vert \alpha \vert$. Without loss of generality, assume that $\alpha_1 \geq 1$ and $\alpha = \beta + (1, 0, 0, \cdots ,0)$. By induction, we calculate that 
\[
    \begin{aligned}
        &D^\alpha f(x; \theta) = \left(D^\beta f(x; \theta)\right)' \\
        ={}& \bigg(\prod_{l=0}^{\vert \beta \vert - 1}(k-l)\bigg)\sum_{s^1}\sum_{s^2}\cdots\sum_{s^{\vert \beta \vert}}
        \left[ \left(\prod_{l=1}^{L + L_0 + 1} \theta_{l,s^1} \right) \left(\prod_{l=1}^{L+1} \theta_{l,s^2,s^1_{L+1}} \right)\right.\\ 
            {}& \left. \cdots \left(\prod_{l=1}^{L+1} \theta_{l,s^{\vert \beta \vert},s^1_{L+1}} \right) \left(\prod_{l=1}^{L+L_0} \mathrm{sgn}(h_{l, s^1_l}(x; \theta))\right)\left(\prod_{l=1}^{L} \mathrm{sgn}(h_{l, s^2_l}(x; \theta))\right)\right.\\
        {}& \left. \cdots \left(\prod_{l=1}^{L} \mathrm{sgn}(h_{l, s^{\vert \beta \vert}_l}(x; \theta))\right)
        \left(h^{k-\vert \beta \vert}_{L+1, s^1_{L+1}}(x; \theta)\right)' \right],
    \end{aligned}
\]
where 
\[
    \left(h^{k-\vert \beta \vert}_{L+1, s^1_{L+1}}(x; \theta)\right)' = (k-\vert \beta \vert)h^{k-\vert \beta \vert-1}_{L+1, s^1_{L+1}}(x; \theta)h'_{L+1, s^1_{L+1}}(x; \theta).
\]
Plugging in \eqref{h'_L+1}\eqref{h'_l}\eqref{h'_1} yield the conclusion immediately.

Now with the expression \eqref{higher derivative of network}, we are ready to bound the VC-dimension of $\mathrm{sgn}(D^\alpha \mathcal{F})$. Again, in a slight abuse of notation, assume that the VC-dimension of $\mathrm{sgn}(D^\alpha \mathcal{F})$ is $m > \mathcal{S}$ and let $\{x^1, \ldots, x^m\} \subset \mathbb{R}^{d}$ be a shattered set. Define 
\[
    K:= \vert \{ (\mathrm{sgn}(D^\alpha f(x^1; \theta)), \ldots, \mathrm{sgn}(D^\alpha f(x^m; \theta))) : \theta \in \mathbb{R}^{\mathcal{S}} \} \vert = 2^m.
\]
By \eqref{higher derivative of network}, clearly, in each $P \in \mathcal{P}$, for all $j \in [m]$, $D^\alpha f(x^j; \cdot)$ is a polynomial of degree at most 
\[
    (L + L_0 + 1) + (\vert \alpha \vert - 1)(L + 1) + (k - \vert \alpha \vert)(L + 1) = L(k + 1) + L_0
\]
in $\mathcal{S} < m$ variables. Hence, we yield \[m \lesssim \mathcal{S}(L + L_0)\log \bigg(k(L + L_0) \cdot \max\{d_{l}: l \in [L+L_0]\} \bigg)\] again. The proof is then finished.

\subsection{Proof of the oracle inequality}\label{appsec: Proof of the oracle inequality}
In this subsection, we will give an estimate of the statistical error by proving the oracle inequality \eqref{oracle inequality}. To this end, we first prove a series of lemmas. Recall that $\mathrm{VC}_{\mathcal{F}}$ is given by \eqref{VC-dimension of F}, the function classes $\mathcal{F}_{\gamma}$ and $\widetilde{\mathcal{F}_\gamma}$ are defined at the beginning of \autoref{appsec: Localized complexity analysis}.

\begin{lemma}\label{one step upper bound}
    Given a dataset $\mathbf{x} = \{X_i\}_{i=1}^n$, if there exists $C_{12} > 0$ and $\gamma > 0$ satisfying
    \begin{align}
        \max_{\vert \alpha \vert \leq s}\sup_{u \in \mathcal{F}_\gamma}  \|D^\alpha u - D^\alpha u^*\|_{L^2(\mathbf{x})} &\leq C_{12} \sqrt{\gamma}, \label{condition of one step upper bound} \\
        (1/n)^2 &\leq \gamma, \notag \\
        \max_{\vert \alpha \vert \leq s} \mathrm{PDim}(D^\alpha \mathcal{F}) \vee (4eM/C_{12})^2 &\leq n, \notag
    \end{align} 
    then we have 
    \[
        \max_{\vert \alpha \vert \leq s} \mathrm{Rad}(D^\alpha\widetilde{\mathcal{F}_\gamma}, \mathbf{x}) \lesssim \sqrt{\frac{\mathrm{VC}_{\mathcal{F}}}{n} \gamma \log n},
    \]
    Moreover, for any $t> 0$, with a probability exceeding $1 - 2\exp(-t)$, there holds
    \[
        \sup_{u \in \mathcal{F}_\gamma} \mathbb{E}_P g_{u} - \mathbb{E}_{\mathbf{x}}g_{u} \lesssim 6 C_{10} \kappa \sqrt{\frac{\mathrm{VC}_{\mathcal{F}}}{n} \gamma \log n} + \sqrt{\frac{2C_{10}M\gamma t}{n}} + \frac{23C_{10}Mt}{3n}.
    \]
    Here, $M>0$ is the constant required in \autoref{Assumption for CNN} and $C_{10} > 0$ is from \autoref{contraction lemma}.
\end{lemma}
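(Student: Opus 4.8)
The statement has two parts, and I would prove the Rademacher bound first and then deduce the concentration inequality from it. For the Rademacher bound I localize each derivative class using hypothesis~\eqref{condition of one step upper bound}, convert the pseudo-dimension of the network-derivative classes into a metric-entropy estimate, and run Dudley's chaining (\autoref{Dudley's chaining}). For the concentration bound I apply the empirical-Rademacher form of the symmetrization inequality (\autoref{symmetrization}) to the loss-increment class $\mathcal{G}_\gamma=\{g_u:u\in\mathcal{F}_\gamma\}$, and bound $\mathrm{Rad}(\mathcal{G}_\gamma,\mathbf{x})$ through the contraction lemma (\autoref{contraction lemma}) together with the first part.

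\textbf{Step 1 (Rademacher complexity of the localized derivative classes).} Fix $|\alpha|\le s$. By hypothesis~\eqref{condition of one step upper bound}, $D^\alpha\widetilde{\mathcal{F}_\gamma}\subseteq\{g\in D^\alpha\widetilde{\mathcal{F}}:\|g\|_{L^2(\mathbf{x})}\le C_{12}\sqrt{\gamma}\}$, and every such $g$ obeys $\|g\|_\infty\le 2M$ by \eqref{sup-norm constraint} and \autoref{Assumption for CNN}. Since covering numbers are invariant under the translation $D^\alpha\widetilde{\mathcal{F}}=D^\alpha\mathcal{F}-D^\alpha u^*$, and $\|\cdot\|_{L^2(\mathbf{x})}\le\|\cdot\|_\infty$ on the sample, \autoref{empirical metric entropy bound} (applicable because the hypotheses give $n\ge\mathrm{PDim}(D^\alpha\mathcal{F})$) yields
\[
    \log\mathcal{N}\big(\delta,D^\alpha\widetilde{\mathcal{F}},\|\cdot\|_{L^2(\mathbf{x})}\big)\lesssim\mathrm{PDim}(D^\alpha\mathcal{F})\,\log\frac{nM}{\delta}.
\]
By \autoref{pseudo-dimension bound} together with a rerun of the polynomial-partition counting from the proof of \autoref{VCDim bound} on the one-extra-neuron extension $(D^\alpha\mathcal{F})_0$, we get $\mathrm{PDim}(D^\alpha\mathcal{F})\lesssim\mathrm{VC}_{\mathcal{F}}$, with $\mathrm{VC}_{\mathcal{F}}$ as in \eqref{VC-dimension of F}. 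Feeding this entropy bound into \autoref{Dudley's chaining} with outer radius $C_{12}\sqrt{\gamma}$ and letting the truncation $\beta\downarrow0$ (the entropy integral converges), the chaining integral is $\lesssim C_{12}\sqrt{\gamma}\,\sqrt{\log\big(nM/(C_{12}\sqrt{\gamma})\big)}$; the hypotheses $\gamma\ge(1/n)^2$ and $(4eM/C_{12})^2\le n$ bound this logarithm by $\lesssim\log n$, so $\max_{|\alpha|\le s}\mathrm{Rad}(D^\alpha\widetilde{\mathcal{F}_\gamma},\mathbf{x})\lesssim\sqrt{\mathrm{VC}_{\mathcal{F}}\,\gamma\log n/n}$.

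\textbf{Step 2 (from Rademacher complexity to concentration).} Since $L(\cdot,u^*)\equiv0$, every $g_u=L(\cdot,u)-L(\cdot,u^*)$ with $u\in\mathcal{F}_\gamma$ equals $|(\mathcal{L}u)(\cdot)-f(\cdot)|^2$, hence $\|g_u\|_\infty\le C_0$ by the sup-norm constraints, and by the variance inequality~\eqref{variance bound} and $u\in\mathcal{F}_\gamma$ we get $\mathrm{Var}(g_u)\le\mathbb{E}_P[g_u^2]\le C_0(\mathcal{R}(u)-\mathcal{R}(u^*))\le C_0\gamma$, where $C_0\lesssim C_{10}M$ with a constant depending only on $d,s$ and the coefficients of $\mathcal{L}$. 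Applying the empirical-Rademacher form of \autoref{symmetrization} to $\mathcal{G}_\gamma$ gives, with probability at least $1-2\exp(-t)$,
\[
    \sup_{u\in\mathcal{F}_\gamma}\mathbb{E}_Pg_u-\mathbb{E}_{\mathbf{x}}g_u\le 6\,\mathrm{Rad}(\mathcal{G}_\gamma,\mathbf{x})+\sqrt{\frac{2C_{10}M\gamma t}{n}}+\frac{23C_{10}Mt}{3n}.
\]
Finally $\mathrm{Rad}(\mathcal{G}_\gamma,\mathbf{x})=\mathrm{Rad}(L\circ\mathcal{F}_\gamma,\mathbf{x})$ by translation invariance, and the contraction estimate of \autoref{contraction lemma}—whose proof uses only the pointwise Lipschitz bound \eqref{Lipschitz of loss function}, valid on $\mathcal{F}_\gamma\cup\{u^*\}$—gives $\mathrm{Rad}(L\circ\mathcal{F}_\gamma,\mathbf{x})\le C_{10}\sum_{|\alpha|\le s}\mathrm{Rad}(D^\alpha\widetilde{\mathcal{F}_\gamma},\mathbf{x})\le C_{10}\kappa\max_{|\alpha|\le s}\mathrm{Rad}(D^\alpha\widetilde{\mathcal{F}_\gamma},\mathbf{x})$. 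Combining with Step~1 produces the asserted bound.

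\textbf{Main obstacle.} The delicate part is Step~1: one must (i) justify $\mathrm{PDim}(D^\alpha\mathcal{F})\lesssim\mathrm{VC}_{\mathcal{F}}$ by checking that appending one input neuron and one linear-threshold neuron leaves the network within the scope of the polynomial-partition counting underlying \autoref{VCDim bound}, and (ii) execute the chaining estimate so that the logarithmic factor stays at $\log n$ rather than a larger power—this is precisely where the hypotheses $\gamma\ge(1/n)^2$ and $(4eM/C_{12})^2\le n$ are consumed.
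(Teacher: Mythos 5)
Your proposal is correct and follows essentially the same route as the paper: localize each derivative class via hypothesis~\eqref{condition of one step upper bound}, convert pseudo-dimension to metric entropy via Lemma~\ref{empirical metric entropy bound}, run Dudley's chaining, then apply the empirical-Rademacher symmetrization of Lemma~\ref{symmetrization} to $\mathcal{G}_\gamma$ together with the contraction estimate of Lemma~\ref{contraction lemma}. The only cosmetic difference is that you send the truncation level $\beta\downarrow 0$ in Dudley's bound and estimate the full entropy integral, whereas the paper picks $\beta = C_{12}\sqrt{\mathrm{PDim}(D^\alpha\mathcal{F})\gamma/n}$ explicitly; both choices yield the same $\sqrt{\mathrm{VC}_{\mathcal{F}}\gamma\log n/n}$ rate once the hypotheses $\gamma\ge(1/n)^2$ and $n\ge(4eM/C_{12})^2$ are used to control the logarithm.
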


\begin{proof}
    Consider the class 
    \[
        \mathcal{G}_\gamma= \{g_u: \mathbb{S}^{d-1} \to \mathbb{R} \ \vert \ u \in \mathcal{F}_\gamma\}.
    \] 
    For all $g_u \in \mathcal{G}_\gamma, \|g_u\|_\infty \leq C_{10}M, \mathrm{Var}(g_u) \leq \mathbb{E}_P(g_u^2) \leq C_{10}M\gamma$. We can apply \autoref{symmetrization} to yield that, with probability at least $1 - 2\exp(-t)$, 
    \[
        \sup_{u \in \mathcal{F}_\gamma} \mathbb{E}_P g_{u} - \mathbb{E}_{\mathbf{x}}g_{u} \leq 6\mathrm{Rad}(\mathcal{G}_\gamma, \mathbf{x}) + \sqrt{\frac{2C_{10}M\gamma t}{n}} + \frac{23C_{10}Mt}{3n}.
    \]
    To bound the empirical Rademacher complexity term, we apply \autoref{contraction lemma}, the condition \eqref{condition of one step upper bound} and \autoref{Dudley's chaining} to yield
    \[
        \begin{aligned}
            &\mathrm{Rad}(\mathcal{G}_\gamma, \mathbf{x}) \\
            \leq{}& C_{10} \sum_{\vert \alpha \vert \leq s}\mathrm{Rad}(D^\alpha \widetilde{\mathcal{F}_\gamma}, \mathbf{x}) \\
            \leq{}& C_{10} \sum_{\vert \alpha \vert \leq s}\mathrm{Rad}(\{D^\alpha u - D^\alpha u^*: u \in \mathcal{F}, \|D^\alpha u - D^\alpha u^*\|_{L^2(\mathbf{x})} \leq C_{12}\sqrt{\gamma}\}, \mathbf{x}) \\
            \leq{}& C_{10} \sum_{\vert \alpha \vert \leq s}\inf_{0 < \beta < C_{12}\sqrt{\gamma}} \bigg\{4\beta + \frac{12}{\sqrt{n}} \int_{\beta}^{C_{12}\sqrt{\gamma}} \sqrt{\log \mathcal{N}(\delta, D^\alpha \mathcal{F}, \| \cdot \|_{L^2(\mathbf{x})})}d\delta \bigg\} \\
            \leq{}& C_{10} \sum_{\vert \alpha \vert \leq s}\inf_{0 < \beta < C_{12}\sqrt{\gamma}} \bigg\{4\beta + \frac{12}{\sqrt{n}} \int_{\beta}^{C_{12}\sqrt{\gamma}} \sqrt{\log \mathcal{N}(\delta, D^\alpha \mathcal{F}\vert_{x_1, \ldots ,x_n}, \| \cdot \|_\infty)}d\delta \bigg\}.
        \end{aligned}
    \]
    When $n \geq \max_{\vert \alpha \vert \leq s} \mathrm{PDim}(D^\alpha \mathcal{F})$, by \autoref{empirical metric entropy bound}, with 
    \[
        \beta = C_{12} \sqrt{\mathrm{PDim}(D^\alpha \mathcal{F}) \gamma/ n} \leq C_{12}\sqrt{\gamma}
    \] 
    we have 
    \[
        \begin{aligned}
            &\inf_{0 < \beta < C_{12}\sqrt{\gamma}} \bigg\{4\beta + \frac{12}{\sqrt{n}} \int_{\beta}^{C_{12}\sqrt{\gamma}} \sqrt{\log \mathcal{N}(\delta, D^\alpha \mathcal{F}\vert_{x_1, \ldots ,x_n}, \| \cdot \|_\infty)}d\delta \bigg\} \\ 
            \leq{}& \inf_{0 < \beta < C_{12}\sqrt{\gamma}} \bigg\{4\beta + \frac{12}{\sqrt{n}} \int_{\beta}^{C_{12}\sqrt{\gamma}} \sqrt{\mathrm{PDim}(D^\alpha \mathcal{F}) \log \left(\frac{2enM}{\delta \mathrm{PDim}(D^\alpha \mathcal{F})}\right)} d\delta \bigg\}\\  
            \leq{}& 16 \sqrt{\frac{\mathrm{PDim}(D^\alpha \mathcal{F})\gamma}{n} \left(\log \frac{4eM}{C_{12}\sqrt{\gamma}} + \frac{3}{2}\log n\right)}.
        \end{aligned}
    \]
    If we choose $\gamma \geq (1/n)^2$ and $n \geq (4eM/C_{12})^2$, then 
    \[
        16 \sqrt{\frac{\mathrm{PDim}(D^\alpha \mathcal{F})\gamma}{n} \left(\log \frac{4eM}{C_{12}\sqrt{\gamma}} + \frac{3}{2}\log n\right)} \leq 32 \sqrt{\frac{\mathrm{PDim}(D^\alpha \mathcal{F})\gamma}{n}\log n}.
    \]
    By \autoref{pseudo-dimension bound} and \autoref{VCDim bound}, note that 
    \[
        \mathrm{VCDim}(\mathrm{sgn}((D^\alpha \mathcal{F})_0)) \asymp \mathrm{VCDim}(\mathrm{sgn}(D^\alpha \mathcal{F}))
    \] 
    we have 
    \[
        32 \sqrt{\frac{\mathrm{PDim}(D^\alpha \mathcal{F})\gamma}{n}\log n} \leq 32 \sqrt{\frac{\mathrm{VCDim}(\mathrm{sgn}((D^\alpha \mathcal{F})_0))}{n}\gamma \log n} \lesssim \sqrt{\frac{\mathrm{VC}_{\mathcal{F}}}{n} \gamma \log n}.
    \]
    We conclude that, if $\gamma \geq (1/n)^2$ and $n \geq \max_{\vert \alpha \vert \leq s} \mathrm{PDim}(D^\alpha \mathcal{F}) \vee (4eM/C_{12})^2$, then 
    \[
        \max_{\vert \alpha \vert \leq s} \mathrm{Rad}(D^\alpha \widetilde{\mathcal{F}_\gamma}, \mathbf{x}) \lesssim \sqrt{\frac{\mathrm{VC}_{\mathcal{F}}}{n}\gamma \log n}, \quad \mathrm{Rad}(\mathcal{G}_\gamma, \mathbf{x}) \lesssim C_{10} \kappa \sqrt{\frac{\mathrm{VC}_{\mathcal{F}}}{n}\gamma \log n}
    \]
    and with probability at least $1 - 2\exp(-t)$, 
    \[
        \sup_{u \in \mathcal{F}_\gamma} \mathbb{E}_P g_{u} - \mathbb{E}_{\mathbf{x}}g_{u} \lesssim 6 C_{10} \kappa \sqrt{\frac{\mathrm{VC}_{\mathcal{F}}}{n}\gamma \log n} + \sqrt{\frac{2C_{10}M\gamma t}{n}} + \frac{23C_{10}Mt}{3n}.
    \] 
    The proof is then finished.    
\end{proof}
    
The critical radius, denoted as $\gamma_*$, is defined as the smallest non-zero fixed point that satisfies the inequality 
\[
    \gamma \geq \max_{\vert \alpha \vert \leq s}\bigg\{ 12M\mathrm{Rad}(D^\alpha \widetilde{\mathcal{F}_\gamma}), \frac{8C_{11}M^2t}{n} ,\frac{16M^2t}{3n} \bigg\},
\]
where $C_{11} > 0$ is the constant from \eqref{H^s estimates with C_11}. Therefore, $\gamma_*$ can be expressed as 
\begin{equation}\label{critical radius}
    \gamma_* := \inf\bigg\{ \gamma > 0: \gamma' \geq \max_{\vert \alpha \vert \leq s} \bigg\{ 12M\mathrm{Rad}(D^\alpha \widetilde{\mathcal{F}_{\gamma'}}), \frac{8C_{11}M^2t}{n} ,\frac{16M^2t}{3n}\bigg\}, \forall \gamma' \geq \gamma \bigg\}.
\end{equation}
    
This $\gamma_*$ is a positive value that relies on $t > 0$.
An upper bound for the critical radius can be determined, shedding light on the inherent complexity of the problem to some extent.

\begin{lemma}\label{upper bound of critical radius}
    Assume that $t \geq 1$ and 
    \[
        n \geq \max_{\vert \alpha \vert \leq s} \mathrm{PDim}(D^\alpha \mathcal{F}) \vee \frac{8e^2M^2}{C_{11} + 3}.
    \]
    Here, $C_{11} > 0$ is the constant from \eqref{H^s estimates with C_11}. Then 
    \[
        \gamma_* \lesssim 144 M^2 \frac{\mathrm{VC}_{\mathcal{F}}}{n}\log n + 48\kappa M^2\exp(-t) + \frac{16C_{11}M^2t}{n} + \frac{32M^2t}{3n}.
    \]
\end{lemma}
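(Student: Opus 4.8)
The plan is to combine the two lemmas just proved — \autoref{upper bound of the empirical norm} and \autoref{one step upper bound} — with the monotonicity hidden in the definition \eqref{critical radius} of $\gamma_*$. Write
\[
\Psi(\gamma):=\max_{|\alpha|\le s}\Big\{12M\,\mathrm{Rad}(D^\alpha\widetilde{\mathcal{F}_\gamma}),\ \tfrac{8C_{11}M^2t}{n},\ \tfrac{16M^2t}{3n}\Big\},
\]
so that $\gamma_*=\inf\{\gamma>0:\gamma'\ge\Psi(\gamma')\ \text{for all }\gamma'\ge\gamma\}$. The first step is to note that $\gamma\mapsto\mathrm{Rad}(D^\alpha\widetilde{\mathcal{F}_\gamma})$, hence $\Psi$, is non-decreasing, since $\gamma_1\le\gamma_2$ gives $\mathcal{F}_{\gamma_1}\subseteq\mathcal{F}_{\gamma_2}$ and Rademacher complexity is monotone in the class; and $\Psi$ is bounded, since $\|D^\alpha u-D^\alpha u^*\|_\infty\le 2M$ forces $\mathrm{Rad}(D^\alpha\widetilde{\mathcal{F}_\gamma})\le 2M$. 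From monotonicity alone one gets that $\gamma_*$ is a genuine fixed point, $\Psi(\gamma_*)=\gamma_*$: for every $\gamma'>\gamma_*$ one has $\gamma'\ge\Psi(\gamma')$, and non-decreasingness gives $\Psi(\gamma_*)\le\Psi(\gamma')\le\gamma'$, whence $\Psi(\gamma_*)\le\gamma_*$; conversely if $\Psi(\gamma_*)<\gamma_*$ then for $\gamma'\in(\Psi(\gamma_*),\gamma_*)$ monotonicity yields $\Psi(\gamma')\le\Psi(\gamma_*)<\gamma'$, which together with $\gamma'\ge\Psi(\gamma')$ for $\gamma'\ge\gamma_*$ would place the whole interval $(\Psi(\gamma_*),\gamma_*)$ in the defining set, contradicting the minimality of $\gamma_*$.

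The identity $\Psi(\gamma_*)=\gamma_*$ is exactly what is needed to break the apparent circularity, because the hypothesis of \autoref{upper bound of the empirical norm} at $\gamma=\gamma_*$ is precisely $\gamma_*\ge\Psi(\gamma_*)$. Taking $C_{12}:=\sqrt{2(C_{11}+3)}$ — so that $(4eM/C_{12})^2=8e^2M^2/(C_{11}+3)\le n$ by assumption, and $\sqrt{(C_{11}+3)\gamma_*}\le C_{12}\sqrt{\gamma_*}$ — the second step is a good-event/bad-event split. On the event $E$ of probability at least $1-\kappa\exp(-t)$ on which $\max_{|\alpha|\le s}\sup_{u\in\mathcal{F}_{\gamma_*}}\|D^\alpha u-D^\alpha u^*\|_{L^2(\mathbf{x})}\le C_{12}\sqrt{\gamma_*}$, all the deterministic hypotheses of \autoref{one step upper bound} hold (the remaining ones, $(1/n)^2\le\gamma_*$ and $n\ge\max_\alpha\mathrm{PDim}(D^\alpha\mathcal{F})$, follow from $\gamma_*\ge\tfrac{16M^2t}{3n}$ and from the standing assumptions on $n$), so $\max_{|\alpha|\le s}\mathrm{Rad}(D^\alpha\widetilde{\mathcal{F}_{\gamma_*}},\mathbf{x})\lesssim\sqrt{\mathrm{VC}_{\mathcal{F}}\gamma_*\log n/n}$ there; on $E^c$ one uses the crude bound $\mathrm{Rad}(D^\alpha\widetilde{\mathcal{F}_{\gamma_*}},\mathbf{x})\le 2M$. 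Averaging over $\mathbf{x}$ gives
\[
\max_{|\alpha|\le s}\mathrm{Rad}(D^\alpha\widetilde{\mathcal{F}_{\gamma_*}})\ \lesssim\ \sqrt{\tfrac{\mathrm{VC}_{\mathcal{F}}}{n}\,\gamma_*\log n}\ +\ 2M\kappa\exp(-t).
\]

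The last step is to feed this back into $\gamma_*=\Psi(\gamma_*)$. If the maximum defining $\Psi(\gamma_*)$ is attained by $\tfrac{8C_{11}M^2t}{n}$ or $\tfrac{16M^2t}{3n}$ we are done immediately (with room to spare, matching the terms $\tfrac{16C_{11}M^2t}{n}$ and $\tfrac{32M^2t}{3n}$). Otherwise $\gamma_*\lesssim 12M\sqrt{\mathrm{VC}_{\mathcal{F}}\gamma_*\log n/n}+24M^2\kappa\exp(-t)$, an inequality of the form $\gamma_*\le a\sqrt{\gamma_*}+b$ whose resolution gives $\gamma_*\lesssim a^2+b\lesssim M^2\mathrm{VC}_{\mathcal{F}}\log n/n+M^2\kappa\exp(-t)$. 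Adding the four contributions reproduces the claimed bound, the absolute constants $144$, $48$, $16$, $32/3$ being those that emerge from this computation while $\lesssim$ absorbs the Dudley-chaining constants of \autoref{one step upper bound}. I expect the genuine difficulty to be the first step: verifying cleanly, from monotonicity alone, that $\gamma_*$ is a fixed point of $\Psi$ — so that the self-bounding hypothesis required to invoke \autoref{upper bound of the empirical norm} is literally the relation $\Psi(\gamma_*)=\gamma_*$ already in hand, rather than something one is still trying to prove — and making sure the probabilistic split does not smuggle the bad-event probability into the otherwise deterministic chaining estimate.
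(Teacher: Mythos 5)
Your proof is correct and follows essentially the same route as the paper: localize with \autoref{upper bound of the empirical norm}, feed the result into \autoref{one step upper bound} via a good/bad-event split, plug into the self-bounding relation at $\gamma_*$, and resolve the quadratic. The only cosmetic difference is that the paper sidesteps establishing $\gamma_*\geq\Psi(\gamma_*)$ by invoking \autoref{upper bound of the empirical norm} at $2\gamma_*$ (which lies in the defining set automatically), whereas you prove the full fixed-point identity $\Psi(\gamma_*)=\gamma_*$ from monotonicity and apply the lemma directly at $\gamma_*$; both yield $C_{12}=\sqrt{2(C_{11}+3)}$ and the same final constants.
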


\begin{proof}
    By definition of $\gamma_*$, for all $\varepsilon > 0$, there exists a $\gamma \in [\gamma_* - \varepsilon, \gamma_*]$, such that 
    \[
        \begin{aligned}
            \gamma &< \max_{\vert \alpha \vert \leq s}\bigg\{ 12M\mathrm{Rad}(D^\alpha \widetilde{\mathcal{F}_\gamma}), \frac{8C_{11}M^2t}{n} ,\frac{16M^2t}{3n} \bigg\} \\
            &\leq \max_{\vert \alpha \vert \leq s}\bigg\{ 12M\mathrm{Rad}(D^\alpha \widetilde{\mathcal{F}_{\gamma_*}}), \frac{8C_{11}M^2t}{n} ,\frac{16M^2t}{3n}\bigg\}.
        \end{aligned}
    \]
    Letting $\varepsilon \to 0$ yields that 
    \[
        \gamma_* \leq \max_{\vert \alpha \vert \leq s}\bigg\{ 12M\mathrm{Rad}(D^\alpha \widetilde{\mathcal{F}_{\gamma_*}}), \frac{8C_{11}M^2t}{n} ,\frac{16M^2t}{3n}\bigg\}.
    \]
    Then we define an event 
    \[
        E = \left\{ \max_{\vert \alpha \vert \leq s} \sup_{u \in \mathcal{F}_{2\gamma_*}} \|D^\alpha u - D^\alpha u^*\|_{L^2(\mathbf{x})} \leq \sqrt{2(C_{11} + 3)\gamma_*} \right\}.
    \]
    By \autoref{upper bound of the empirical norm}, $P(E) \geq 1 - \kappa\exp(-t)$ for all $t \geq 1$. 
    Notice that $\gamma_* \geq 16M^2t/(3n) \geq (1/n)^2$, by \autoref{one step upper bound}, 
    \[
        \begin{aligned}
            \gamma_* &\leq \max_{\vert \alpha \vert \leq s}\bigg\{ 12M\mathrm{Rad}(D^\alpha \widetilde{\mathcal{F}_{\gamma_*}}), \frac{8C_{11}M^2t}{n} ,\frac{16M^2t}{3n}\bigg\}  \\
            &\lesssim 12M\max_{\vert \alpha \vert \leq s}\mathbb{E}_P(\mathbf{1}_E \cdot \mathrm{Rad}(D^\alpha \widetilde{\mathcal{F}_{\gamma_*}}, \mathbf{x})) + 24M^2(1 -P(E)) + \frac{8C_{11}M^2t}{n} + \frac{16M^2t}{3n} \\
            &\lesssim 12M \sqrt{\frac{\mathrm{VC}_{\mathcal{F}}}{n}\gamma_*\log n} + 24\kappa M^2\exp(-t) + \frac{8C_{11}M^2t}{n} + \frac{16M^2t}{3n}.
        \end{aligned}
    \]
    We solve this quadratic inequality for $\gamma^*$ and yield that 
    \[
        \gamma_* \lesssim 144 M^2\frac{\mathrm{VC}_{\mathcal{F}}}{n}\log n + 48\kappa M^2\exp(-t) + \frac{16C_{11}M^2t}{n} + \frac{32M^2t}{3n}.
    \]
    We then complete the proof of this lemma.
\end{proof}
In the final part of this subsection, we utilize a peeling technique to demonstrate the oracle inequality \autoref{oracle inequality}.
\begin{proof}[Proof of \autoref{oracle inequality}]
    Define the critical radius $\gamma_*$ as \eqref{critical radius} with some $t' \geq 1$ to be specified. Considering $\gamma > \gamma_* \vee (\log n / n)$, its precise value will also be specified subsequently. 
    
    Recall that we have defined the localized class
    \[
        \mathcal{F}_\gamma:= \{u \in \mathcal{F}: \mathbb{E}_P(g_u(X)) = \mathcal{R}(u) - \mathcal{R}(u^*) \leq \gamma\}.
    \]
    We can partition $\mathcal{F}$ into shells without intersection 
    \[
        \mathcal{F} = \mathcal{F}_\gamma \cup (\mathcal{F}_{2\gamma} \backslash \mathcal{F}_\gamma) \cup \cdots \cup (\mathcal{F}_{2^l\gamma} \backslash \mathcal{F}_{2^{l-1}\gamma}),
    \]
    where $l \leq \log_2(C_{10}M / \gamma) \leq \log_2(C_{10}M n/ \log n)$ and $C_{10}$ is from \autoref{contraction lemma}. 
        
    Assume that for some $j \leq l, u_n \in \mathcal{F}_{2^j \gamma}$. Notice that $2^j \gamma > \gamma_*$ satisfies the condition of \autoref{upper bound of the empirical norm}, with probability at least $1 - \kappa\exp(-t')$, there holds
    \[
        \max_{\vert \alpha \vert \leq s} \|D^\alpha u_n - D^\alpha u^*\|_{L^2(\mathbf{x})} \leq \max_{\vert \alpha \vert \leq s}\sup_{u \in \mathcal{F}_{2^j \gamma}}\|D^\alpha u - D^\alpha u^*\|_{L^2(\mathbf{x})} \leq \sqrt{2^j(C_{11} + 3) \gamma},
    \] 
    where $\kappa := \# \{\alpha: \vert \alpha \vert \leq s, a_\alpha (\cdot) \neq 0\}$ to represent the number of non-zero terms in $\mathcal{L}$ satisfying \autoref{Assumption for PDE}.
                
    From \autoref{one step upper bound}, with probability at least $1 - (\kappa + 2)\exp(-t')$ we obtain 
    \[
        \mathbb{E}_P g_{u_n} - \mathbb{E}_{\mathbf{x}}g_{u_n} \lesssim 6C_{10}\kappa\sqrt{\frac{\mathrm{VC}_{\mathcal{F}}}{n} 2^j \gamma \log n} + \sqrt{\frac{2C_{10}M2^j \gamma t'}{n}} + \frac{23C_{10}Mt'}{3n}.
    \]
    Using \autoref{Bernstein's bound}, with probability at least $1 - (\kappa + 3)\exp(-t')$, there holds 
    \[
        \begin{aligned}
            &\mathbb{E}_P g_{u_n} \\
            \lesssim{} &\mathbb{E}_{\mathbf{x}}g_{u_n} + 6C_{10}\kappa\sqrt{\frac{\mathrm{VC}_{\mathcal{F}}}{n} 2^j \gamma \log n} + \sqrt{\frac{2C_{10}M2^j \gamma t'}{n}} + \frac{23C_{10}Mt'}{3n} \\
            \leq{}& \mathbb{E}_{\mathbf{x}}g_{u_\mathcal{F}} - \mathbb{E}_{P}g_{u_\mathcal{F}} + \mathbb{E}_{P}g_{u_\mathcal{F}} + 6C_{10}\kappa\sqrt{\frac{\mathrm{VC}_{\mathcal{F}}}{n} 2^j \gamma \log n} + \sqrt{\frac{2C_{10}M2^j \gamma t'}{n}} + \frac{23C_{10}Mt'}{3n} \\
            \leq{}& 2\mathbb{E}_{P}g_{u_\mathcal{F}} + 6C_{10}\kappa\sqrt{\frac{\mathrm{VC}_{\mathcal{F}}}{n} 2^j \gamma \log n} + \sqrt{\frac{2C_{10}M2^j \gamma t'}{n}} + \frac{23C_{10}Mt'}{3n} + \frac{7C_0t'}{6n},
        \end{aligned}
    \]
    where $C_0$ is the constant from \autoref{Bernstein's bound}.
                
    Assume that we have chosen a suitable $\gamma$ such that  
    \begin{equation}\label{chosen a suitable gamma}
        2\mathbb{E}_{P}g_{u_\mathcal{F}} + 6C_{10}\kappa\sqrt{\frac{\mathrm{VC}_{\mathcal{F}}}{n} 2^j \gamma \log n} + \sqrt{\frac{2C_{10}M2^j \gamma t'}{n}} + \frac{23C_{10}Mt'}{3n} + \frac{7C_0t'}{6n} \leq 2^{j-1} \gamma.
    \end{equation}
    Then with probability at least $1 - (\kappa + 3)\exp(-t')$, we have $u_n \in \mathcal{F}_{2^{j-1} \gamma}$.
    We continue this process with a shell-by-shell argument and conclude that, with probability at least $1 - l(\kappa + 3)\exp(-t')$, there holds $u_n \in \mathcal{F}_\gamma.$
    Now we choose a suitable $\gamma$.  The selected $\gamma$ must satisfy \eqref{chosen a suitable gamma} for all $1 \leq j \leq l$ and 
    \[
        \gamma_* \vee \frac{\log n}{n} \leq \gamma.
    \]
    \eqref{chosen a suitable gamma} can be ensured for all $1 \leq j \leq l$ if 
    \[
        2\mathbb{E}_{P}g_{u_\mathcal{F}} + \frac{23C_{10}Mt'}{3n} + \frac{7C_0t'}{6n} \leq 2^{j-2} \gamma, \quad \forall 1 \leq j \leq l,
    \]
    and
    \[
        6C_{10}\kappa\sqrt{\frac{\mathrm{VC}_{\mathcal{F}}}{n} 2^j \gamma \log n} + \sqrt{\frac{2C_{10}M2^j \gamma t'}{n}} \leq 2^{j-2} \gamma, \quad \forall 1 \leq j \leq l.
    \]
    Hence, it is sufficient to choose 
    \[
        \gamma =  1152C_{10}^2\kappa^2\frac{\mathrm{VC}_{\mathcal{F}}}{n} \log n + \frac{64C_{10}Mt'}{n} + 4\mathbb{E}_{P}g_{u_\mathcal{F}} + \frac{46C_{10}Mt'}{3n} + \frac{7C_0t'}{3n} + \frac{\log n}{n} + \gamma_*.
    \]
    Now we choose $t' = t + \log ((\kappa + 3)\log_2(C_{10}M n/ \log n)) \geq 1$, with probability at least $1 - l(\kappa + 3)\exp(-t') \geq 1 - \exp(-t)$, we have $u_n \in \mathcal{F}_\gamma$ and by \autoref{upper bound of critical radius},
    \[
        \begin{aligned}
            &\mathcal{R}(u_n) - \mathcal{R}(u^*) \\
            \leq{}& 1152C_{10}^2\kappa^2\frac{\mathrm{VC}_{\mathcal{F}}}{n} \log n + \frac{64C_{10}Mt'}{n} + 4\mathbb{E}_{P}g_{u_\mathcal{F}} + \frac{46C_{10}Mt'}{3n} + \frac{7C_0t'}{3n} + \frac{\log n}{n} + \gamma_* \\
            \lesssim{}& 1152C_{10}^2\kappa^2\frac{\mathrm{VC}_{\mathcal{F}}}{n} \log n +  144M^2C^2 \frac{\mathrm{VC}_{\mathcal{F}}}{n}\log n + 48\kappa M^2\exp(-t') + 4\mathbb{E}_{P}g_{u_\mathcal{F}} \\
            & + \frac{7C_0t' + 238C_{10}Mt' + 48C_{11}M^2t' + 32M^2t'}{3n} + \frac{\log n}{n} \\
            \lesssim{}& \mathbb{E}_{P}g_{u_\mathcal{F}} + \frac{\mathrm{VC}_{\mathcal{F}}}{n} \log n + \frac{t}{n} + \frac{\exp(-t)}{\log(n / \log n)}.
        \end{aligned}
    \]
    Thus we complete the proof.
\end{proof}



\bibliographystyle{plain}
\bibliography{references.bib}
\end{document}